\documentclass[11pt]{article}

\flushbottom \setlength{\parindent}{16pt}
\setlength{\parskip}{0cm} \setlength{\textheight}{195mm}
\setlength{\textwidth}{138mm}

\makeatletter

\topmargin      0mm \textwidth    160mm \textheight   195mm
\oddsidemargin  0mm
\evensidemargin 0mm
\usepackage{amssymb,amsmath,amsthm,latexsym}
\usepackage[mathscr]{eucal} \usepackage{mathrsfs}
\usepackage{color}
\usepackage{cases}
\usepackage{hyperref}
\usepackage{comment}
\usepackage{mathtools}
\let\wfs@comment@comment\comment
\let\comment\@undefined

\usepackage{changes}
\let\wfs@changes@comment\comment
\let\comment\@undefined

\newcommand\comment{%
    \ifthenelse{\equal{\@currenvir}{comment}}
    {\wfs@comment@comment}
    {\wfs@changes@comment}%
}
\definechangesauthor[name=Daniele, color=red]{DAN}
\definechangesauthor[name=Giuseppe, color=green]{GIUS}
%\linespread{1.6}
\title{Saturating linear sets of minimal rank}
\usepackage{authblk}
\author[1]{Daniele Bartoli}
\affil[1]{Department of Mathematics and Informatics, University of Perugia, Perugia,  Italy, {\small \texttt{daniele.bartoli@unipg.it}}\vspace*{.3cm}}
\author[2]{Martino Borello}
\affil[2]{Universit\'e Paris 8, Laboratoire de G\'eom\'etrie, Analyse et Applications, LAGA,
Universit\'e Sorbonne Paris Nord, CNRS, UMR 7539, France, {\small\texttt{martino.borello@univ-paris8.fr}}\vspace*{.3cm}}
\author[3]{Giuseppe Marino}
\affil[3]{Department of Mathematics and Applications ``R. Caccioppoli'', University of Naples Federico II, Napoli, Italy, {\small\texttt{giuseppe.marino@unina.it}}\vspace*{.3cm}}
\date{}

\DeclareMathOperator{\rk}{rk}

\begin{document}
\maketitle

%2. Format for theorems and similar.
\newtheorem{theorem}{Theorem}[section]
\newtheorem{lemma}[theorem]{Lemma}
\newtheorem{conj}[theorem]{Conjecture}
\newtheorem{remark}[theorem]{Remark}
\newtheorem{corollary}[theorem]{Corollary}
\newtheorem{prop}[theorem]{Proposition}
\newtheorem{definition}[theorem]{Definition}
\newtheorem{result}[theorem]{Result}
\newtheorem{property}[theorem]{Property}

\newtheorem{question}[theorem]{Question}

\makeatother
\newcommand{\Prf}{\noindent{\bf Proof}.\quad }
\renewcommand{\labelenumi}{(\alph{enumi})}

\def\B{\mathbf B}
\def\C{\mathbf C}
\def\Q{\mathbf Q}
\def\W{\mathbf W}
\def\a{\mathbf a}
\def\b{\mathbf b}
\def\c{\mathbf c}
\def\d{\mathbf d}
\def\e{\mathbf e}
\def\l{\mathbf l}
\def\v{\mathbf v}
\def\w{\mathbf w}
\def\x{\mathbf x}
\def\y{\mathbf y}
\def\z{\mathbf z}
\def\t{\mathbf t}
\def\cD{\mathcal D}
\def\cC{\mathcal C}
\def\cV{\mathcal V}
\def\cH{\mathcal H}
\def\cM{{\mathcal M}}
\def\cK{\mathcal K}
\def\cU{U}
\def\cS{\mathcal S}
\def\cT{\mathcal T}
\def\cR{\mathcal R}
\def\cN{\mathcal N}
\def\cA{\mathcal A}
\def\cF{\mathcal F}
\def\cL{\mathcal L}
\def\cB{\mathcal B}
\def\cP{\mathcal P}
\def\cG{\mathcal G}
\def\cGD{\mathcal GD}
\def\mC{\mathcal C}
\def\mU{U}
\def\mH{\mathcal H}
\def\PG{{\rm PG}}
\def\GF{{\rm GF}}
\def\Tr{{\rm Tr}}
\def\Pg{PG(5,q)}
\def\pg{PG(3,q^2)}
\def\ppg{PG(3,q)}
\def\HH{{\cal H}(2,q^2)}
\def\F{\mathbb F}
\def\Ft{\mathbb F_{q^t}}
\def\P{\mathbb P}
\def\Z{\mathbb Z}
\def\V{\mathbb V}
\def\bS{\mathbb S}
\def\G{\mathbb G}
\def\E{\mathbb E}
\def\N{\mathbb N}
\def\K{\mathbb K}
\def\D{\mathbb D}
\def\ps@headings{
 \def\@oddhead{\footnotesize\rm\hfill\runningheadodd\hfill\thepage}
 \def\@evenhead{\footnotesize\rm\thepage\hfill\runningheadeven\hfill}
 \def\@oddfoot{}
 \def\@evenfoot{\@oddfoot}
}
\def\cub{\mathscr C}
\def\cO{\mathcal O}
\def\cur{\mathscr L}
\def\Fqm{{\mathbb F}_{q^m}}
\def\Fq3{{\mathbb F}_{q^3}}
\def\fq{{\mathbb F}_{q}}
\def\Fm{{\mathbb F}_{q^m}}
\def\Fn{{\mathbb F}_{q^n}}

\newcommand{\Fmk}{[n,k]_{q^m/q}}
\newcommand{\Fmkd}{[n,k,d]_{q^m/q}}
\newcommand{\Fmkdd}{[n,k,(d_1,\ldots,d_k)]_{q^m/q}}
\newcommand{\ale}[1]{{\color{blue}[$\star \star$ {\sf Alessandro: #1}]}}
\newcommand{\giu}[1]{{\color{red}[$\star \star$ {\sf Giuseppe: #1}]}}
\newcommand{\Fms}[3]{[#1,#2]_{q^{#3}/q}}

\begin{abstract}
Saturating sets are combinatorial objects in projective spaces over finite fields that have been intensively investigated in the last three decades. They are related to the so-called covering problem of codes in the Hamming metric. In this paper, we consider the recently introduced linear version of such sets, which is, in turn, related to the covering problem in the rank metric. The main questions in this context are how small the rank of a saturating linear set can be and how to construct saturating linear sets of small rank. Recently, Bonini, Borello, and  Byrne provided a lower bound on the rank of saturating linear sets in a given projective space, which is shown to be tight in some cases. In this paper, we provide construction of saturating linear sets meeting the lower bound and we develop a link between the saturating property and the scatteredness of linear sets. The last part of the paper is devoted to show some parameters for which the bound is not tight.\\

\textbf{Keywords}: Linear sets, saturating sets, rank-metric codes, covering radius.

\textbf{MSC2020}. Primary:  05B40, 51E20, 52C17.  Secondary: 11T71, 94B75.
\end{abstract}

\bigskip

\par\noindent

\section*{Introduction}

A set $S\subseteq \PG(k-1,q^m)$ is called \emph{$\rho$-saturating} if every point of $\PG(k-1,q^m)$ lies in a subspace generated by $\rho+1$ points of $S$. The term saturated was coined by \cite{ughi1987saturated}, but used with a slightly different meaning. The above definition comes from \cite{pambianco1996small} and it has been consolidated in \cite{davydov2000saturating}. There is a classical correspondence between $\rho$-saturating sets and linear codes with covering radius $\rho+1$, obtained by considering the vector representatives of $S$ as the columns of a parity check matrix of a linear code. It is natural to wonder how small a  $\rho$-saturating set in $\PG(k-1,q^m)$ can be. This is equivalent, in the coding-theoretical language, to the so-called \emph{covering problem} (see \cite[Chapter 1]{cohen1997covering}). Many bounds and construction of small saturating sets has been given over the last two decades (see for example \cite{davydov2003saturating,denaux2022constructing,nagy2018saturating} and references therein). 

If $m\geq 2$, instead of simply considering sets of points, we may look at linear sets: a linear set associated to an $\F_q$-vector space $U$ in $\F_{q^m}^k$ is the set of points 
\[L_\mU:=\{\langle u \rangle_{\F_{q^m}} :  u\in \mU\setminus\{0\}\}\subseteq \PG(k-1, q^m),\]
where $\langle u \rangle_{\F_{q^m}}$ denotes the projective point corresponding to $u$.
Such objects were introduced in \cite{lunardon1999normal} in order to construct blocking sets and they have been the subject of intense research over the last years. The $\F_q$-dimension of $U$ is also called \emph{rank} of the linear set. Intuitively, saturating linear sets would not be the smallest ones, in fact, they would be quite large in cardinality. However, a natural question is how small the rank of a $\rho$-saturating linear set in $\PG(k-1, q^m)$ can be. As shown in \cite{bonini2022saturating}, this is equivalent to the covering problem for rank-metric codes. Let us underline that the knowledge of the covering properties of a rank-metric code has important consequences for applications: the \emph{covering radius} is the least integer $\rho$ such that every element of the ambient space is in a ball of radius $\rho$ centered in some codeword. It measures the maximum weight of any correctable error in the ambient space and it characterizes the maximality of the code (namely, if the code is contained in another with the same minimum distance). See \cite{byrne2017covering} for more details. In general, it is much harder to compute the covering radius than the minimum distance of a code. While there is a wide literature on the covering problem in the Hamming metric, there are relatively few papers on the subject in the rank-metric case \cite{byrne2017covering,byrne2019assmus,gadouleau2008packing,gadouleau2009bounds}. 

In the current work, we continue the investigation of the geometrical approach to such problem introduced in \cite{bonini2022saturating}. Let $s_{q^m/q}(k, \rho)$ denotes the smallest rank of a $(\rho-1)$-saturating linear set in $\PG(k-1, q^m)$. In \cite{bonini2022saturating}, it is proved that 
\begin{equation}\label{eq:lb}
s_{q^m/q}(k, \rho) \geq \begin{dcases}
           \left\lceil \frac{mk}{\rho}\right\rceil - m + \rho & \text{ if } q>2,\\
           \left\lceil \frac{mk-1}{\rho}\right\rceil - m + \rho & \text{ if } q=2, \rho> 1,\\
           m(k-1)+1 & \text{ if } q=2,\rho =1.
\end{dcases}
\end{equation}
%\begin{equation}\label{eq:lb} s_{q^m/q}(k, \rho)\geq \left\lceil \frac{m}{\rho}(k-\rho)\right\rceil+\rho
%\end{equation}
and it is shown that \eqref{eq:lb} is tight for some values of $q,m,k$ and $\rho$. In this paper, we first extend the results in \cite{bonini2022saturating} by proving that \eqref{eq:lb} is tight whenever $\rho$ divides $k$. Secondly, we show some covering properties of $h$-scattered linear sets of large rank, which are particular linear sets introduced first in \cite{csajbok2021generalising}. As a byproduct, thanks to known results on the existence of maximum $h$-scattered linear sets, we get other saturating linear sets of small rank and, in some cases not covered by previous results, we get the tightness of the bound \eqref{eq:lb}. The last part of the paper is devoted to the proof that there exist values of $q,m,k$ and $\rho$ for which the bound \eqref{eq:lb} is not met, clarifying then that the bound \eqref{eq:lb} is not always tight. 

\bigskip

\textbf{Outline:} Section \ref{sec:preliminaries} is devoted to define and give preliminary results about the main objects of the paper. In Section \ref{sec:smallsaturating} we provide constructions of small saturating linear sets showing the tightness of the lower bound \eqref{eq:lb} for some parameters, whereas in Section \ref{sec:non-tightness} we prove that the bound is not always tight. Finally, in Section \ref{sec:conclusion} we resume our results and present some open problems.

\section{Preliminaries}\label{sec:preliminaries}

Throughout this paper, $q$ is a prime power, $[t]$ denotes the set $\{1,2,\ldots,t\}\subseteq \Z$, $k\in [n]$, $m\geq 2$, $\rho\in[\min\{k,m\}]$, $\F_q$ denotes the finite field of order $q$ and $V(k,q^m)$ is a vector space of dimension $k$ over $\F_{q^m}$. Also, ${\rm N}_{q^m/q}(z)$ and ${\rm Tr}_{q^m/q}(z)$ denote the (standard) norm and trace  from $\mathbb{F}_{q^m}$ to $\mathbb{F}_q$ of $z\in \mathbb{F}_{q^m}$.

The projective geometry $\PG(k-1,q)$ with underlying vector space $\F_q^k$ is 
$$ \PG(k-1,q):= \left(\F_q^{k}\setminus \{0\}\right)/_\sim, $$
where, for $u,v\in \F_q^{k}\setminus \{0\}$, $u\sim v$ if and only if $u=\lambda v$
for some $\lambda \in \F_q\setminus \{0\}$.

\begin{definition}
Let $\cS\subseteq \mathrm{PG}(k-1,q^m)$.
\begin{enumerate}
    \item[{\rm (a)}] A point $Q \in \mathrm{PG}(k - 1,q^m)$ is said to be $(\rho-1)$-saturated by $\cS$ if there exist $\rho$ points $P_1,\ldots,P_{\rho}\in \cS$ such that $Q \in \langle P_1,\ldots,P_{\rho}\rangle_{\mathbb{F}_{q^m}}$. We also say that $\cS$ $\rho$-covers $Q$.
    \item [{\rm (b)}] The set $\cS$ is $(\rho-1)$-saturating set of $\mathrm{PG}(k - 1, q^m)$ if every point $Q \in \mathrm{PG}(k - 1, q^m)$ is $(\rho-1)$-saturated by $\cS$ and $\rho$ is the smallest value with this property.
\end{enumerate}
\end{definition}

In order to introduce the $q$-analogue of the above definition, we need to introduce the notion of linear sets. These are combinatorial  objects, introduced by Lunardon in \cite{lunardon1999normal}, which are subject of intense research over the last two decades. A thorough presentation of linear sets can be found in  \cite{polverino2010linear}. 

\begin{definition}
Let $\mU$ be an $\F_q$-subspace of $V(k,q^m)$ of $\F_q$-dimension $n$. The $\F_q$-linear set in $\PG(k-1, q^m)$ of rank $n$ associated to $\mU$ is the set $$L_\mU:=\{\langle u \rangle_{\F_{q^m}} :  u\in \mU\setminus\{0\}\},$$
where $\langle u \rangle_{\F_{q^m}}$ denotes the projective point corresponding to $u$. If the size of $L_\mU$ is maximal, i.e.  $|L_\mU|=\frac{q^n-1}{q-1}$, then $L_\mU$ is called scattered.
\end{definition}

\begin{definition}
    Let $\Lambda = \PG(T,q^m)$ be a projective subspace of $\PG(k-1,q^m)$ with underlying vector space $T \subseteq V(k,q^m)$. Then $L_{\mU \cap T}=L_\mU \cap \Lambda,$ and the \textit{weight} of $\Lambda$ in $L_\mU$ is defined as
\[w_{L_\mU}
(\Lambda) = \dim_{\mathbb{F}_q}(\cU \cap T). \]
If $\text{dim}_{\mathbb{F}_q}(\mU \cap T)=i$, one shall say that $\Lambda$ has {\it weight} $i$ in $L_\mU$.
\end{definition}

We are now ready to define the main object of the paper, introduced first in \cite{bonini2022saturating}.

\begin{definition}
An $\F_q$-subspace $\mU$ in $V(k,q^m)$ is a rank $\rho$-saturating system if $L_\mU$ is a $(\rho - 1)$-saturating set in $\mathrm{PG}(k - 1, q^m)$. This last is called a $(\rho - 1)$-saturating linear set.
\end{definition}

For an $\F_q$-subspace $\mU$ in $V(k,q^m)$ of $\F_q$-dimension $n$, let us consider a $k\times n$ matrix $G$ whose columns are the elements of an $\F_q$-basis of $\mU$. The $\F_{q^m}$-vector space $\mC$ generated by its rows is called a code associated to $\mU$. The dual code $\mC^\perp$ is the orthogonal space with respect to the standard inner product.

\begin{definition}
The rank covering radius of a code $\mC \leq \F_{q^m}^n$ is the integer
$$\rho_{\rk}(\mC):= \max \{ \min \{ \dim_{\F_q}\langle x_1-c_1, x_2-c_2,\ldots, x_n-c_n\rangle_{\F_q} : c \in \mC\} : x \in \F_{q^m}^n\}.$$
\end{definition}

The following relation holds between the rank covering radius and rank saturating systems.

\begin{theorem}[{\cite[Theorem 2.5]{bonini2022saturating}}]
Let $\cU$ be an $\F_q$-subspace in $V(k,q^m)$ and $\cC$ a code associated to $\cU$. Then, $\mU$ is a rank $\rho$-saturating system if and only if $\rho_{\rk}(\cC^\perp)=\rho$.
\end{theorem}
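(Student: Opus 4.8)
The plan is to build a dictionary between the cosets of $\cC^\perp$ in $\F_{q^m}^n$ and the vectors of $V(k,q^m)$, under which the rank distance from a coset to $\cC^\perp$ becomes exactly the least number of points of $L_\cU$ whose $\F_{q^m}$-span contains the corresponding projective point. We assume, as is customary, that $\langle\cU\rangle_{\F_{q^m}}=V(k,q^m)$ (otherwise $L_\cU$ lies in a proper subspace and is saturating for no value of $\rho$); equivalently the $k\times n$ matrix $G$ has rank $k$. Let $g_1,\dots,g_n\in V(k,q^m)$ be its columns, so that $\{g_1,\dots,g_n\}$ is an $\F_q$-basis of $\cU$, and consider the $\F_{q^m}$-linear surjection
\[
\sigma\colon \F_{q^m}^n\longrightarrow V(k,q^m),\qquad \sigma(z)=\sum_{j=1}^n z_j\,g_j .
\]
A direct check gives $\ker\sigma=\cC^\perp$, so $\sigma$ induces an isomorphism $\F_{q^m}^n/\cC^\perp\cong V(k,q^m)$ sending the coset of $z$ to $v=\sigma(z)$; since $z-c$ runs over $z+\cC^\perp=\sigma^{-1}(v)$ as $c$ runs over $\cC^\perp$, the quantity $\min_{c\in\cC^\perp}\dim_{\F_q}\langle z_1-c_1,\dots,z_n-c_n\rangle_{\F_q}$ is the minimum of $\dim_{\F_q}\langle w_1,\dots,w_n\rangle_{\F_q}$ over all $w$ with $\sigma(w)=v$.

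The crucial step, and the only one I expect to require care, is the rank identity: for every $v\in V(k,q^m)$,
\[
\min\bigl\{\dim_{\F_q}\langle w_1,\dots,w_n\rangle_{\F_q}\;:\;\sigma(w)=v\bigr\}\;=\;\min\bigl\{\,t\;:\;v\in\langle u_1,\dots,u_t\rangle_{\F_{q^m}},\ u_1,\dots,u_t\in\cU\,\bigr\}\;=:\;t_\cU(v).
\]
For ``$\le$'', if $v=\sum_{i=1}^{t}\lambda_i u_i$ with $\lambda_i\in\F_{q^m}$ and $u_i=\sum_j a_{ij}g_j$, $a_{ij}\in\F_q$, then $w_j:=\sum_i\lambda_i a_{ij}$ gives $\sigma(w)=v$ with all $w_j\in\langle\lambda_1,\dots,\lambda_t\rangle_{\F_q}$, so the left-hand minimum is at most $t$. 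For ``$\ge$'', given $w$ with $\sigma(w)=v$, pick an $\F_q$-basis $\mu_1,\dots,\mu_t$ of $\langle w_1,\dots,w_n\rangle_{\F_q}$, write $w_j=\sum_i b_{ij}\mu_i$ with $b_{ij}\in\F_q$, and set $u_i:=\sum_j b_{ij}g_j\in\cU$; then $v=\sum_i\mu_i u_i$, whence $t_\cU(v)\le t$. Together with the first paragraph this gives $\rho_{\rk}(\cC^\perp)=\max_{v\in V(k,q^m)}t_\cU(v)=\max_{v\ne 0}t_\cU(v)$, the last equality because $t_\cU(0)=0$.

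It then remains to read off the saturating condition. A projective point $\langle v\rangle_{\F_{q^m}}$ is $(\rho-1)$-saturated by $L_\cU$ precisely when $v$ lies in the $\F_{q^m}$-span of $\rho$ points of $L_\cU$, i.e.\ of $\rho$ elements of $\cU$ (repeating a point if fewer than $\rho$ already suffice), i.e.\ when $t_\cU(v)\le\rho$. Hence every point of $\PG(k-1,q^m)$ is $(\rho-1)$-saturated iff $t_\cU(v)\le\rho$ for all $v\ne 0$, that is iff $\rho_{\rk}(\cC^\perp)\le\rho$. Since a set with this property also has it for every larger value, $\rho$ is the \emph{smallest} value for which it holds precisely when some point fails to be $(\rho-2)$-saturated, i.e.\ precisely when $\rho_{\rk}(\cC^\perp)\ge\rho$. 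Combining the two, $\cU$ is a rank $\rho$-saturating system if and only if $\rho_{\rk}(\cC^\perp)=\rho$, as claimed.
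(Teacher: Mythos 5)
Your proof is correct: the dictionary $\sigma(z)=\sum_j z_j g_j$ with $\ker\sigma=\cC^\perp$, the two-sided rank identity between coset representatives of minimal $\F_q$-rank and minimal numbers of points of $L_\cU$ spanning a given point, and the final reading-off of minimality are all sound, and this is essentially the standard argument for this statement (the paper itself only cites it from \cite{bonini2022saturating} without reproducing a proof). Your explicit remark that one must assume $\langle\cU\rangle_{\F_{q^m}}=V(k,q^m)$ is appropriate, since this spanning hypothesis is part of the definition of a system in the cited reference and is needed for $\sigma$ to be surjective.
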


From both a geometric and a coding theoretical point of view, it is meaningful to ask how small the $\F_q$-dimension of a rank $\rho$-saturating system in $V(k,q^m)$ can be. Hence, let us introduce the following notation. 

\begin{definition}[\!\! \cite{bonini2022saturating}]
For fixed $\rho, q,k,m$,  $s_{q^m/q}(k, \rho)$ denotes the minimal $\mathbb{F}_q$-dimension of a rank $\rho$-saturating system in $V(k,q^m)$, or, equivalently, smallest rank of a $(\rho-1)$-saturating linear set in $\PG(k-1, q^m)$.
\end{definition}

In \cite{bonini2022saturating}, the following bounds are proved.

\begin{theorem}[{\cite[Theorems 3.3 and 3.4]{bonini2022saturating}}]
The function $s_{q^m/q}(k, \rho)$ satisfies the bounds
\begin{align}
    s_{q^m/q}(k, \rho) & \geq \begin{dcases}
           \left\lceil \frac{mk}{\rho}\right\rceil - m + \rho & \text{ if } q>2,\\
           \left\lceil \frac{mk-1}{\rho}\right\rceil - m + \rho & \text{ if } q=2, \rho> 1,\\
           m(k-1)+1 & \text{ if } q=2,\rho =1.
\end{dcases}\\ \medskip
s_{q^m/q}(k, \rho) & \leq m(k-\rho)+\rho.
\end{align}

\end{theorem}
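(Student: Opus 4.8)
The statement has two halves. The lower bound is the one already displayed as \eqref{eq:lb}; the upper bound is $s_{q^m/q}(k,\rho)\le m(k-\rho)+\rho$. I would prove the upper bound by exhibiting one explicit rank $\rho$-saturating system of the claimed $\F_q$-dimension, and recover the lower bound from a rank-metric sphere-covering count applied to $\cC^\perp$.

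\textbf{The construction.} Write $V(k,q^m)=\F_{q^m}^k$ and take the ``partly rational'' subspace
\[
\mU \;=\; \F_{q^m}^{k-\rho}\times \F_q^{\rho}\;=\;\bigl\{(x_1,\dots,x_{k-\rho},y_1,\dots,y_\rho)\ :\ x_i\in\F_{q^m},\ y_j\in\F_q\bigr\},
\]
so that $\dim_{\F_q}\mU=m(k-\rho)+\rho$. Its linear set $L_\mU$ contains every point of $\Sigma:=\langle e_1,\dots,e_{k-\rho}\rangle_{\F_{q^m}}$ (those vectors already lie in $\mU$) and the $\rho$ points $\langle e_{k-\rho+1}\rangle,\dots,\langle e_{k}\rangle$. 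To see that $L_\mU$ is $(\rho-1)$-saturating, let $Q=\langle(v,w)\rangle$ with $v\in\F_{q^m}^{k-\rho}$ and $w=(w^{(1)},\dots,w^{(\rho)})\in\F_{q^m}^{\rho}$ not both zero. If $w=0$ then $Q\in\Sigma\subseteq L_\mU$. If $w\ne0$, fix $i_0$ with $w^{(i_0)}\ne0$, choose $v_i\in\F_{q^m}^{k-\rho}$ arbitrarily for $i\ne i_0$, and set $v_{i_0}=(w^{(i_0)})^{-1}\bigl(v-\sum_{i\ne i_0}w^{(i)}v_i\bigr)$; then the $\rho$ points $P_i:=\langle(v_i,e_i)\rangle$ lie in $L_\mU$ and $\sum_{i=1}^{\rho}w^{(i)}(v_i,e_i)=(v,w)$, so $Q\in\langle P_1,\dots,P_\rho\rangle$.

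\textbf{Minimality of $\rho$.} It remains to check $\mU$ is a rank \emph{$\rho$}-saturating system, i.e.\ that $L_\mU$ is not $(\rho-2)$-saturating. Since $m\ge\rho$, pick $w\in\F_{q^m}^{\rho}$ whose coordinates are $\F_q$-linearly independent and set $Q=\langle(0,w)\rangle$. If $Q\in\langle P_1,\dots,P_{\rho-1}\rangle$ with $P_j=\langle(v_j,w_j)\rangle\in L_\mU$, $w_j\in\F_q^{\rho}$, then $w\in\langle w_1,\dots,w_{\rho-1}\rangle_{\F_{q^m}}$; but $W_0:=\langle w_1,\dots,w_{\rho-1}\rangle_{\F_q}$ is a proper $\F_q$-subspace of $\F_q^{\rho}$, so there is a nonzero $c\in\F_q^{\rho}$ orthogonal to $W_0$, hence — since $c$ has entries in $\F_q$ — also orthogonal to $\langle W_0\rangle_{\F_{q^m}}\ni w$, which gives $\sum_i c^{(i)}w^{(i)}=0$ with $c^{(i)}\in\F_q$ not all zero, contradicting the choice of $w$. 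Therefore $\mU$ is a rank $\rho$-saturating system and $s_{q^m/q}(k,\rho)\le\dim_{\F_q}\mU=m(k-\rho)+\rho$.

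\textbf{Lower bound and the main obstacle.} For \eqref{eq:lb}, let $\mU$ realize $s_{q^m/q}(k,\rho)=n$; since $L_\mU$ must span $\PG(k-1,q^m)$ we may take $\langle\mU\rangle_{\F_{q^m}}=V(k,q^m)$, so an associated code $\cC$ is $[n,k]_{q^m/q}$ and $\cC^\perp$ is $[n,n-k]_{q^m/q}$, and by \cite[Theorem~2.5]{bonini2022saturating} one has $\rho_{\rk}(\cC^\perp)=\rho$. Thus the $q^{m(n-k)}$ rank-$\rho$ balls centred at the codewords of $\cC^\perp$ cover $\F_{q^m}^n$; as the number of vectors of $\F_{q^m}^n$ of rank at most $\rho$ equals the number of $m\times n$ matrices over $\F_q$ of rank at most $\rho$, this gives the sphere-covering inequality
\[
V_\rho(n,q^m)\;:=\;\sum_{i=0}^{\rho}{\genfrac{[}{]}{0pt}{}{n}{i}}_{q}\;\prod_{j=0}^{i-1}(q^m-q^j)\;\ge\;q^{mk}.
\]
The term $i=\rho$ dominates and has size $q^{\rho(n+m-\rho)}$ up to a factor bounded in terms of $q$ and $\rho$, which (using that $n+m-\rho\in\Z$) pushes $n$ up to $\lceil mk/\rho\rceil-m+\rho$. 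The real work — and the main obstacle — is turning this asymptotic statement into the exact one with ceilings: one must estimate ${\genfrac{[}{]}{0pt}{}{n}{\rho}}_q$ and $\prod_{j=0}^{\rho-1}(q^m-q^j)$ sharply enough to show that substituting $n=\lceil mk/\rho\rceil-m+\rho-1$ into $V_\rho(n,q^m)$ already makes it strictly less than $q^{mk}$. It is exactly the tightness of these margins that fails for $q=2$ and forces the weaker $\lceil(mk-1)/\rho\rceil$ (and, for $\rho=1$, the $m(k-1)+1$) form of the bound.
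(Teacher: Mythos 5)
Your upper-bound half is correct and complete: $\mU=\F_{q^m}^{k-\rho}\times\F_q^{\rho}$ has $\F_q$-dimension $m(k-\rho)+\rho$, your explicit choice of the points $P_i=\langle(v_i,e_i)\rangle$ shows every point is $(\rho-1)$-saturated, and your minimality check (a point $\langle(0,w)\rangle$ with $\F_q$-independent coordinates of $w$, killed by a rational orthogonal vector $c\in\F_q^{\rho}$) correctly rules out $(\rho-2)$-saturation, so $\mU$ really is a rank $\rho$-saturating system. Note that in this paper the theorem is only quoted from \cite{bonini2022saturating}, so there is no internal proof to compare with; your construction is in the same direct spirit as the cited source's upper bound.

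The lower-bound half, however, contains a genuine gap which you yourself flag: after setting up the sphere-covering inequality $\sum_{i=0}^{\rho}\binom{n}{i}_{q}\prod_{j=0}^{i-1}(q^m-q^j)\geq q^{mk}$ (which is indeed equivalent to the geometric count used in \cite{bonini2022saturating}, namely that the points $\rho$-covered by $L_\mU$ number at most $\binom{n}{\rho}_{q}\,\frac{q^{m\rho}-1}{q^m-1}$), you only assert that the $i=\rho$ term ``pushes $n$ up to'' $\lceil mk/\rho\rceil-m+\rho$ and explicitly defer the sharp estimate. That estimate is not a routine afterthought; it is the entire content of the bound. One needs an explicit inequality of the type $\binom{n}{\rho}_{q}\leq c_q\,q^{\rho(n-\rho)}$ with a constant $c_q$ depending on $q$ (together with $\prod_{j=0}^{\rho-1}(q^m-q^j)<q^{m\rho}$ and control of the lower-order terms of the sum), and then a case analysis showing that for $q>2$ the constant is small enough to yield $n\geq\lceil mk/\rho\rceil-m+\rho$, while for $q=2$ it is not — which is precisely why the statement splits into the $q>2$, $q=2$ with $\rho>1$, and $q=2$, $\rho=1$ cases. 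As written, none of this is carried out, so the inequality $s_{q^m/q}(k,\rho)\geq\lceil mk/\rho\rceil-m+\rho$ (and its $q=2$ variants) is not established by your argument; you have identified the right counting framework but stopped exactly at the step that constitutes the proof.
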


\begin{remark}
Let us resume here all other known properties of $s_{q^m/q}(k,\rho)$. By \cite[Theorem 3.6]{bonini2022saturating} and \cite[Theorem 3.8]{bonini2022saturating}, the following holds, for all positive integers $m,k,k'$, $\rho\in [\min\{k,m\}]$, $\rho'\in [\min\{k',m\}]$.
\begin{itemize}
    \item[{\rm (a)}] If $\rho<\min\{k,m\}$, then $s_{q^m/q}(k,\rho+1) \leq s_{q^m/q}(k,\rho).$
    \item[{\rm (b)}] $s_{q^m/q}(k,\rho)<s_{q^m/q}(k+1,\rho)$.
    \item[{\rm (c)}] If $\rho<m$, then $s_{q^m/q}(k+1,\rho+1) \leq s_{q^m/q}(k,\rho)+1$.
    \item[{\rm (d)}] If $\rho+\rho'\leq \min\{k+k',m\}$, $s_{q^m/q}(k+k',\rho+\rho') \leq s_{q^m/q}(k,\rho)+s_{q^m/q}(k',\rho')$.
\end{itemize}
Using linear cutting blocking sets, introduced in \cite{alfarano2022linear}, one gets
\[s_{q^{r(k-1)}/q}(k,k-1)\leq 2k+r-2,\]
(see \cite[Corollary 4.7.]{bonini2022saturating}) and
$s_{q^{2r}/q}(3,2)\leq r+3$ for all $r\geq 4$ (an easy consequence of  \cite[Theorem 7.16]{gruica2022generalised}).

Using subgeometries (see \cite[Theorem 4.14.]{bonini2022saturating}), one gets
\[s_{q^{tr}/q}(t(r-1)+1+h,t(r-1)+1)\leq th+t(r-1)+1,\]
for $t,s\geq 2$, $h\geq 0$.\\
Finally, $s_{q^m/q}(k,\rho)$ is determined in the following case (see \cite[Section 5]{bonini2022saturating}):
\begin{align*}
    s_{q^m/q}(k,1) & = m(k-1)+1, & \text{ for all }m,k\geq 2,  \\
    s_{q^m/q}(k,k) & = k, & \text{ for all }m,k\geq 2,\\
    s_{q^{2r}/q}(3,2) & = r+2, & \text{ for } r\neq 3,5\bmod 6\text{ and }r\geq 4,
    \\
    s_{q^{2r}/q}(3,2) & = r+2, & \text{ for } \gcd(r,(q^{2s}-q^s+1)!)=1, r\text{ odd}, 1\leq s\leq r, \gcd(r,s)=1 \ \text{(see \cite{lia2023short})},
    \\
    s_{q^{10}/q}(3,2) & = 7, & \text{ for } q=p^{15h+s}, p\in\{2,3\}, \gcd(s,15)=1 \ (see \cite{bartoli2021evasive}),\\
     s_{q^{10}/q}(3,2) & = 7, & \text{ for } q=5^{15h+1}, \ (see \cite{bartoli2021evasive}),\\
    s_{q^{10}/q}(3,2) & = 7, & \text{ for } q \text{\ odd}, q= 2,3\bmod 5 \text{ and for } q=2^{2h+1}, h\geq 1,\ (see \cite{lia2023short}),
    \\
    s_{q^{2r}/q}(2r,2r-1)  &=2r+1, & \text{ for all }r\geq 2.
\end{align*}
\end{remark}

\section{Saturating systems meeting the lower bound}\label{sec:smallsaturating}

In this section we aim to present saturating systems of minimal $\F_q$-dimension.

The first construction is based essentially on the notion of Moore matrix. Let us recall that a square matrix over $\F_q$ is a Moore matrix if it has successive powers of the Frobenius automorphism applied to its columns. It is invertible if and only if the elements in the left hand column are linearly independent over $\F_q$.

\begin{theorem}\label{Th:main}
Let $k=\rho t$ for some integer $t\geq 1$. Then 
\[\cU:=\{(x_1,x_1^q,\ldots,x_1^{q^{\rho-1}},x_2,x_2^q,\ldots,x_2^{q^{\rho-1}},\ldots,x_{t-1},x_{t-1}^q,\ldots,x_{t-1}^{q^{\rho-1}},a_1,a_2,\ldots,a_\rho)^T: x_i\in\F_{q^m}, a_j\in\F_q\}.\]
is rank $\rho$-saturating. Therefore,
\[s_{q^m/q}(\rho t,\rho)= m(t-1)+\rho.\]
\end{theorem}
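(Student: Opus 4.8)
The plan is to verify directly that $\cU$ is a rank $\rho$-saturating system, and then to deduce the dimension formula by combining this with the lower bound \eqref{eq:lb}. First I would compute $\dim_{\F_q}\cU$: the coordinates $x_1,\ldots,x_{t-1}$ each contribute an $\F_q$-space of dimension $m$ (the map $x_i\mapsto(x_i,x_i^q,\ldots,x_i^{q^{\rho-1}})$ is $\F_q$-linear and injective since $\rho\le m$), and the last $\rho$ coordinates $a_1,\ldots,a_\rho\in\F_q$ contribute dimension $\rho$; these blocks sit in disjoint coordinate positions, so $\dim_{\F_q}\cU=m(t-1)+\rho$. Since this equals $\lceil mk/\rho\rceil-m+\rho$ when $\rho\mid k$ (indeed $mk/\rho=mt$, so $\lceil mk/\rho\rceil-m+\rho=mt-m+\rho=m(t-1)+\rho$), matching the bound is immediate once the saturating property is established; the only substantive work is to show $L_\cU$ is $(\rho-1)$-saturating, i.e. that $\rho$ is exactly the least integer such that every point of $\PG(\rho t-1,q^m)$ lies in the span of $\rho$ points of $L_\cU$.

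For the covering part, take an arbitrary nonzero $v=(v_1,\ldots,v_k)^T\in\F_{q^m}^k$, written in $t$ blocks $v=(w_1,\ldots,w_{t-1},w_t)$ with $w_i\in\F_{q^m}^{\rho}$. The idea is to realise $\langle v\rangle$ inside the span of $\rho$ vectors of $\cU$, each of which is of the form $u(x)=(x^{(1)},x^{(2)},\ldots,x^{(t-1)},\alpha)$ where $x^{(i)}=(x_i,x_i^q,\ldots,x_i^{q^{\rho-1}})$ and $\alpha\in\F_q^{\rho}$. Choosing $\rho$ elements $u(\lambda_1),\ldots,u(\lambda_\rho)$ and an $\F_{q^m}$-combination $\sum_j c_j u(\lambda_j)$, the $i$-th block becomes $\big(\sum_j c_j\lambda_{j,i},\ \sum_j c_j\lambda_{j,i}^q,\ \ldots,\ \sum_j c_j\lambda_{j,i}^{q^{\rho-1}}\big)$, which is a generalized Vandermonde/Moore-type expression in the $\lambda_{j,i}$. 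The key algebraic fact to exploit is that for any prescribed block $w_i=(w_{i,0},\ldots,w_{i,\rho-1})\in\F_{q^m}^{\rho}$, one can solve for the parameters so that this block equals a scalar multiple of $w_i$: since the $\F_q$-linearized polynomial $\sum_j c_j X^{q^{?}}$ structure gives, for fixed $c_j$, a Moore matrix in the $\lambda_{j,i}$ whose determinant is nonzero precisely when the $\lambda_{j,i}$ are $\F_q$-independent, and conversely for fixed $\lambda_{j,i}$ an $\F_q$-independent choice makes the Moore matrix in the $c_j$ invertible. Concretely I would fix an $\F_q$-basis $\lambda_1,\ldots,\lambda_\rho$ of an $\F_q$-subspace of $\F_{q^m}$ of dimension $\rho$ (possible as $\rho\le m$), use the same $\lambda_j$ in every block, and then solve, block by block, a linear system over $\F_{q^m}$ of the shape "Moore matrix times unknowns $=w_i$"; the catch is that the unknowns $c_j$ must be chosen \emph{once} and work for all $t-1$ blocks simultaneously, plus the last block $w_t$ must be hit by $\sum_j c_j\alpha_j$ with $\alpha_j\in\F_q^{\rho}$. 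I expect the cleanest route is to exploit the freedom in the last $\rho$ constant coordinates: after using block $t$ to pin down the $c_j$ (or a scalar multiple of $v$), show the first $t-1$ blocks are then automatically reachable because each $x_i$ can be chosen freely and independently in $\F_{q^m}$, so the genuinely constrained part is only the last block of size $\rho$, which the $\F_q^\rho$ of constants handles.

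The main obstacle will be the simultaneous-solvability bookkeeping: one must pick $\rho$ vectors of $\cU$ and coefficients $c_1,\ldots,c_\rho\in\F_{q^m}$ so that all $t$ blocks of $\sum c_j u_j$ are proportional to the corresponding blocks of $v$ with a common proportionality constant. I anticipate the argument runs by first normalizing — if $w_t\ne 0$ in $\F_q^\rho$-span this is easy, and if the whole last block is zero one perturbs — then using that the "free" coordinates $x_i\in\F_{q^m}$ in positions $1,\ldots,t-1$ impose no obstruction at all: for any target $w_i$, one needs $\sum_j c_j\lambda_{j,i}^{q^\ell}$ to range over all of $\F_{q^m}^\rho$ as the single scalar $x_i$ — wait, here lies the subtlety, since each vector $u_j$ contributes only one scalar $x_{j,i}$ per block, so the block contribution is $\big(\sum_j c_j x_{j,i},\sum_j c_j x_{j,i}^q,\ldots\big)$, and to hit an arbitrary $w_i$ I need the map $(x_{1,i},\ldots,x_{\rho,i})\mapsto(\sum_j c_j x_{j,i}^{q^\ell})_{\ell}$ to be surjective onto $\F_{q^m}^\rho$, which holds iff $c_1,\ldots,c_\rho$ are $\F_q$-linearly independent (Moore-matrix criterion again). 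So the strategy crystallizes: \textbf{choose} $c_1,\ldots,c_\rho\in\F_{q^m}$ that are $\F_q$-linearly independent and such that the last block $w_t$ lies in the $\F_q$-span of suitable $\alpha_j$'s — but $\alpha_j\in\F_q^\rho$ means $\sum c_j\alpha_j$ ranges over an $\F_q$-space of $\F_q$-dimension $\le \rho$ inside $\F_{q^m}^\rho$, generally not all of it, so instead I would first rescale $v$ by a nonzero $\F_{q^m}$-scalar so its last block becomes a fixed convenient vector in $\F_q^\rho$ (using that any nonzero vector in $\F_{q^m}^\rho$ is proportional to one with prescribed support pattern, and handling separately the degenerate case $w_t=0$ by replacing one $\lambda_{j,i}$-choice). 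Once the last block is matched and the $c_j$ are fixed $\F_q$-independent, each of the first $t-1$ blocks is hit by solving an invertible Moore system for $(x_{1,i},\ldots,x_{\rho,i})$ — done independently per block — giving $v\in\langle u(\lambda^{(1)}),\ldots,u(\lambda^{(\rho)})\rangle$. Finally, to see $\rho$ is \emph{minimal} (not just an upper bound on the covering number), invoke the lower bound \eqref{eq:lb}: a $(\rho-2)$-saturating linear set would need rank $\ge \lceil mk/(\rho-1)\rceil-m+\rho-1 > m(t-1)+\rho$ for $\rho\ge 2$, contradicting that the rank is a fixed invariant — or more cleanly, argue $\cU$ itself is not $(\rho-2)$-saturating by exhibiting a point not covered by $\rho-1$ of its points (a dimension count: $\rho-1$ points span a subspace meeting the picture in too few dimensions). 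Either way the equality $s_{q^m/q}(\rho t,\rho)=m(t-1)+\rho$ follows from sandwiching this construction's dimension between the lower bound \eqref{eq:lb} and the fact that it achieves the saturating property.
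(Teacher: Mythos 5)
Your overall strategy is the same as the paper's (solve each of the first $t-1$ blocks via an invertible Moore system, use the $\F_q^{\rho}$ of constants for the last block, and get minimality of $\rho$ plus the equality $s_{q^m/q}(\rho t,\rho)=m(t-1)+\rho$ from the lower bound \eqref{eq:lb}), and the Moore-matrix criterion you invoke is exactly the right tool. However, there is a genuine gap at the one step that actually requires care: the matching of the last block. You propose to ``rescale $v$ by a nonzero $\F_{q^m}$-scalar so its last block becomes a fixed convenient vector in $\F_q^\rho$.'' This is impossible in general: if the last block is $(1,\beta,0,\ldots,0)$ with $\beta\in\F_{q^m}\setminus\F_q$, then no $\F_{q^m}$-multiple of it lies in $\F_q^\rho$, since proportionality to a vector over $\F_q$ forces all ratios of coordinates to lie in $\F_q$. (Having ``prescribed support pattern'' is much weaker than lying in $\F_q^\rho$ and does not help here.) There is also a small slip just before: for $\F_q$-independent $c_1,\ldots,c_\rho$ the set $\{\sum_j c_j\alpha_j:\alpha_j\in\F_q^\rho\}$ is $(\langle c_1,\ldots,c_\rho\rangle_{\F_q})^\rho$, of $\F_q$-dimension $\rho^2$, not $\le\rho$; this is harmless to your point that it is not all of $\F_{q^m}^\rho$, but it is exactly the observation that shows how to close the gap.

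The fix — which is what the paper does — is to choose the coefficients $c_j$ \emph{after} looking at the last block, rather than normalizing $v$. If $(A_1,\ldots,A_\rho)$ is the last block and $s=\dim_{\F_q}\langle A_1,\ldots,A_\rho\rangle_{\F_q}$, take (after reordering) $c_i=A_i$ for $i\in[s]$ and extend by $c_{s+1},\ldots,c_\rho$ to an $\F_q$-independent $\rho$-tuple in $\F_{q^m}$ (possible since $\rho\le m$). Then every $A_h$ is an $\F_q$-combination of $c_1,\ldots,c_\rho$, so suitable constants $\alpha_j\in\F_q^\rho$ reproduce the last block exactly, while the $\F_q$-independence of the $c_j$ keeps every Moore system for the remaining blocks invertible, exactly as you argued. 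The degenerate case $A_1=\cdots=A_\rho=0$ is then trivial ($s=0$, all $\alpha_j=0$) and needs no separate perturbation. With this replacement of your normalization step, the rest of your write-up (dimension count, per-block solvability, and minimality of $\rho$ via \eqref{eq:lb}, where one should also note that the $q=2$ case of the bound gives the same value when $\rho\mid k$) coincides with the paper's proof.
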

\begin{proof} 
Let $r=t-1$ and consider the $\mathbb{F}_q$-vector space
\[\cU:=\{(x_1,x_1^q,\ldots,x_1^{q^{\rho-1}},x_2,x_2^q,\ldots,x_2^{q^{\rho-1}},\ldots,x_r,x_r^q,\ldots,x_r^{q^{\rho-1}},a_1,a_2,\ldots,a_\rho)^T: x_i\in\F_{q^m}, a_j\in\F_q\}.\]
%Clearly, its $\F_q$-dimension is $rm+\rho=mk/\rho -m+\rho$. 
Let \[v=(Y_1^{(1)},\ldots,Y_\rho^{(1)},Y_1^{(2)},\ldots,Y_\rho^{(2)},\ldots, Y_1^{(r)},\ldots,Y_\rho^{(r)},A_1,\ldots,A_\rho)^T\in V(k,q^m).\] We want to determine 
\[u_i=(x_1^{(i)},\ldots,(x_1^{(i)})^{q^{\rho-1}},x_2^{(i)},\ldots,(x_2^{(i)})^{q^{\rho-1}},\ldots,x_r^{(i)},\ldots,(x_r^{(i)})^{q^{\rho-1}},a_1^{(i)},a_2^{(i)},\ldots,a_\rho^{(i)})^T\in \cU\]
such that 
$$v\in \langle u_1,\ldots,u_\rho\rangle_{\F_q^m}.$$
Let $s\in \{0,\ldots,\rho\}$ be the $\mathbb{F}_q$-rank of $A_1,\ldots,A_\rho$.\\
Without loss of generality we can suppose that $A_1,\ldots,A_s$ are $\mathbb{F}_q$-linear independent and 
$$A_j=\sum_{h=1}^{s}\alpha_j^{(h)}A_h,$$
for $j\in\{s+1,\ldots,\rho\}$ and $\alpha_j^{(h)}\in \mathbb{F}_q$. Set $\lambda_i=A_i$, for $i\in[s]$ and choose

\begin{equation}\label{Eq:a_i}
\begin{pmatrix}
a_1^{(1)}&a_2^{(1)}&\cdots & a_\rho^{(1)}\\
a_1^{(2)}&a_2^{(2)}&\cdots & a_\rho^{(2)}\\
\vdots&\vdots&&\vdots\\
a_1^{(k)}&a_2^{(k)}&\cdots & a_\rho^{(k)}
\end{pmatrix}
=
\begin{pmatrix}
1&0&\cdots&0 & \alpha_{s+1}^{(1)}&\alpha_{s+2}^{(1)} &\cdots &\alpha_{k}^{(1)}\\
0&1&\cdots&0&  \alpha_{s+1}^{(2)}&\alpha_{s+2}^{(2)} &\cdots &\alpha_{\rho}^{(2)}\\
\vdots&\vdots&&\vdots&\vdots&\vdots&&\vdots\\
0&0&\cdots&1 & \alpha_{s+1}^{(s)}&\alpha_{s+2}^{(s)} &\cdots &\alpha_{\rho}^{(s)}\\\\
0&0&\cdots&0 & 0&0&\cdots&0\\
\vdots&\vdots&&\vdots&\vdots&\vdots&&\vdots\\
0&0&\cdots&0 & 0&0&\cdots&0\\
\end{pmatrix}.
\end{equation}
Consider $\lambda_{s+1},\ldots,\lambda_{\rho}$ such that $A_1,\ldots,A_s,\lambda_{s+1},\ldots,\lambda_{\rho}$ are $\mathbb{F}_q$-linear independent.\\
Thus for each $j\in[r]$ there exists a unique  $(x_j^{(1)},\ldots,x_j^{(\rho)})^T\in \mathbb{F}_{q^m}^\rho$ such that  
$$
(x_j^{(1)},\ldots,x_j^{(\rho)})
\begin{pmatrix}
\lambda_1&\sqrt[q]{\lambda_1}&\ldots&\sqrt[q^{\rho-1}]{\lambda_1}\\
\lambda_2&\sqrt[q]{\lambda_2}&\ldots&\sqrt[q^{\rho-1}]{\lambda_2}\\
\vdots&\vdots&&\vdots\\
\lambda_\rho&\sqrt[q]{\lambda_\rho}&\cdots&\sqrt[q^{\rho-1}]{\lambda_\rho}\\
\end{pmatrix}=\left(Y_1^{(j)},\sqrt[q]{Y_2^{(j)}},\ldots,\sqrt[q^{\rho-1}]{Y_\rho^{(j)}}\right),$$
i.e.
$$
(\lambda_1,\ldots,\lambda_\rho)\begin{pmatrix}
x_j^{(1)}&(x_j^{(1)})^q&\ldots&(x_j^{(1)})^{q^{\rho-1}}\\
x_j^{(2)}&(x_j^{(2)})^q&\ldots&(x_j^{(2)})^{q^{\rho-1}}\\
\vdots&\vdots&&\vdots\\
x_j^{(\rho)}&(x_j^{(\rho)})^q&\ldots&(x_j^{(\rho)})^{q^{\rho-1}}\\
\end{pmatrix}=\left(Y_1^{(j)},Y_2^{(j)},\ldots,Y_\rho^{(j)}\right).$$

This, together with \eqref{Eq:a_i}, provides a choice for $u_i$, $i\in[\rho]$ such that $v\in \langle u_1,\ldots,u_\rho\rangle_{\F_{q^m}}$ and shows that any vector $v\in V(k,q^m)$ is $\rho$-saturated.

Such a $\rho$ is minimal since \[s_{q^m/q}(\rho t,\rho-1)\geq \frac{\rho}{\rho-1}\cdot m( t-1)+\frac{m}{\rho-1}+\rho-1 >m(t-1)+\rho,\] 
for $q>2$, and similarly for $q=2$.
\end{proof}

\begin{remark}\label{Remark}
Observe that in Theorem \ref{Th:main} one can also consider the set
\[\cU:=\{(x_1,x_1^{q^{s_1}},\ldots,x_1^{q^{s_1(\rho-1)}},\ldots,x_r,x_r^{q^{s_r}},\ldots,x_r^{q^{s_r(\rho-1)}},a_1,a_2,\ldots,a_\rho)^T: x_i\in\F_{q^m}, a_j\in\F_q\},\]
with $\gcd(m,s_j)=1$ and prove with the same arguments that $\cU$ is rank $\rho$-saturating.
\end{remark}

\bigskip

The second construction is based on  the following generalization of scattered linear sets, introduced first in \cite{csajbok2021generalising}.

\begin{definition}
Let $U$ be an $\F_q$-subspace of $V(k,q^m)$
and $h<k$ be a positive integer. Then $U$ is called $h$-scattered if $\langle \cU\rangle_{\F_{q^m}}=V$ and for every  $\F_{q^m}$-subspace $W$ of $V$ of dimension $h$, $\dim_{\F_q}(W\cap \cU)\leq h$.
\end{definition}

The $\F_q$-dimension of an $h$-scattered subspace is upper bounded as follows.

\begin{theorem}[{\cite[Theorem 2.3.]{csajbok2021generalising}}]\label{Th:csajbok2021generalising}
Let $\cU$ be an $h$-scattered $\F_q$-subspace of $V(k,q^m)$. Then either
\begin{equation}\label{eq:upperboundhscattered}\dim_{\F_q}\cU\leq \left\lfloor \frac{km}{h+1}\right\rfloor,
\end{equation}
or $\dim_{\F_q} \cU=k$ and $\cU$ defines a subgeometry of ${\rm PG}(k-1,q^m)$ and it is $(k-1)$-scattered.
\end{theorem}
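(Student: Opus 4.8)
The plan is to split the argument on the value of $n:=\dim_{\F_q}\cU$. Because $\langle\cU\rangle_{\F_{q^m}}=V$, any $\F_q$-basis of $\cU$ already spans $V$ over $\F_{q^m}$, so $n\ge k$; hence only the two cases $n=k$ and $n>k$ can occur. For $n=k$ I would observe that an $\F_q$-basis $e_1,\dots,e_k$ of $\cU$ is then also an $\F_{q^m}$-basis of $V$, so in the associated coordinates $\cU=\F_q^k\subseteq\F_{q^m}^k$ and $L_\cU$ is a canonical subgeometry $\PG(k-1,q)$; a one-line check --- a hyperplane $\sum_i b_i x_i=0$ meets $\F_q^k$ in the kernel of the nonzero $\F_q$-linear form $(a_i)\mapsto\sum_i b_i a_i$, hence in $\F_q$-dimension at most $k-1$ --- shows it is $(k-1)$-scattered. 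So the real content lies in the case $n>k$, where moreover $n\ge k+1\ge h+2$.

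For $n>k$ I would first record the elementary reformulation of the defining property: an $\F_q$-subspace $W\subseteq V$ with $\langle W\rangle_{\F_{q^m}}=V$ is $j$-scattered exactly when any $j+1$ vectors of $W$ that are $\F_q$-linearly independent are also $\F_{q^m}$-linearly independent (for $j=1$ this is the usual scattered condition). Using this together with $n\ge h+2$, a short padding argument --- complete a hypothetical $\F_q$-independent but $\F_{q^m}$-dependent family of $j+1\le h+1$ vectors to a family of $h+1$ $\F_q$-independent vectors of $\cU$ and read off a contradiction with $h$-scatteredness --- shows that $\cU$ is in fact $j$-scattered for every $1\le j\le h$.

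The crux is then to exhibit many transversal dilates of $\cU$ inside $V$. Set $h':=\min\{h,m-1\}$ and choose $\mu_0=1,\mu_1,\dots,\mu_{h'}\in\F_{q^m}$ that are $\F_q$-linearly independent, which is possible since $h'+1\le m$. I would show that the sum $\mu_0\cU+\mu_1\cU+\cdots+\mu_{h'}\cU$ is direct: given $\sum_{j=0}^{h'}\mu_j u_j=0$ with $u_j\in\cU$ not all zero, let $d$ be the $\F_q$-dimension of $\langle u_0,\dots,u_{h'}\rangle_{\F_q}$ and fix an $\F_q$-basis $u_{i_1},\dots,u_{i_d}$ of it; since $d\le h'+1$ and $d-1\le h$, the previous step makes $u_{i_1},\dots,u_{i_d}$ $\F_{q^m}$-linearly independent, so expanding each $u_j=\sum_\ell c_{j\ell}u_{i_\ell}$ with $c_{j\ell}\in\F_q$ and substituting gives $\sum_\ell\bigl(\sum_j\mu_j c_{j\ell}\bigr)u_{i_\ell}=0$, hence $\sum_j\mu_j c_{j\ell}=0$ for every $\ell$; the $\F_q$-independence of the $\mu_j$ then forces all $c_{j\ell}=0$, i.e.\ all $u_j=0$, a contradiction. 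Consequently $(h'+1)\,n\le\dim_{\F_q}V=mk$. If $h\le m-1$ this is exactly $n\le\lfloor mk/(h+1)\rfloor$; if $h\ge m$ it gives $n\le k$, contradicting $n>k$, so in that regime only the subgeometry case survives.

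I expect the main obstacle to be packaging the direct-sum step cleanly together with the descent from $h$-scatteredness to $j$-scatteredness for all $j\le h$ --- in particular keeping the argument uniform across the sub-case $d=h'+1$ (there one uses $h'$-scatteredness directly, the coefficient $\mu_0=1$ being nonzero) and across the degenerate ranges $m\le h$ and $n=k=h+1$, which the case split above is designed to absorb. The remaining ingredients --- the bound $n\ge k$, the subgeometry identification, and the final arithmetic with the floor function --- are routine.
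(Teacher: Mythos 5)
Your argument is correct, but note that the paper itself contains no proof of this statement: it is imported verbatim from the cited reference, so the only comparison available is with that source rather than with anything internal to this article. Your route is the natural generalization of the classical one-line dilate argument for the Blokhuis--Lavrauw bound in the case $h=1$ (namely $\cU\cap\lambda\cU=\{0\}$ for $\lambda\in\F_{q^m}\setminus\F_q$): you replace the single dilate by $h'+1$ dilates $\mu_0\cU,\dots,\mu_{h'}\cU$ with $\F_q$-independent coefficients, and the dependence analysis is made to work by the preliminary descent ``$h$-scattered $\Rightarrow$ $j$-scattered for all $j\le h$''. I checked the two delicate points and they hold: the descent only needs $\dim_{\F_q}\cU\ge h+1$ (guaranteed by $\dim_{\F_q}\cU\ge k>h$) together with $h<k$ so that the $\F_{q^m}$-span of the padded family, of dimension at most $h$, can be enlarged to an $h$-dimensional subspace; and in the directness step the extraction of an $\F_q$-basis $u_{i_1},\dots,u_{i_d}$ from the $u_j$, its $\F_{q^m}$-independence via $(d-1)$-scatteredness, and the final use of the $\F_q$-independence of the $\mu_j$ on the coefficients $c_{j\ell}\in\F_q$ are all sound. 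The endgame is also handled correctly: $h\le m-1$ yields $(h+1)\dim_{\F_q}\cU\le km$ and hence the floor bound, while $h\ge m$ forces $\dim_{\F_q}\cU=k$, which by your coordinate argument gives the canonical subgeometry and its $(k-1)$-scatteredness. What your approach buys is a short, elementary, self-contained proof of exactly the dichotomy stated; what it does not reproduce is the surrounding structural material that the cited reference develops for maximum $h$-scattered subspaces, but none of that is needed for the statement as quoted here.
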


\begin{definition}
    An $h$-scattered $\F_q$-subspace of $V(k,q^m)$ whose $\F_q$-dimension meets the bound~\eqref{eq:upperboundhscattered} is called maximum $h$-scattered.
\end{definition}

\begin{remark}\label{rmk:existencehscattered}
The upper bound \eqref{eq:upperboundhscattered} is known to be achieved in the following cases:

\begin{itemize}
    \item[{\rm (a)}] $h=1$ and $mk$ is even, see \cite{ball2000linear,bartoli2018maximum,blokhuis2000scattered,csajbok2017maximum};
    \item[{\rm (b)}] $h=1$, $k=3$, $m=3$, see \cite{bartoli2021evasive};
    \item[{\rm (c)}] $h=1$, $k=3$, $m=5$, $q=p^{15t+s}$ with $p\in \{2,3\}$ and $\gcd(s,15)=1$ or $q=5^{15t+1}$, see \cite{bartoli2021evasive};
    \item[{\rm (d)}] $h=1$, $k=3$, $m=5$, $q$ odd and $q=2,3\mod 5$ or $q=2^{2t+1}$ with $t\geq 1$, see \cite{lia2023short};
    \item[{\rm (e)}] $h=m-3$ and $m\geq 4$ is even and $k=r(m-2)/2$ where $r\geq 3$ is odd, see \cite[Theorem 3.6]{csajbok2021generalising};
    \item[{\rm (f)}] $h=k-1$ and $k\leq m$, see \cite[Lemma 2.2]{csajbok2021generalising};
    \item[{\rm (g)}] $(h+1)|k$ and $m\geq h+1$, see \cite{csajbok2021generalising,napolitano2021linear}.
\end{itemize}
\end{remark}

\begin{theorem}\label{thm:saturatingscattered}
Let $m\geq h+1$. If $\cU$ is an $h$-scattered $\F_q$-subspace of $V(k,q^m)$ of $\F_q$-dimension (at least) $\left\lfloor\frac{m(k-1)}{h+1}\right\rfloor+1$, then $\cU$ is rank $\rho$-saturating, with $\rho\leq h+1$.
\end{theorem}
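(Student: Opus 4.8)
The plan is to prove the equivalent statement that every point $Q=\langle v\rangle_{\F_{q^m}}$ of $\PG(k-1,q^m)$ is generated by at most $h+1$ points of $L_\cU$; by the minimality in the definition of a rank $\rho$-saturating system this yields $\rho\le h+1$. Since $\langle\cU\rangle_{\F_{q^m}}=V$, the assertion is immediate when $k=h+1$ (an $\F_{q^m}$-basis of $V$ extracted from $\cU$ consists of $h+1$ points of $L_\cU$ generating the whole space), so I assume $k\ge h+2$. The core reduction I would isolate is a \emph{padding} step using only $h$-scatteredness and $\langle\cU\rangle_{\F_{q^m}}=V$: if $W$ is an $\F_{q^m}$-subspace with $v\in W$, $\dim_{\F_{q^m}}W=:j\le h+1$ and $\dim_{\F_q}(W\cap\cU)\ge j$, then one may successively adjoin to $W$ elements of $\cU$ lying outside the current span — possible since the span stays of $\F_{q^m}$-dimension $\le h+1\le k=\dim V$ — to obtain $\widetilde W\supseteq W$ with $v\in\widetilde W$, $\dim_{\F_{q^m}}\widetilde W=h+1$ and $\dim_{\F_q}(\widetilde W\cap\cU)\ge h+1$; and then necessarily $\langle\widetilde W\cap\cU\rangle_{\F_{q^m}}=\widetilde W$, for otherwise $\widetilde W\cap\cU$ would lie in some $\F_{q^m}$-subspace of dimension $h$ of weight $\ge h+1>h$, contradicting $h$-scatteredness. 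Hence it suffices, for each $v$, to produce $W\ni v$ with $\dim_{\F_{q^m}}W\le h+1$ and $\dim_{\F_q}(W\cap\cU)\ge\dim_{\F_{q^m}}W$: a basis of the resulting $\widetilde W$ chosen inside $\widetilde W\cap\cU$ then gives $h+1$ points of $L_\cU$ generating $Q$.

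If $Q\in L_\cU$, i.e. some nonzero $u\in\cU$ spans $\langle v\rangle_{\F_{q^m}}$, one takes $W=\langle v\rangle_{\F_{q^m}}$ and concludes by the reduction. The remaining case is $w_{L_\cU}(\langle v\rangle_{\F_{q^m}})=0$, that is $\cU\cap\langle v\rangle_{\F_{q^m}}=\{0\}$. Here I would pass to the quotient $\pi\colon V\to\overline V:=V/\langle v\rangle_{\F_{q^m}}$, with $\dim_{\F_{q^m}}\overline V=k-1\ge h+1$, and set $M:=\pi(\cU)$. As $\pi$ is injective on $\cU$, we get $\dim_{\F_q}M=\dim_{\F_q}\cU\ge\left\lfloor\frac{m(k-1)}{h+1}\right\rfloor+1$, and since $\pi$ is a surjective $\F_{q^m}$-linear map, $\langle M\rangle_{\F_{q^m}}=\pi(\langle\cU\rangle_{\F_{q^m}})=\overline V$.

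The key point — and the step I expect to be the main obstacle — is that $M$ need not itself be $h$-scattered (a projection of a scattered subspace need not be scattered), yet one must still find in $M$ a subspace of $\F_{q^m}$-dimension $h$ and weight $\ge h+1$. I would obtain this by an extremal argument against Theorem~\ref{Th:csajbok2021generalising}: if $M$ were $h$-scattered in $\overline V$ (legitimate, as $h<k-1$), then that theorem would force either $\dim_{\F_q}M\le\left\lfloor\frac{m(k-1)}{h+1}\right\rfloor$, contradicting the bound just obtained, or $M$ to define a subgeometry with $\dim_{\F_q}M=k-1$, which is also impossible because $m\ge h+1$ gives $\left\lfloor\frac{m(k-1)}{h+1}\right\rfloor+1\ge (k-1)+1>k-1$. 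Thus $M$ is not $h$-scattered; since it does span $\overline V$, the failure must be in the weight condition, so there is an $\F_{q^m}$-subspace $\overline W\subseteq\overline V$ with $\dim_{\F_{q^m}}\overline W=h$ and $\dim_{\F_q}(\overline W\cap M)\ge h+1$. Pulling back, $W:=\pi^{-1}(\overline W)$ contains $v$, has $\F_{q^m}$-dimension $h+1$, and — using $\ker(\pi|_{W\cap\cU})=\cU\cap\langle v\rangle_{\F_{q^m}}=\{0\}$ together with $\pi(W\cap\cU)=\overline W\cap M$ — satisfies $\dim_{\F_q}(W\cap\cU)=\dim_{\F_q}(\overline W\cap M)\ge h+1=\dim_{\F_{q^m}}W$. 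The reduction of the first paragraph then completes the argument: every point of $\PG(k-1,q^m)$ is generated by at most $h+1$ points of $L_\cU$, so $\cU$ is a rank $\rho$-saturating system with $\rho\le h+1$.
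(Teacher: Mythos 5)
Your proof is correct and follows essentially the same route as the paper: project/quotient from the point $\langle v\rangle_{\F_{q^m}}$, note the image of $\cU$ has $\F_q$-dimension exceeding the bound of Theorem~\ref{Th:csajbok2021generalising} (you also explicitly rule out the subgeometry alternative, which the paper leaves implicit), extract an $h$-dimensional $\F_{q^m}$-subspace of weight at least $h+1$, pull it back to an $(h+1)$-dimensional space through $v$, and use $h$-scatteredness of $\cU$ to see that $h+1$ $\F_q$-independent vectors there span it over $\F_{q^m}$. Your additional padding step and the separate treatment of $Q\in L_\cU$ and $k=h+1$ only handle trivial cases the paper passes over.
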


\begin{proof}
Let $P:=\langle v\rangle_{\F_{q^m}}$ with $P\notin L_\cU$ and project $\cU$ from $P$ to a hyperplane $\cH$ of $V(k,q^m)$ not containing $\langle v \rangle_{\mathbb{F}_{q^m}}$. Let $\overline{\cU}$ be such a projection. Then $\overline{\cU}$ is a subspace of $\cH$ of $\F_q$-dimension $\left\lfloor\frac{m(k-1)}{h+1}\right\rfloor+1$ which is not $h$-scattered since its dimension exceeds the upper bound in Theorem \ref{Th:csajbok2021generalising}. Hence there exists an $\mathbb{F}_{q^m}$-subspace $\cM$ of $\cH$ of $\F_{q^m}$-dimension $h$ such that $\dim_{\F_q}(\cM\cap\overline{\cU})\geq h+1$. Let $\mathcal{N}=\langle v,\cM\rangle_{\F_{q^m}}$, which is clearly of $\F_{q^m}$-dimension $h+1$ and such that $\dim_{\F_q}(\mathcal{N}\cap \cU)\geq h+1$. Let $u_1,\ldots,u_{h+1}\in \mathcal{N}\cap \cU$ be $h+1$ linearly independent vectors over $\F_q$. The $\F_{q^m}$-vector space $\langle u_1,\ldots,u_{h+1}\rangle_{\F_{q^m}}$ must have dimension $h+1$, since otherwise we would get a contradiction with $\cU$ being $h$-scattered. So $\mathcal{N}=\langle u_1,\ldots,u_{h+1}\rangle_{\F_{q^m}}$. Hence $v\in \mathcal{N}$ is $(h+1)$-saturated by $\cU$.
\end{proof}

\begin{corollary}\label{Th:hscatt}
Let $m\geq 4$ be an even integer. For $q>2$, if $r=3$ and $m<12$ or $r>3$ odd, then 
$$\frac{mr}2-2\leq s_{q^m/q}\left(\frac{r(m-2)}{2},m-2\right)\leq \frac{mr}2-1.$$
For $q=2$, the same holds if $r=3$ and $m<10$ or $r>3$ odd.
\end{corollary}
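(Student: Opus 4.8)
The plan is to derive Corollary~\ref{Th:hscatt} by combining Theorem~\ref{thm:saturatingscattered} with the existence result for maximum $h$-scattered subspaces recorded in Remark~\ref{rmk:existencehscattered}(e), together with the lower bound~\eqref{eq:lb}. The setting is $h=m-3$ and $k=r(m-2)/2$ with $m\geq 4$ even and $r\geq 3$ odd, so that $h+1=m-2$ divides $mk$ in the right way and the hypothesis $m\geq h+1$ of Theorem~\ref{thm:saturatingscattered} becomes $m\geq m-2$, which is automatic.

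First I would invoke Remark~\ref{rmk:existencehscattered}(e) to produce a maximum $(m-3)$-scattered $\F_q$-subspace $\cU$ of $V(k,q^m)$, whose $\F_q$-dimension is $\left\lfloor \frac{km}{m-2}\right\rfloor$. A short computation with $k=r(m-2)/2$ gives $\frac{km}{m-2}=\frac{rm}{2}$, an integer (since $rm$ is even, $m$ being even), so $\dim_{\F_q}\cU=\frac{rm}{2}$. Next I would check that this dimension is at least $\left\lfloor\frac{m(k-1)}{h+1}\right\rfloor+1=\left\lfloor\frac{m(k-1)}{m-2}\right\rfloor+1$: writing $m(k-1)=mk-m=\frac{rm(m-2)}{2}-m$, one gets $\frac{m(k-1)}{m-2}=\frac{rm}{2}-\frac{m}{m-2}$, and since $1<\frac{m}{m-2}\le 2$ for $m\geq 4$, the floor equals $\frac{rm}{2}-2$, so $\left\lfloor\frac{m(k-1)}{m-2}\right\rfloor+1=\frac{rm}{2}-1\le\frac{rm}{2}=\dim_{\F_q}\cU$. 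Hence Theorem~\ref{thm:saturatingscattered} applies and $\cU$ is rank $\rho$-saturating with $\rho\le h+1=m-2$; this yields the upper bound $s_{q^m/q}(k,m-2)\le \frac{rm}{2}-1$, provided one first passes, if necessary, to a subspace of exactly dimension $\frac{rm}{2}-1$ that is still $(m-3)$-scattered (any $\F_q$-subspace of an $h$-scattered subspace is $h$-scattered), or simply applies the theorem to $\cU$ itself noting that the conclusion $\rho\le h+1$ only needs dimension \emph{at least} the stated bound — which is exactly how Theorem~\ref{thm:saturatingscattered} is phrased.

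For the lower bound I would specialize~\eqref{eq:lb}. With $k=\frac{r(m-2)}{2}$ and $\rho=m-2$ we have $\frac{mk}{\rho}=\frac{rm}{2}$, so for $q>2$ the bound reads $s_{q^m/q}(k,m-2)\ge \left\lceil\frac{rm}{2}\right\rceil-m+(m-2)=\frac{rm}{2}-2$, giving the claimed $\frac{rm}{2}-2\le s_{q^m/q}(k,m-2)$. For $q=2$ one uses the middle branch $\left\lceil\frac{mk-1}{\rho}\right\rceil-m+\rho$; since $mk=\frac{rm(m-2)}{2}$ is a multiple of $m-2$, $mk-1$ is one less, so $\left\lceil\frac{mk-1}{m-2}\right\rceil=\frac{rm}{2}$ as well, and the lower bound is again $\frac{rm}{2}-2$. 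This matches the upper bound up to the additive gap of $1$, which is why the statement is an interval rather than an equality.

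The one genuinely delicate point — and the reason for the side conditions ``$r=3$ and $m<12$'' for $q>2$ (resp.\ $m<10$ for $q=2$) versus ``$r>3$ odd'' — is the precise range in which Remark~\ref{rmk:existencehscattered}(e), i.e.\ \cite[Theorem 3.6]{csajbok2021generalising}, guarantees the existence of the maximum $(m-3)$-scattered subspace. I would simply state that for $r>3$ odd (and $m\ge 4$ even) this existence is unconditional by that reference, while for $r=3$ the construction there is available only for small $m$, and the stated thresholds $m<12$ (for $q>2$) and $m<10$ (for $q=2$) come from cross-referencing the $k=3$, $h=1$ existence results in Remark~\ref{rmk:existencehscattered}(a)--(d) applied to the ``dual'' parameters, or directly from the range in which \cite{csajbok2021generalising} establishes the $r=3$ case. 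Once the relevant $h$-scattered subspace exists, the argument above is purely arithmetic. The main obstacle is therefore not the proof mechanics but correctly tracking exactly which $(m,r,q)$ are covered by the cited existence theorems; everything else is a direct substitution into Theorem~\ref{thm:saturatingscattered} and~\eqref{eq:lb}.
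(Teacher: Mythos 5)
Your construction and your lower-bound arithmetic coincide with the paper's: one takes a maximum $(m-3)$-scattered subspace $W$ of $V(r(m-2)/2,q^m)$ from Remark~\ref{rmk:existencehscattered}(e), passes to an $\F_q$-subspace $\cU\leq W$ of dimension exactly $\left\lfloor\frac{m(k-1)}{m-2}\right\rfloor+1=\frac{mr}{2}-1$, applies Theorem~\ref{thm:saturatingscattered}, and specializes \eqref{eq:lb} to $\rho=m-2$, obtaining $\frac{mr}{2}-2$ for both $q>2$ and $q=2$. Note that only the ``pass to a subspace'' option works here: applying Theorem~\ref{thm:saturatingscattered} to $W$ itself, as your alternative suggests, produces a saturating system of dimension $\frac{mr}{2}$ and hence only the weaker bound $s_{q^m/q}(k,m-2)\leq\frac{mr}{2}$.

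The genuine gap is in your treatment of the side conditions. Remark~\ref{rmk:existencehscattered}(e) gives maximum $(m-3)$-scattered subspaces for \emph{every} odd $r\geq 3$ and every even $m\geq 4$, with no extra restriction for $r=3$ and no dependence on $q$; so the thresholds $m<12$ (for $q>2$) and $m<10$ (for $q=2$) cannot come from the existence result — their dependence on $q$ already rules that out, and your suggested derivation from the $k=3$, $h=1$ cases of Remark~\ref{rmk:existencehscattered}(a)--(d) is not what happens. They enter in the step you skipped: Theorem~\ref{thm:saturatingscattered} only yields that $\cU$ is rank $\rho$-saturating for \emph{some} $\rho\leq m-2$, while $s_{q^m/q}(k,m-2)$ counts systems whose minimal saturating parameter is exactly $m-2$, so one must still exclude $\rho<m-2$. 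The paper does this by checking that $\dim_{\F_q}\cU=\frac{mr}{2}-1$ lies strictly below the lower bound \eqref{eq:lb} for every smaller parameter, i.e.
\[
\left\lceil\frac{m}{m-2-t}\cdot\frac{r(m-2)}{2}\right\rceil-2-t\;>\;\frac{mr}{2}-1\qquad\text{for all }0<t<m-2,
\]
and this inequality (respectively its $q=2$ analogue, with the $\lceil(mk-1)/\rho\rceil$ branch of \eqref{eq:lb}) is precisely what holds for $r>3$ odd, or for $r=3$ with $m<12$ (respectively $m<10$). As written, your jump from ``$\rho\leq m-2$'' to ``$s_{q^m/q}(k,m-2)\leq\frac{mr}{2}-1$'' is unjustified; it can be repaired either by the dimension comparison above or by invoking the monotonicity $s_{q^m/q}(k,\rho+1)\leq s_{q^m/q}(k,\rho)$ recalled in Section~\ref{sec:preliminaries}, but your proposal does neither, and it attributes the hypotheses of the corollary to the wrong ingredient.
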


\begin{proof}
If $r\geq 3$ is odd and $m\geq 4$ is even, as recalled in Remark \ref{rmk:existencehscattered}, there exists a maximum $(m-3)$-scattered subspace, say $W$, in $V(r(m-2)/2,q^m)$. Let $\cU$ be a subspace of $W$ of dimension \[\left\lfloor\frac{m(\frac{r(m-2)}{
%m- 
2}-1)}{m-2}\right\rfloor+1=\frac{mr}{2}-1.\] By Theorem \ref{thm:saturatingscattered}, $\cU$ is $\rho$-saturating with $\rho\leq m-2$. We have that $\cU$ cannot be rank $h$-saturating with $h<m-2$, because otherwise its $\F_q$-dimension would be smaller than the lower bound \eqref{eq:lb}. Actually, for $q>2$,
\[\left\lceil\frac{m}{m-2-t}\cdot \frac{r(m-2)}{2}\right\rceil-2-t>\frac{mr}{2}-1\]
for all $0<t<m-2$, when either $r\geq 4$ or $r=3$ and $m<12$.\\
For $q=2$, exactly the same arguments, with the slightly different lower bound, lead to the stated result.
\end{proof}

\begin{corollary}
If $mk$ is even, then 
\[\left\lceil \frac{m(k-2)}{2}\right\rceil+2 \leq  s_{q^m/q}(k,2)\leq \left\lfloor \frac{m(k-1)}{2}\right\rfloor +1=\left\lceil \frac{m(k-2)}{2}\right\rceil+2+\left\lfloor \frac{m}{2} \right\rfloor -1.\]
In particular $s_{q^2/q}(k,2)=k$. Moreover,
\[s_{q^5/q}(3,2)\in \{5,6\}\] 
for $q=p^t$ with $p\in\{2,3,5\}$ and 
\[s_{q^3/q}(3,2)=4.\]
\end{corollary}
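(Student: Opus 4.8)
The plan is to derive the whole statement from Theorem~\ref{thm:saturatingscattered} together with the existence results for maximum $h$-scattered subspaces collected in Remark~\ref{rmk:existencehscattered}, specialising to $h=1$, i.e. to ordinary maximum scattered subspaces. First I would set $h=1$ in Theorem~\ref{thm:saturatingscattered}: it says that an ordinary scattered $\F_q$-subspace of $V(k,q^m)$ of $\F_q$-dimension at least $\left\lfloor\frac{m(k-1)}{2}\right\rfloor+1$ is rank $\rho$-saturating with $\rho\le 2$. Since $\rho=1$ would force, by the case $s_{q^m/q}(k,1)=m(k-1)+1$ recorded in the Remark, an $\F_q$-dimension of $m(k-1)+1$, which exceeds $\left\lfloor\frac{m(k-1)}{2}\right\rfloor+1$ for $m\ge 2$, $k\ge 2$, we must have $\rho=2$, giving the upper bound $s_{q^m/q}(k,2)\le \left\lfloor\frac{m(k-1)}{2}\right\rfloor+1$ whenever such a scattered subspace exists.

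Next I would justify existence. By Remark~\ref{rmk:existencehscattered}(a), a maximum $1$-scattered $\F_q$-subspace of $V(k,q^m)$ exists exactly when $mk$ is even, and it has $\F_q$-dimension $\left\lfloor\frac{mk}{2}\right\rfloor=\frac{mk}{2}$; taking any $\F_q$-subspace of it of dimension $\left\lfloor\frac{m(k-1)}{2}\right\rfloor+1$ (which is $\le \frac{mk}{2}$, one checks $\left\lfloor\frac{m(k-1)}{2}\right\rfloor+1\le \left\lfloor\frac{mk}{2}\right\rfloor$ since $mk$ is even and $m\ge 2$) again yields a scattered subspace, and the argument above applies. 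For the lower bound I substitute $\rho=2$ into \eqref{eq:lb}: for $q>2$ it gives $\left\lceil\frac{mk}{2}\right\rceil-m+2$, and a short rewriting $\left\lceil\frac{mk}{2}\right\rceil-m+2=\left\lceil\frac{m(k-2)}{2}\right\rceil+2$ (valid because $m$ is an integer, so one can pull $-m$ inside the ceiling). This establishes the displayed chain of inequalities, and the final ``$+\lfloor m/2\rfloor-1$'' decomposition is just the identity $\left\lfloor\frac{m(k-1)}{2}\right\rfloor+1 = \left\lceil\frac{m(k-2)}{2}\right\rceil+2+\left(\left\lfloor\frac{m}{2}\right\rfloor-1\right)$, which I would verify separately for $m$ even and $m$ odd.

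The three special cases then follow by plugging in and observing that the lower and upper bounds coincide or nearly coincide. For $m=2$ (so $mk$ even for all $k$): $\left\lfloor\frac{m}{2}\right\rfloor-1=0$, so the two bounds agree and $s_{q^2/q}(k,2)=k$ (also matching $\left\lceil\frac{2(k-2)}{2}\right\rceil+2=k$). For $s_{q^5/q}(3,2)$, here $mk=15$ is odd, so part (a) of the Remark does not apply directly; instead I would invoke parts (b)--(d), which assert existence of a maximum $1$-scattered subspace of $V(3,q^5)$ in the relevant characteristics $p\in\{2,3,5\}$ — note that the bound is $\left\lfloor\frac{15}{2}\right\rfloor=7$ and a subspace of dimension $\left\lfloor\frac{5\cdot 2}{2}\right\rfloor+1=6$ inside it is still scattered — giving $s_{q^5/q}(3,2)\le 6$, while \eqref{eq:lb} with $k=3,m=5,\rho=2$ gives $\left\lceil\frac{15}{2}\right\rceil-5+2=5$, hence $s_{q^5/q}(3,2)\in\{5,6\}$. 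For $s_{q^3/q}(3,2)$, part (b) gives a maximum scattered subspace of $V(3,q^3)$ of dimension $\left\lfloor\frac{9}{2}\right\rfloor=4$, which already has exactly the required dimension $\left\lfloor\frac{3\cdot2}{2}\right\rfloor+1=4$, so $s_{q^3/q}(3,2)\le 4$; and \eqref{eq:lb} gives $\left\lceil\frac{9}{2}\right\rceil-3+2=4$, forcing equality $s_{q^3/q}(3,2)=4$.

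The only real obstacle is bookkeeping: making sure that, in each case, $\left\lfloor\frac{m(k-1)}{2}\right\rfloor+1$ does not exceed the dimension $\left\lfloor\frac{mk}{2}\right\rfloor$ of the ambient maximum scattered subspace (so that a scattered subspace of the desired dimension genuinely exists), and correctly matching the parameter $h=1$, $k=3$, $m=5$ against the precise hypotheses on $q$ in Remark~\ref{rmk:existencehscattered}(b)--(d) to cover all $q=p^t$ with $p\in\{2,3,5\}$. Everything else is a direct application of Theorem~\ref{thm:saturatingscattered} and elementary manipulation of floors and ceilings.
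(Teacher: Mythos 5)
Your proposal is correct and follows essentially the same route as the paper: the authors' proof is a one-line reference to the argument of Corollary \ref{Th:hscatt}, i.e.\ apply Theorem \ref{thm:saturatingscattered} with $h=1$ to a subspace of dimension $\left\lfloor\frac{m(k-1)}{2}\right\rfloor+1$ of the maximum scattered subspaces guaranteed by Remark \ref{rmk:existencehscattered}(a)--(d), note via ``simple inequalities'' that $\rho=2$, and combine with the lower bound \eqref{eq:lb}. Your spelled-out version (including the floor/ceiling identities and the parameter checks for $m=2$, $(k,m)=(3,5)$ and $(3,3)$) is exactly what the authors intend.
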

\begin{proof}
The proof works exactly as for Corollary \ref{Th:hscatt}, in the case stated in Remark \ref{rmk:existencehscattered} for $h=1$. Proving that $\rho=2$ it is much easier here, with simple inequalities.
\end{proof}

\begin{remark}
Note that Theorem \ref{thm:saturatingscattered} does not give stronger results in the case $\rho|k$.
\end{remark}

\section{Non-tightness of the lower bound}\label{sec:non-tightness}

In this section we will show that the lower bound \eqref{eq:lb} is not tight in general. In order to do it, we will consider $s_{q^4/q}(3,2)$. The lower bound states that 
\[s_{q^4/q}(3,2)\geq 4\]
and it is easy to realize that 
\[s_{q^4/q}(3,2)\leq 5,\]
because an example of rank $2$-saturating system of rank $5$ is obtained considering an $\F_q$-linear sets of rank 5 with a line of weight $4$ (with a scattered underlying space) and a point outside it. {\sc Magma} computational results show that $s_{16/2}(3,2)=s_{81/3}(3,2)=5$, so that the lower bound is not tight in the binary and ternary case. In the rest of the section we aim to generalize this result for infinitely many values of $q$. We will prove the following.

\begin{theorem}\label{TH}
If $q$ is even and large enough, then $s_{q^4/q}(3,2)=5$.
\end{theorem}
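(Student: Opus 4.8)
The plan is to prove that $s_{q^4/q}(3,2) \geq 5$ for $q$ even and large enough, since the upper bound $s_{q^4/q}(3,2)\leq 5$ is already elementary. Equivalently, I must show that no $\F_q$-subspace $\cU$ of $V(3,q^4)$ of dimension $4$ is rank $2$-saturating, i.e. for every such $\cU$ there exists a point $P\in\PG(2,q^4)$ that is not $1$-saturated by $L_\cU$ (not on any line joining two points of $L_\cU$). Note that a rank-$4$ linear set in $\PG(2,q^4)$ that spans the plane is, generically, scattered, so $L_\cU$ is a set of $(q^4-1)/(q-1)$ points; the condition that $P$ is $1$-saturated means $P$ lies on a secant line of $L_\cU$. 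So the goal becomes: the secant lines of a rank-$4$ linear set in $\PG(2,q^4)$ cannot cover the whole plane.

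First I would set up coordinates and translate the problem into algebra. After a suitable $\GL$ transformation, $\cU$ can be described via a pair (or triple) of $q$-polynomials: one may write $\cU=\{(x, f(x), g(x)) : x\in\F_{q^4}\}$ for $\F_q$-linearized polynomials $f,g$ over $\F_{q^4}$ of $q$-degree $\leq 3$ (handling separately the degenerate cases where $\cU$ does not project injectively to the first coordinate, which give linear sets of smaller weight on a line and are easily excluded). The point $P=\langle(a,b,c)\rangle$ is on a secant line of $L_\cU$ iff there exist $\F_q$-linearly independent $x,y\in\F_{q^4}^*$ and $\lambda,\mu\in\F_{q^4}$, not both zero, with $(a,b,c)=\lambda(x,f(x),g(x))+\mu(y,f(y),g(y))$. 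Eliminating $\lambda,\mu$, this says the three vectors $(a,b,c)$, $(x,f(x),g(x))$, $(y,f(y),g(y))$ are $\F_{q^4}$-linearly dependent, i.e. a certain $3\times3$ determinant $D(x,y)=0$ has a solution with $x,y$ independent over $\F_q$. So I need to exhibit $(a,b,c)$ for which $D(x,y)\neq 0$ for all such pairs — a point whose "directions" to $L_\cU$ never align.

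The key idea I would pursue is an algebraic-geometry/character-sum count: the set of secant lines has size at most $\binom{(q^4-1)/(q-1)}{2}\sim q^6/2$, while $\PG(2,q^4)$ has $\sim q^8$ points, so secant lines cannot possibly cover everything on cardinality grounds alone — but this crude count already shows $1$-saturating fails for a scattered rank-$4$ set. Wait: that would make the theorem trivial. The subtlety must be that $\cU$ need not be scattered, so $L_\cU$ can be much larger than expected, or rather a non-scattered rank-$4$ linear set could have a line of high weight through which many secants pass. So the real content is ruling out, for $q$ large even, the non-scattered configurations — in particular a rank-$4$ linear set containing a line of weight $3$ (a "club"-type or sublinear configuration) plus extra points, whose secants could conceivably cover $\PG(2,q^4)$. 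I would classify rank-$4$ linear sets in $\PG(2,q^4)$ by the weight distribution of lines, and for each type estimate the union of secant lines. The scattered case is handled by the counting bound above; the case with a line $\ell$ of weight $3$ reduces to understanding secants through points of $\ell$ together with the finitely many remaining points, and one shows the union still omits a point, using that a weight-$3$ restriction to a line is governed by a $q$-polynomial of $q$-degree $\le 2$ over $\F_{q^4}$ whose structure (again for $q$ large) is constrained — this is where a Weil-type bound on the number of $\F_{q^4}$-rational points of an associated algebraic curve enters, and where the hypothesis "$q$ even and large enough" does the work (controlling an irreducibility/absolute-irreducibility argument for the curve $D(x,y)=0$, or for the curve parametrizing bad directions).

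The main obstacle, and the step I expect to be hardest, is the uniform control over all non-scattered rank-$4$ configurations: showing that in every such case the secant variety of $L_\cU$ has an $\F_{q^4}$-point in its complement. Concretely I anticipate the crux is proving that a certain plane algebraic curve $\mathcal{C}_{a,b,c}: D_{a,b,c}(x,y)=0$ (or its projective closure / the associated surface as $(a,b,c)$ varies) is absolutely irreducible of bounded degree for a generic choice of $(a,b,c)$, so that by Hasse--Weil it has $\approx q^4$ points, but all "trivial" points (those with $x,y$ $\F_q$-dependent, or $x=0$ or $y=0$) number only $O(q^2)$, hence for $q$ large there could be a genuine secant — so I'd need the reverse: show that for a positive-density set of $(a,b,c)$, the curve $\mathcal{C}_{a,b,c}$ has \emph{only} trivial points, which forces a factorization argument (the curve splits into components of the trivial locus), and this factorization fails for generic parameters unless the configuration is scattered, contradiction. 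Making this dichotomy precise — generic point has no secant vs. the linear set is forced to be of a very special type already excluded — and extracting the explicit threshold on $q$ from the Lang--Weil or Hasse--Weil constants (which depend only on the bounded degree), is the technical heart; the even characteristic hypothesis likely simplifies the relevant $q$-polynomial normal forms (e.g. avoiding the $p=2$ vs odd dichotomy in trace conditions) and should be used to pin down the curve's equation in a clean enough form to verify absolute irreducibility.
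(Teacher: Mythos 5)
Your overall reduction (show no rank-$4$ subspace of $V(3,q^4)$ is rank $2$-saturating, write $\cU=\{(x,f(x),g(x))\}$ with $f,g$ linearized, and express $1$-saturation of a point as the vanishing of a $3\times 3$ determinant $D(x,y)$) matches the paper's setup, but the core of your plan rests on a counting error that mislocates the difficulty. You claim the scattered case is settled ``on cardinality grounds'' because there are only about $q^6/2$ secant lines versus about $q^8$ points of $\PG(2,q^4)$; this comparison forgets that each secant line contains $q^4+1$ points, so the union of the secant lines can cover up to roughly $q^{10}/2$ points, far more than $q^8$. There is no trivial obstruction, and indeed for other parameters scattered-type linear sets of comparable rank \emph{are} $2$-saturating (this is how several equalities $s_{q^{2r}/q}(3,2)=r+2$ are obtained). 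Consequently your proposed dichotomy --- ``scattered is easy, the real content is the non-scattered configurations with a line of weight $3$'' --- does not address the actual problem: in the paper the easy cases are three normal forms disposed of by exhibiting a point from which the projection of $L_U$ is scattered (Remark \ref{rem:projection}), and the hard case is the generic two-parameter family $U_{\alpha,\beta}$ with $\alpha^{q+1}=1$, $\beta^{(q^2+1)(q-1)}=1$, which is not a degenerate configuration at all.

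There is also a directional confusion in your use of Lang--Weil. You want to show that for a positive-density set of points $(a,b,c)$ the curve $D_{a,b,c}(x,y)=0$ has \emph{only} trivial points, but Lang--Weil produces points on absolutely irreducible varieties; it cannot by itself exclude them, and your suggested ``factorization into the trivial locus'' argument is left entirely unspecified and conflicts with your earlier (incorrect) dismissal of the scattered case. In the paper the roles are the reverse of what you sketch: Lang--Weil is applied to an auxiliary hypersurface $f_{\alpha,\beta}(C)=0$ (after descending to $\F_q$ via a normal basis and checking absolute irreducibility) to guarantee the existence of suitable parameters $C$, i.e.\ candidate points $P_C=(0:1:C)$; then, for such $C$, the nonexistence of a secant through $P_C$ is proved by a closed computation with linearized polynomials --- eliminating variables, reducing to equations $L_1(y)=L_2(y)=0$ and $M(y)=0$, and showing the associated Dickson matrix determinants ($d_{\alpha,\beta}(C)$, $g_{\alpha,\beta}(C)$) are nonzero, so only the zero solution survives. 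Without this explicit elimination step (or a substitute for it), your outline does not yield a proof, and the hypothesis ``$q$ even'' is used throughout those computations, not merely to ``simplify normal forms.''
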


Since most of the calculations and results will be based on the parity of $q$, let us \emph{suppose, from now on, that $q$ is even}.

We want to show that for any $\F_q$-linear set $L_U$ of rank $4$ in $\mathrm{PG}(2,q^4)$, there exists a point $P\notin L_U$ for which do not pass any secant line to $L_U$. The proof of the theorem will be divided into four lemmas. The proof of the last three is very technical and it is left to the Appendix.

\medskip

Let $X_0,X_1,X_2$ be homogeneous projective coordinates in $\PG(2,q^4)$ and let $(a_0:a_1:a_2)$ denote the coordinates of a point of the plane.  We will use frequently the following general result.

\begin{remark}\label{rem:projection}
Let $L_U$ be a linear set in $\mathrm{PG}(k-1,q^m)$, $H$ a hyperplane and $P$ a point not belonging to $L_U$ nor to $H$. If the projection of $L_U$ from $P$ to $H$ is scattered, then the point is not $1$-saturated, because otherwise in the projection we would find a point of weight at least $2$.
\end{remark}

\begin{lemma}
Let $L_U$ be an $\F_q$-linear set of rank $4$ in $\mathrm{PG}(2,q^4)$. If $U$ is rank $2$-saturating, then, up to $\mathrm{GL}(3,q^4)$-equivalence,
\[U=U_{\alpha,\beta}=\left\{(x,x^q+\alpha x^{q^3},x^{q^2}+\beta x^{q^3})^T:x\in\F_{q^4}\right\},\]
with $\alpha\in\F_{q^2}$, $\beta\in\F_{q^4}$ such that $\alpha^{q+1}=1$ and $\beta^{(q^2+1)(q-1)}=1$.
\end{lemma}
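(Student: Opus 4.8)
The plan is to pin down the structure of a rank-$2$ saturating system $U$ of rank $4$ in $V(3,q^4)$ by a sequence of normalizations under $\mathrm{GL}(3,q^4)$. First I would observe that since $L_U$ has rank $4$ in a plane, it must be scattered: if it were not, then some point of $L_U$ would have weight $\geq 2$, so $U$ would lie (after a change of coordinates) in a hyperplane structure that cannot be $2$-saturating — more precisely, a non-scattered $L_U$ of rank $4$ spans at most a line or has a heavy point, and in either case one can exhibit a point $P$ off $L_U$ through which no secant passes, using Remark \ref{rem:projection}. So $U$ is a scattered $\F_q$-subspace of $\F_{q^4}^3$ of rank $4$, hence by the theory of scattered subspaces it is $\mathrm{GL}(3,q^4)$-equivalent to the graph of an $\F_q$-linear map, i.e. $U=\{(x, f(x), g(x))^T : x\in\F_{q^4}\}$ for $\F_q$-linearized polynomials $f,g$ of $q$-degree $\leq 3$, and after absorbing the linear part we may take $f(x)=a_1x^q+a_2x^{q^2}+a_3x^{q^3}$ and $g(x)=b_1x^q+b_2x^{q^2}+b_3x^{q^3}$ (the constant-in-$x^{q^0}$ terms being removed by adding multiples of the first coordinate).

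Next I would use the residual $\mathrm{GL}$-freedom that fixes the "graph" shape — namely substitutions $x\mapsto \lambda x$ together with scalings of the second and third coordinates, and the Frobenius-twist symmetry — to normalize the surviving coefficients. The key constraint is the saturating hypothesis. The clean way to exploit it is: $U$ is $2$-saturating iff every point $P\notin L_U$ lies on a secant line to $L_U$, equivalently (Remark \ref{rem:projection}) the projection of $L_U$ from any external point $P$ to a line is \emph{not} scattered. Projecting from $P$ kills one coordinate and produces a rank-$4$ $\F_q$-linear set on $\mathrm{PG}(1,q^4)$; such a set is non-scattered exactly when the associated linearized polynomial (a binary form in the projection) has a nontrivial $\F_q$-kernel of the right shape. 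Translating "for all $P$" into conditions on $(a_1,a_2,a_3,b_1,b_2,b_3)$ will force many of them to vanish and will impose norm-type equations on the rest — this is where the forms $x^q+\alpha x^{q^3}$ and $x^{q^2}+\beta x^{q^3}$ should emerge, with $a_2=b_1=0$ after normalization and $a_1,b_3$ scaled to $1$, leaving $\alpha=a_3$ and $\beta=b_2$.

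Then I would derive the arithmetic conditions $\alpha\in\F_{q^2}$ with $\alpha^{q+1}=1$ and $\beta\in\F_{q^4}$ with $\beta^{(q^2+1)(q-1)}=1$. These should come from demanding that $U$ actually be scattered \emph{and} that the projection from each external point fails to be scattered — i.e. from requiring the relevant Dickson/Moore-type determinants or resultant conditions to have the correct rank behavior. Concretely, scatteredness of $U_{\alpha,\beta}$ is equivalent to a condition on when $x, x^q+\alpha x^{q^3}, x^{q^2}+\beta x^{q^3}$ are $\F_{q^m}$-proportional, which is governed by a $3\times 3$ Moore-like determinant; the saturating condition adds the requirement that for every external $P$ the corresponding $2\times 2$ reduction degenerates. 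Writing both out and comparing exponents modulo $q^4-1$ yields precisely the multiplicative-order conditions stated. I would also need to check that these normalizations are \emph{exhaustive}, i.e. that the remaining stabilizer cannot further reduce $(\alpha,\beta)$ in a way that changes their orbit count — but the lemma only claims this normal form exists up to equivalence, so it suffices to produce it.

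The main obstacle I expect is the bookkeeping in the second step: carefully characterizing, for \emph{every} external point $P=(a_0:a_1:a_2)$, when the projection of $L_{U}$ from $P$ is scattered, and extracting from the (quantified-over-$P$) non-scatteredness the vanishing $a_2=b_1=0$ plus the norm conditions. This is a genuinely computational argument — one has to parametrize $P$, write the projected linearized polynomial, compute its "scatteredness defect" (a determinant in the $q$-powers, a polynomial identity in $x$ over $\F_{q^4}$), and then quantify. It is plausible that one reduces to finitely many cases for the support of $(a_i,b_i)$ and treats each; the degenerate supports should be eliminated because they make $U$ non-scattered or make $L_U$ span less than the plane, contradicting the standing hypotheses. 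Once the support is nailed down, the norm conditions $\alpha^{q+1}=1$, $\beta^{(q^2+1)(q-1)}=1$ fall out of a single resultant/gcd computation, which I would state but not grind through here. Given the remark that "the proof of the last three [lemmas] is very technical and left to the Appendix," I would expect the authors to isolate exactly this bookkeeping there.
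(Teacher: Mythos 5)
There is a genuine gap, on two levels. First, your opening reduction is unjustified: you claim that a rank-$2$-saturating $U$ must be scattered because ``a non-scattered $L_U$ of rank $4$ spans at most a line or has a heavy point, and in either case one can exhibit a point $P$ off $L_U$ through which no secant passes.'' The weight-$\geq 3$ case does force $L_U$ into a line, but the statement for a weight-$2$ point is precisely the kind of assertion that needs a proof, and the paper never proves (or uses) it in this generality: it only disposes of one specific non-scattered example (the shape $(x,x^q+x^{q^3},x^{q^2})$) by a tailored argument, projecting from $(1:0:0)$ and checking that the image has a unique point of weight $2$. Note also that your ``equivalently'' is only one implication: a heavy point of $L_U$ makes every projection non-scattered without producing any secant, so ``projection from $P$ non-scattered'' does not imply ``$P$ is $1$-saturated''; only the direction of Remark~\ref{rem:projection} is available. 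Moreover scatteredness is not what you need for the graph form $U=\{(x,f(x),g(x))^T\}$ — that only requires a hyperplane of weight $0$, which exists for rank $4$ in $\PG(2,q^4)$ by counting — so this step is both unproved and unnecessary.

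Second, the place you expect the conditions on $\alpha,\beta$ to come from is wrong, and the actual core of the argument is deferred rather than done. The conditions $\alpha\in\F_{q^2}$ and $\beta^{(q^2+1)(q-1)}=1$ are not $\mathrm{GL}$-invariant and cannot be ``derived from demanding that $U$ be scattered and every projection be non-scattered''; in the paper they are pure normalizations coming from the substitution $x\mapsto\lambda x$ (using $\F_{q^4}^*=\{\alpha z:\alpha\in\F_{q^2}^*,\,z^{q^2+1}=1\}$ and the analogous factorization for $\beta$, valid for $q$ even), and the only condition forced by saturation is $\mathrm{N}_{q^4/q}(\alpha)=1$ (i.e.\ $\alpha^{q+1}=1$ once $\alpha\in\F_{q^2}$), obtained by projecting from $(0:0:1)$. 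What actually carries the lemma is the case analysis you skip: reducing the pair $(f,g)$ to the four canonical shapes $(x,x^q,x^{q^2})$, $(x,x^q+\alpha x^{q^2},x^{q^3})$, $(x,x^q+\alpha x^{q^3},x^{q^2})$, $(x,x^q+\alpha x^{q^3},x^{q^2}+\beta x^{q^3})$, and eliminating the first three by exhibiting explicit external points with scattered projections — in particular the delicate subcase $\alpha^{q+1}=1$, $\alpha\neq 1$ of the third shape, where one projects from $(1:0:\omega)$ with $\omega^q+\omega+\alpha\neq 0$ and verifies a norm condition. You defer all of this as ``bookkeeping'' you ``would state but not grind through,'' and you even misplace it: the appendix contains the proofs of the later lemmas (that $U_{\alpha,\beta}$ is \emph{not} $2$-saturating), not of this one, so the computation you are leaning on is exactly the content this lemma's proof must supply.
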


\begin{proof}
Note that we can always assume that $U=\{(x,f(x),g(x))^T:x\in\F_{q^4}\}$, with $f,g$ two $\F_q$-linear maps of $\F_{q^4}$. Up to $\mathrm{GL}(3,q^4)$-equivalence, one of the following cases occur

\begin{itemize}
    \item[$1)$] $U=\left\{(x,x^q,x^{q^2})^T:x\in\F_{q^4}\right\}$;
    \item[$2)$] $U_{\alpha}=\left\{(x,x^q+\alpha x^{q^2}, x^{q^3})^T:x\in\F_{q^4}\right\}$, with $\alpha\in\F_{q^4}^*$;
\item[$3)$] $U_{\alpha}=\left\{(x,x^q+\alpha x^{q^3},x^{q^2})^T:x\in\F_{q^4}\right\}$ with $\alpha\in\F_{q^4}^*$;
    \item[$4)$] $U_{\alpha,\beta}=\left\{(x,x^q+\alpha x^{q^3},x^{q^2}+\beta x^{q^3})^T:x\in\F_{q^4}\right\}$ with $\alpha,\beta\in\F_{q^4}^*$;
\end{itemize}

\fbox{Case 1)} (resp. \fbox{Case 2)}). By projecting $L_U$ (resp. $L_{U_\alpha}$) 
from the point $(0:0:1)$ (resp. $(0:1:0)$) to the line with equation $X_2=0$ (resp. $X_1=0$), we obtain the set $$\left\{(x:x^q):x\in\F_{q^4}\right\},$$
(resp. $\{(x:x^{q^3}):x\in\F_{q^4}\}$)
which  is scattered and thus the point $(0:0:1)$ (resp. $(0:1:0)$) 
 is not saturated, since through such a point there does not pass any secant line to $L_U$ (resp. $L_{U_\alpha}$). 

\fbox{Case 3)}
First, by substituting $x$ by $\lambda x$ with $\lambda\in\F_{q^4}^*$ and dividing by $\lambda^q$, the $\F_q$-subspace $U_{\alpha}$ is equivalent to
$$\left\{(x,x^q+\alpha \lambda^{q^3-q} x^{q^3},x^{q^2}):x\in\F_{q^4}\right\}$$ and
since 
$$\F_{q^4}^*=\left\{\alpha z\colon \alpha\in\F_{q^2}^*, z\in\F_{q^4}, z^{q^2+1}=1\right\},$$ we have that $\alpha$ can be chosen in $\F_{q^2}^*$. 

If $\alpha^{q+1}\neq 1$ then by projecting $L_U$ (resp. $L_{U_\alpha}$) 
from the point $(0:0:1)$  to the line with equation $X_2=0$, we obtain the set $$\left\{(x:x^q+\alpha \lambda^{q-q^3} x^{q^3}):x\in\F_{q^4}\right\},$$
which  is scattered and thus the point $(0:0:1)$ is not saturated. 

Consider the case $\alpha = 1$. Note that the point $(1:0:1)$ is of weight $2$. By projecting $L_U$ from the point $(1:0:0)$  to the line with equation $X_0=0$, we obtain the set $$\Lambda :=\left\{(x^q+ x^{q^3}:x^{q^2}):x\in\F_{q^4}\right\}=\left\{(x^q+ x^{q^3}:x):x\in\F_{q^4}\right\}.$$ 
Now, $(0:1)$ is the unique point of $\Lambda $ of weight $2$ and this means that all the lines through $(1:0:0)$ intersect $L_U$ in a unique point and thus $L_U$ is not saturating.

Suppose now $\alpha^{q+1}=1$ and $\alpha \neq 1$. Let $\omega \in \mathbb{F}_{q^2}$ such that $\omega^q+\omega+\alpha\neq 0$. By projecting $L_U$ from the point $(1:0:\omega)$  to the line with equation $X_0=0$, we obtain the set $$\Lambda :=\left\{(x^q+\alpha x^{q^3}:x^{q^2}+\omega x):x\in\F_{q^4}\right\}.$$
The function $x\mapsto x^q+\alpha x^{q^3}$ is a bijection, whose inverse is $x\mapsto \frac{\alpha}{1+\alpha^2}x^{q}+\frac{\alpha^2}{1+\alpha^2} x^{q^3}$. This means that the set $\Lambda$ is equivalent to 
$$\left\{\left(x:\frac{\alpha}{1+\alpha^2}x^{q^3}+\frac{\alpha^2}{1+\alpha^2} x^{q}+\omega \frac{\alpha}{1+\alpha^2}x^{q}+\omega\frac{\alpha^2}{1+\alpha^2} x^{q^3}\right):x\in\F_{q^4}\right\},$$
and so 
$$\Lambda \simeq \left\{(x:(\alpha+1)x^{q^3}+(\omega+\alpha)x^q ):x\in\F_{q^4}\right\},$$
which is scattered if and only if ${\rm N}_{q^4/q}((\omega+\alpha)/(\omega\alpha+1))\neq 1$. 

Since both $\alpha$ and $\omega$ belong to $\mathbb{F}_{q^2}$, the previous condition is equivalent to 
$$\frac{(\omega+\alpha)(\omega^q+1/\alpha)}{(\omega\alpha+1)(\omega^q/\alpha+1)}\neq 1,$$
that is 
$$\omega^q+\omega+\alpha\neq 0.$$
Thus $(1:0:\omega)$ is not saturated.

\fbox{Case 4)} By substituting $x$ by $\lambda x$ with $\lambda\in\F_{q^4}^*$ and dividing by $\lambda^q$, the $\F_q$-subspace $U_{\alpha,\beta}$ is equivalent to
$$\left\{(x,x^q+\alpha \lambda^{q^3-q} x^{q^3},x^{q^2}+\beta\lambda^{q^3-q^2} x^{q^3}):x\in\F_{q^4}\right\}$$ and since 
$$\F_{q^4}^*=\left\{\alpha z\colon \alpha\in\F_{q^2}^*, z\in\F_{q^4}, z^{q^2+1}=1\right\},$$ we have that $\alpha$ can be chosen in $\F_{q^2}^*$. Also, substituting $x$ by $\lambda x$ with $\lambda\in\F_{q^2}^*$ and dividing by $\lambda$, the $\F_q$-subspace $U_{\alpha,\beta}$, with $\alpha\in\F_{q^2}$ is equivalent to $$\left\{(x,x^q+\alpha  x^{q^3},x^{q^2}+\beta\lambda^{q-1} x^{q^3}):x\in\F_{q^4}\right\}$$ and since 
$$\mathbb{F}_{q^4}^*=\left\{\beta z\colon \beta\in\F_{q^4}, z\in\F_{q^2}, \beta^{(q^2+1)(q-1)}=1, z^{q+1}=1 \right\},$$ we have that $\beta$ can be chosen in $\mathbb{F}_{q^4}$ such that $\beta^{(q^2+1)(q-1)}=1$. Finally, by projecting  $L_{U_{\alpha,\beta}}$ from the point $(0:0:1)$ to the line with equation $X_2=0$, we have that the set $$\left\{(x:x^q+\alpha x^{q^3}):x\in\F_{q^4}\right\}$$ is scattered if and only if $\mathrm{N}_{q^4/q}(\alpha) \ne 1$.  This means that if $\alpha\in\F_{q^2}$ and $\alpha^{q+1}\ne 1$, then the point $(0:0:1)$ is not $1$-saturated. Hence we can reduce to the study of the $\F_q$-subspace $U_{\alpha,\beta}$ with $\alpha\in\F_{q^2}$, $\beta\in\F_{q^4}$ such that $\alpha^{q+1}=1$ and $\beta^{(q^2+1)(q-1)}=1$.
\end{proof}

\begin{lemma}\label{lem:first-case}
Let $\alpha^{q+1}=1$, $\beta^{(q^2+1)(q-1)}=1$, with $\alpha \neq 1$. If $q$ is large enough, then $U_{\alpha,\beta}$ is not $2$-saturating.
\end{lemma}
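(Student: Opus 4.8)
The strategy is to mimic the successful parts of Case~3 in the previous lemma: exhibit, for $U_{\alpha,\beta}$ with $\alpha\neq 1$ and $\alpha^{q+1}=1$, a point $P=(a_0:a_1:a_2)\notin L_{U_{\alpha,\beta}}$ together with a hyperplane $H$ (a line of $\PG(2,q^4)$) such that the projection of $L_{U_{\alpha,\beta}}$ from $P$ onto $H$ is scattered; by Remark~\ref{rem:projection} this forces $P$ not to be $1$-saturated, hence $U_{\alpha,\beta}$ not $2$-saturating. First I would parametrize the projection. Projecting from $P=(1:0:\omega)$ (for a parameter $\omega$ to be chosen) onto the line $X_0=0$ sends a point $\langle(x,\,x^q+\alpha x^{q^3},\,x^{q^2}+\beta x^{q^3})\rangle$ to $\langle(x^q+\alpha x^{q^3}\, :\, x^{q^2}+\beta x^{q^3}-\omega x)\rangle$. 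So the projected set is $\Lambda=\{(x^q+\alpha x^{q^3}: x^{q^2}+\beta x^{q^3}-\omega x):x\in\F_{q^4}\}$.

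Next I would linearize. Since $\alpha^{q+1}=1$ forces $\mathrm{N}_{q^4/q}(\alpha)=1$ but $\alpha\neq 1$, the $\F_q$-linear map $x\mapsto x^q+\alpha x^{q^3}$ is actually a bijection of $\F_{q^4}$ (its kernel is trivial precisely when $\mathrm N_{q^4/q}(\alpha)\neq 1$ — wait, here one must be careful: $\alpha^{q+1}=1$ with $\alpha\in\F_{q^2}$ gives $\mathrm N_{q^4/q}(\alpha)=\alpha^{(q^2+1)(q+1)/\gcd}$... I would recompute this invertibility condition exactly, as the paper does in Case~3 where $\alpha=1$ is the degenerate value). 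Assuming invertibility, I substitute $y=x^q+\alpha x^{q^3}$, express $x$ as an explicit $\F_q$-linearized polynomial in $y$ (a $q$-polynomial of $q$-degree $\le 3$ with coefficients rational in $\alpha$), and substitute back, obtaining $\Lambda\simeq\{(y: L_\omega(y)):y\in\F_{q^4}\}$ where $L_\omega$ is an explicit $\F_q$-linearized polynomial $L_\omega(y)=c_0 y+c_1 y^q+c_2 y^{q^2}+c_3 y^{q^3}$ whose coefficients $c_i=c_i(\alpha,\beta,\omega)$ are rational functions of $\alpha,\beta,\omega$. Such a set is scattered if and only if the associated $\F_q$-linear map $y\mapsto L_\omega(y)$ has the property that $L_\omega(y)/y\notin\F_q^*\cdot$(anything giving a repeated weight) — concretely, scatteredness of $\{(y:L_\omega(y))\}$ is equivalent to: for every $\lambda\in\F_{q^4}$, $\dim_{\F_q}\ker(L_\omega-\lambda\,\mathrm{id})\le 1$, i.e. the Dickson matrix of $L_\omega-\lambda\,\mathrm{id}$ has rank $\ge 3$ for all $\lambda$.

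The heart of the argument is then to show that, for $q$ large, there exists a choice of $\omega$ (in $\F_{q^4}$, or perhaps $\F_{q^2}$, mirroring Case~3) making $L_\omega$ scattered. I would set this up as follows: the ``bad'' $\omega$'s are those for which $L_\omega-\lambda\,\mathrm{id}$ drops rank to $\le 2$ for some $\lambda$; after clearing denominators, the existence of such a $\lambda$ is equivalent to the vanishing of a certain resultant/determinantal condition, which defines an algebraic hypersurface (a curve, after eliminating $\lambda$) in the $\omega$-plane. Counting $\F_q$-rational points of this curve via the Lang--Weil / Hasse--Weil bound, the number of bad $\omega\in\F_{q^4}$ is $O(q^{3})$ or less while the total is $q^4$, so for $q$ large a good $\omega$ exists — provided the curve is not the whole plane, i.e.\ provided $L_\omega$ is \emph{not identically non-scattered}. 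This non-degeneracy — showing the defining polynomial in $\omega$ is not the zero polynomial, equivalently that $U_{\alpha,\beta}$ is not so special that \emph{every} projection from a point of this pencil is non-scattered — is the main obstacle, and I expect it requires an explicit ``generic'' substitution (a clever specific value of $\omega$, or a degree/leading-coefficient computation on the eliminated curve) together with the hypothesis $\alpha\neq 1$, which is exactly what rules out the degenerate subcase that occurred in Case~3. The technical residue — writing out $L_\omega$, forming the $4\times 4$ Dickson matrix, taking the $3\times 3$ minors, eliminating $\lambda$, bounding the genus of the resulting curve — is routine but lengthy, which is why (as the paper states) it is deferred to the Appendix.
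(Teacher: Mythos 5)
Your overall strategy (find a point $P$ not $1$-saturated, via Remark \ref{rem:projection}) is legitimate, and your reduction of scatteredness of $\{(y:L_\omega(y))\}$ to the condition $\dim_{\F_q}\ker(L_\omega-\lambda\,\mathrm{id})\le 1$ for all $\lambda$ is correct, as is the observation that $x\mapsto x^q+\alpha x^{q^3}$ is invertible when $\alpha^{q+1}=1$, $\alpha\neq 1$. However, there is a genuine gap exactly where you place "the main obstacle": you never establish that the bad locus of $\omega$ is a \emph{proper} subvariety, i.e.\ that even a single $\omega$ makes the projected linear set scattered. Without that non-degeneracy statement the Lang--Weil count is vacuous, and it cannot be waved away as a routine leading-coefficient check: after inverting $x\mapsto x^q+\alpha x^{q^3}$ the map $L_\omega$ is a full four-term $q$-polynomial, and scatteredness of a rank-$4$ linear set in $\PG(1,q^4)$ is the \emph{maximum scattered} condition, a strong and decidedly non-generic property (deciding which $q$-polynomials are scattered is a hard open-ended problem). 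There is no reason to expect that "most" $\omega$ in your pencil work, and the elimination of $\lambda$ (eight affine $\F_q$-coordinates after restricting scalars for $(\omega,\lambda)$, sixteen dependent $3\times 3$ minors) could perfectly well project onto the whole $\omega$-space. So the heart of the lemma is missing, not merely deferred.

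It is instructive that the paper does \emph{not} follow your route for $\alpha\neq 1$ (it does use projection-scatteredness, but only in the later lemmas where $\alpha=1$). Instead it picks points $P_C=(0:1:C)$ with $C$ constrained to lie on the explicit hypersurface $f_{\alpha,\beta}(C)=0$ (plus non-vanishing conditions $d_{\alpha,\beta}(C)g_{\alpha,\beta}(C)\neq 0$ defining $\Gamma_{\alpha,\beta}$), proves $\Gamma_{\alpha,\beta}\neq\emptyset$ by a Lang--Weil argument applied to that hypersurface (Lemma \ref{Prop:Gamma}, where absolute irreducibility is checked because $f_{\alpha,\beta}$ has degree one in $T=C^{q^3}$), and then shows \emph{directly}, by an elimination with the conjugates $F_i$, $G_i$, $H_i$ and a Dickson-matrix determinant, that no secant line of $L_{U_{\alpha,\beta}}$ passes through $P_C$ — a weaker property than scatteredness of the projection, and one that crucially \emph{uses} the relation $f_{\alpha,\beta}(C)=0$ to collapse $M(y)$ into a square. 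In other words, the points that work form a codimension-one family, not a generic one; this is strong evidence that your expectation "generic $\omega$ in the pencil $(1:0:\omega)$ yields a scattered projection" may simply be false, and in any case it is the statement you would have to prove to complete the argument.
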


\begin{lemma}\label{Prop:alpha_1 beta_Fq}
Let $\alpha=1$, $\beta\in \mathbb{F}_q^*$. If $q\geq 64$ then $U_{\alpha,\beta}$ is not $2$-saturating.
\end{lemma}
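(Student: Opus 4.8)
The plan is to show that for $\alpha = 1$ and $\beta \in \mathbb{F}_q^*$, the linear set $L_{U_{\alpha,\beta}}$ in $\PG(2,q^4)$ fails to be $2$-saturating by exhibiting an explicit point $P \notin L_{U_{\alpha,\beta}}$ through which no secant line to $L_{U_{\alpha,\beta}}$ passes. Following the strategy established in the previous cases (Remark~\ref{rem:projection}), I would fix a hyperplane $H$ and a candidate point $P = (p_0:p_1:p_2)$ not on $H$ nor on $L_{U_{\alpha,\beta}}$, compute the projection $\overline{U}$ of $U_{1,\beta}$ from $P$ to $H$, and prove that $L_{\overline{U}}$ is scattered. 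Concretely, projecting from a point of the form $(1:0:\omega)$ (or a similar parametrized family) to the line $X_0 = 0$ produces a set of the shape $\{(x^q + x^{q^3} : x^{q^2} + \beta x^{q^3} + \omega x) : x \in \F_{q^4}\}$, which after composing with the inverse of the (bijective, since $\alpha=1$ forces the map $x \mapsto x^q + x^{q^3}$ to be a permutation) first coordinate map becomes equivalent to $\{(x : L_\omega(x)) : x \in \F_{q^4}\}$ for an explicit $\F_q$-linearized polynomial $L_\omega$ depending on $\beta$ and $\omega$.

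The key step is then to show that $L_\omega$ defines a scattered linear set for at least one admissible choice of $\omega \in \F_{q^4}$ (subject to $(1:0:\omega) \notin L_{U_{1,\beta}}$), i.e. that the associated linearized polynomial $x \mapsto L_\omega(x)$ has the property that $L_\omega(x)/x \in \F_q$ forces $x = 0$. Writing $L_\omega(x) = c_1 x^q + c_2 x^{q^2} + c_3 x^{q^3}$ with $c_i$ rational expressions in $\beta, \omega$, scatteredness is governed by whether the polynomial $L_\omega(x) - \lambda x$ has more than $q$ roots for some $\lambda \in \F_q$, equivalently whether a certain associated Dickson-type matrix (the $4\times 4$ matrix built from $c_1,c_2,c_3$ and their $q$-conjugates together with $\lambda$ on the diagonal) is singular for some $\lambda \in \F_q$. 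I would translate this into a norm/algebraic condition on $\beta$ and $\omega$ — analogous to the condition $\mathrm{N}_{q^4/q}((\omega+\alpha)/(\omega\alpha+1)) \neq 1$ that appeared in Case~3 — and then argue that this condition can be satisfied. Since there are $q^4$ choices of $\omega$ and only $O(q^3)$ of them can lie on $L_{U_{1,\beta}}$ or fail the scatteredness condition (each bad condition being the vanishing of a fixed nonzero polynomial of bounded degree in $\omega$ over $\F_{q^4}$), a counting argument guarantees a good $\omega$ exists once $q$ is large enough; the hypothesis $q \geq 64$ is the explicit threshold making this counting rigorous.

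Alternatively — and this may be cleaner given $\beta \in \F_q$ — I would try to pin down a single closed-form choice of $P$ depending algebraically on $\beta$, reducing the projected polynomial to something like $\{(x : a\, x^q + b\, x^{q^3}) : x \in \F_{q^4}\}$ with $a, b$ explicit in $\beta$, for which scatteredness is exactly $\mathrm{N}_{q^4/q}(b/a) \neq 1$; then the whole lemma reduces to verifying that a single rational function of $\beta$ is not a norm, which, for $\beta$ ranging over $\F_q^*$ and $q$ large, follows from a bound on the number of $\beta$ making it a norm (again an algebraic-geometry/character-sum estimate, or a direct Weil-type bound, giving the $q \geq 64$ cutoff). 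I would also need to handle the exceptional sub-cases where the naive projection point happens to lie on $L_{U_{1,\beta}}$ or where the first-coordinate map degenerates, by perturbing $P$ within the parametrized family.

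The main obstacle I expect is the scatteredness verification: controlling the number of $\omega \in \F_{q^4}$ (or $\beta \in \F_q^*$) for which the relevant linearized polynomial $L_\omega(x) - \lambda x$ acquires extra roots for \emph{some} $\lambda \in \F_q$. This is where the calculation becomes genuinely technical — one must analyze a family of $4 \times 4$ determinants (or equivalently a family of norm equations) parametrized by $\lambda \in \F_q$ and show that the union of the corresponding bad loci in the $\omega$-line has size $o(q^4)$. This is precisely the kind of estimate that the paper defers to the Appendix, and it is the reason the statement carries the hypothesis ``$q \geq 64$'' rather than ``$q$ arbitrary''. Everything else — the reduction to a projection, the use of Remark~\ref{rem:projection}, the bijectivity of $x \mapsto x^q + x^{q^3}$ when $\alpha = 1$, and the identification of the relevant norm condition — is routine manipulation following the template already set up in the proof of the first lemma.
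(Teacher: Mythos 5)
Your proposal follows the right general template (project from a candidate point, show the projection is scattered, then prove a good parameter exists for $q$ large), but it contains a step that fails outright. Since $q$ is even throughout this section, the map $x\mapsto x^q+x^{q^3}$ is \emph{not} a permutation of $\F_{q^4}$ when $\alpha=1$: its kernel is $\{x: x^{q^2}=x\}=\F_{q^2}$, of $\F_q$-dimension $2$ (note that the inverse used in Case 3 has denominator $1+\alpha^2$, which vanishes at $\alpha=1$). So you cannot compose with ``the inverse of the first coordinate map'' to reduce the projection to $\{(x:L_\omega(x))\}$. Worse, the whole family of projection points $(1:0:\omega)$ is systematically useless, not just for exceptional $\omega$: for $x\in\F_{q^2}$ the vectors of $U_{1,\beta}$ read $(x,\,0,\,x+\beta x^q)$, so the line $X_1=0$ meets $L_{U_{1,\beta}}$ in a rank-$2$ linear set which, since $\beta\neq 0$, contains at least two distinct points; hence $X_1=0$ is itself a secant line and \emph{every} point $(1:0:\omega)$ on it is already $1$-saturated. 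Perturbing $P$ within that family cannot repair this, and the same degeneration blocks your second variant (reducing to a two-term $a x^q+b x^{q^3}$ with a single norm condition).

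The paper's proof avoids this by projecting from points $(1:z:\beta)$ with $z\in\F_q$, so that the projected set is $\{(x^q+x^{q^3}+zx : x^{q^2}+\beta x^{q^3}+\beta x)\}$, whose first coordinate has (for suitable $z$) trivial kernel. Scatteredness is then not a single norm condition: it is established by a rank analysis of an explicit $4\times 4$ Dickson-type matrix via its $3\times 3$ minors $f_1,f_2$, elimination of $m^{q^3},m^{q^2}$, factorization, and resultants, and the admissible $z$ form a set $\Delta_0\subseteq\F_q$ defined by an Artin--Schreier trace condition together with the nonvanishing of an explicit polynomial $r_0(z)$ of bounded degree; nonemptiness of $\Delta_0$ for $q\geq 64$ follows from counting $\F_q$-rational points on an absolutely irreducible curve of small genus. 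This is exactly the technical core that your proposal defers as the ``main obstacle'': without it, and with the projection family you chose, the argument does not get off the ground.
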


\begin{lemma}\label{lem:second-case}
Let $\alpha=1$, $\beta\notin \mathbb{F}_q$. If $q\geq 64$ then $U_{\alpha,\beta}$ is not $2$-saturating.
\end{lemma}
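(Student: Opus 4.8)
\noindent\textit{Proof plan.}\quad
By Remark~\ref{rem:projection} it suffices, for every even $q\ge 64$ and every $\beta\in\F_{q^4}\setminus\F_q$ with $\beta^{(q^2+1)(q-1)}=1$, to exhibit a point $P\notin L_{U_{1,\beta}}$ and a line $\ell\not\ni P$ for which the projection of $L_{U_{1,\beta}}$ from $P$ onto $\ell$ is scattered; such a $P$ is not $1$--saturated, hence $U_{1,\beta}$ is not $2$--saturating. I would take $P$ in the pencil $P_\omega=(0:1:\omega)$, $\omega\in\F_{q^4}$, and $\ell\colon X_1=0$. Since a point $(0:y:z)$ of $L_{U_{1,\beta}}$ would force the defining element $x$ to be $0$, the linear set $L_{U_{1,\beta}}$ misses the line $X_0=0$; in particular $P_\omega\notin L_{U_{1,\beta}}$ for all $\omega$, and clearly $P_\omega\notin\ell$. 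A direct computation (in characteristic $2$) identifies the projection of $L_{U_{1,\beta}}$ from $P_\omega$ onto $\ell$ with the rank $4$ linear set of $\PG(1,q^4)$ associated with the $q$--polynomial
\[h_\omega(x)=(\beta+\omega)\,x^{q^{3}}+x^{q^{2}}+\omega\,x^{q}.\]
So everything reduces to the claim: \emph{for $q$ even, $q\ge 64$, and $\beta$ as above there is $\omega\in\F_{q^4}$ such that $h_\omega$ is scattered.}

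To settle this I would argue by elimination and counting. The polynomial $h_\omega$ is scattered exactly when, for every $c\in\F_{q^4}$, the $\F_q$--linear map $L_{\omega,c}\colon x\mapsto (\beta+\omega)x^{q^3}+x^{q^2}+\omega x^{q}+cx$ has $\F_q$--kernel of dimension at most $1$ in $\F_{q^4}$, i.e. its $4\times 4$ Dickson matrix $D(\omega,c)$ has rank $\ge 3$, i.e. not all $3\times 3$ minors of $D(\omega,c)$ vanish. Call $\omega$ \emph{bad} if $h_\omega$ is not scattered: then some $c$ kills all these minors, and treating $c,c^{q},c^{q^2},c^{q^3}$ as four unknowns in the resulting (highly overdetermined) system one eliminates them to get a polynomial relation $R(\omega,\omega^q,\omega^{q^2},\omega^{q^3})=0$ with coefficients in $\F_q[\beta,\beta^q,\beta^{q^2},\beta^{q^3}]$ and with bounded degree in its four arguments. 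Replacing the $\omega^{q^i}$ by the actual Frobenius powers, every bad $\omega$ becomes a root of a single polynomial $P_\beta(\omega)\in\F_{q^4}[\omega]$ of degree $O(q^3)$; making this degree bound explicit is what I expect to yield precisely the threshold $q\ge 64$. Since then $q^4>\deg P_\beta$, some $\omega$ is not bad, and we are done.

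The crux — technical enough to belong in the Appendix — is to show that the elimination does not collapse, i.e. that $R$, equivalently $P_\beta$, is not identically zero; equivalently, that the family $\{h_\omega\}$ really does contain a scattered member instead of consisting entirely of non--scattered polynomials. This is exactly where $\beta\notin\F_q$ is used: if $\beta\in\F_q$ its Frobenius conjugates coincide, several of the $3\times 3$ minors of $D(\omega,c)$ become proportional, the eliminated relation degenerates, and one has to argue differently (this is the content of Lemma~\ref{Prop:alpha_1 beta_Fq}); when $\beta\notin\F_q$ one verifies, over the rational function field $\F_q(\beta,\omega)$ or by a careful specialisation, that no single $c$ annihilates all the $3\times 3$ minors for a generic $\omega$. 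The remaining points are bookkeeping: one checks that the sub--loci swept under the rug by the clean count — the finitely many $\omega$ for which $h_\omega$ drops $q$--degree, and the $\omega$ for which $D(\omega,c)$ is structurally singular — contribute only $O(q^3)$ bad values of $\omega$ in total, so the inequality $\#\{\text{bad }\omega\}<q^4$ survives for $q\ge 64$. Should the pencil $P_\omega$ prove too rigid, the same scheme runs with the two--parameter family of centres $P=(1:t:s)$ projected onto $\ell\colon X_0=0$, replacing the univariate count by a count over an affine surface.
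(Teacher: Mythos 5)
Your plan reduces the lemma to finding $\omega\in\F_{q^4}$ with $h_\omega(x)=(\beta+\omega)x^{q^3}+x^{q^2}+\omega x^q$ scattered, and then proposes to bound the bad $\omega$'s by eliminating $c,c^q,c^{q^2},c^{q^3}$, treated as four independent unknowns, from the vanishing of all $3\times3$ minors of the Dickson matrix. This is exactly where the argument has a genuine gap, and the gap is not a formality you can defer: the rank-$\le 2$ locus in the space of $4\times 4$ matrices has codimension $4$, i.e.\ locally only four essential equations, which matches the number of free unknowns $C_0,\dots,C_3$; so for a fixed $\omega$ one expects solutions over the algebraic closure for \emph{every} $\omega$, in which case the elimination ideal in the $\omega$-variables is zero and no relation $R$, hence no $P_\beta$, exists. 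The true scarcity of bad $\omega$ comes from insisting that $c$ lie in $\F_{q^4}$ with the genuine Frobenius links $C_i=C_0^{q^i}$, which are relations of degree $q^i$ and are not captured by the bounded-degree elimination you describe; once they are used honestly, neither the claimed degree $O(q^3)$ of $P_\beta$ nor the threshold $q\ge 64$ follows without substantial work. The paper's own handling of the very pencil $(0:1:C)$ you choose (Lemma \ref{lem:first-case}, for $\alpha\neq 1$) shows what that work looks like: an explicit step-by-step elimination exploiting the secant condition, followed by a Lang--Weil argument on an absolutely irreducible $\F_q$-rational hypersurface in the coordinates of $C$ over a normal basis, and even then the conclusion is only ``$q$ large enough'', not an explicit $64$. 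Moreover, the auxiliary polynomials there ($d_{\alpha,\beta}$, $g_{\alpha,\beta}$) have leading terms carrying factors $\alpha+1$ and $\alpha^4+1$, which vanish at $\alpha=1$ in characteristic $2$; this degeneration is precisely why the paper abandons that pencil in the case $\alpha=1$, and you offer no evidence that a scattered $h_\omega$ exists at all for every admissible $\beta\notin\F_q$.

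The explicit threshold $q\ge 64$ in the statement is obtained by the paper through a different device, which your plan misses: for $\alpha=1$ the projection centre is taken to be $(1:z:\beta)$ with the free parameter $z$ restricted to the base field $\F_q$. Then $z^{q^i}=z$, so every condition arising in the rank analysis of the associated $4\times4$ matrix is a polynomial in $z$ of degree independent of $q$ (the worst being a resultant of degree $38$), and a good $z\in\Delta_1$ exists by naive root counting as soon as $q\ge 64$. In short: your reduction to scatteredness of a projection is the same first move as the paper's, but the central step — proving that a good projection centre exists, with an explicit bound on $q$ — is asserted rather than proved, the proposed elimination is in real danger of collapsing for dimension-count reasons, and the numerology $q\ge 64$ is not accounted for by your counting scheme.
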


Theorem \ref{TH} now follows from the above three lemmas, whose proofs are in the Appendix.

\section{Conclusion and open problems}\label{sec:conclusion}

In this paper we continued the investigation of saturating linear sets of small rank introduced in \cite{bonini2022saturating}. In the following remark we resume all the parameters for which we know the exact value of $s_{q^m/q}(k,\rho)$.

\begin{remark}
The lower bound \eqref{eq:lb} is met in the following cases: 
\begin{align*}
    s_{q^m/q}(\rho t,\rho)&=m(t-1)+\rho, & \text{ for all } t\geq 1,m\geq 2,\\
    s_{q^2/q}(k,2)&=k, & \text{ for all } k\geq 2,\\
    s_{q^3/q}(3,2)&=4, & \\
    s_{q^{2r}/q}(3,2) & = r+2, & \text{ for } r\neq 3,5\bmod 6\text{ and }r\geq 4,\\
    s_{q^{2r}/q}(3,2) & = r+2, & \text{ for } \gcd(r,(q^{2s}-q^s+1)!)=1, r\text{ odd}, 1\leq s\leq r, \gcd(r,s)=1,
    \\
    s_{q^{10}/q}(3,2) & = 7, & \text{ for } q=p^{15h+s}, p\in\{2,3\}, \gcd(s,15)=1 \text{ and for } q=5^{15h+1},\\
    s_{q^{10}/q}(3,2) & = 7, & \text{ for } q \text{\ odd}, q= 2,3\bmod 5 \text{ and for } q=2^{2h+1}, h\geq 1,\\
    s_{q^{2r}/q}(3,2) & = r+2, & \text{ for } q \text{\ odd}, q= 2,3\bmod 5 \text{ and for } q=2^{2h+1}, h\geq 1, r\text{ odd},\\
    s_{q^{2r}/q}(2r,2r-1) & =2r+1, & \text{ for all }r\geq 2.
\end{align*}

As shown in Section \ref{sec:non-tightness}, there are some parameters for which \eqref{eq:lb} is not tight:  for $q$  even and large enough, $s_{q^4/q}(3,2)=5>4$.
\end{remark}

The theory of saturating linear sets of small rank needs surely further investigations. We propose some open questions.

\begin{question}
In Section \ref{sec:non-tightness} we showed that $s_{q^4/q}(3,2)=5$ for $q$ even and large enough. Can we prove the same result for $q$ odd? 
\end{question}

The answer to the above question would need to adapt all results and calculations to the odd characteristic. A more ambitious result would be a general answer to the following.

\begin{question}
Is is possible to characterize the parameters for which the bound is not tight?  
\end{question}

A generalized version of Remark \ref{rem:projection} would probably help in finding an answer, at least for some (small) dimensions.

About the constructions, a classification of saturating linear sets meeting the bound would be interesting. A possible step towards such classification would be the following.

\begin{question}
Can we find examples of rank $\rho$-saturating sets in $V(\rho t,q^m)$ of minimal $\F_q$-dimension non-equivalent to those in Theorem \ref{Th:main} and Remark \ref{Remark}? 
\end{question}

\section*{Acknowledgments}
This research was supported by the Italian National Group for Algebraic and Geometric Structures and their Applications (GNSAGA - INdAM). The second author was partially supported by the ANR-21-CE39-0009 - BARRACUDA (French \emph{Agence Nationale de la Recherche}).

\bibliographystyle{abbrv}
\bibliography{Biblio.bib}

\begin{thebibliography}{10}

\bibitem{alfarano2022linear}
G.~N. Alfarano, M.~Borello, A.~Neri, and A.~Ravagnani.
\newblock Linear cutting blocking sets and minimal codes in the rank metric.
\newblock {\em Journal of Combinatorial Theory, Series A}, 192:105658, 2022.

\bibitem{ball2000linear}
S.~Ball, A.~Blokhuis, and M.~Lavrauw.
\newblock Linear {($q$+1)}-fold blocking sets in {PG(2, $q^4$)}.
\newblock {\em Finite Fields and Their Applications}, 6(4):294--301, 2000.

\bibitem{Bartoli:2020aa4}
D.~Bartoli.
\newblock Hasse-weil type theorems and relevant classes of polynomial
  functions.
\newblock {\em London Mathematical Society Lecture Note Series, Proceedings of
  28th British Combinatorial Conference, Cambridge University Press}, to
  appear.

\bibitem{bartoli2021evasive}
D.~Bartoli, B.~Csajb{\'o}k, G.~Marino, and R.~Trombetti.
\newblock Evasive subspaces.
\newblock {\em Journal of Combinatorial Designs}, 29(8):533--551, 2021.

\bibitem{bartoli2018maximum}
D.~Bartoli, M.~Giulietti, G.~Marino, and O.~Polverino.
\newblock Maximum scattered linear sets and complete caps in {G}alois spaces.
\newblock {\em Combinatorica}, 38:255--278, 2018.

\bibitem{blokhuis2000scattered}
A.~Blokhuis and M.~Lavrauw.
\newblock Scattered spaces with respect to a spread in {PG($n$,$q$)}.
\newblock {\em Geometriae Dedicata}, 81:231--243, 2000.

\bibitem{MR0429903}
E.~Bombieri.
\newblock Counting points on curves over finite fields.
\newblock In {\em S\'eminaire Bourbaki : vol. 1972/73, expos\'es 418-435},
  number~15 in S\'eminaire Bourbaki. Springer-Verlag, 1974.
\newblock talk:430.

\bibitem{bonini2022saturating}
M.~Bonini, M.~Borello, and E.~Byrne.
\newblock Saturating systems and the rank covering radius.
\newblock {\em to appear in Journal of Algebraic Combinatorics}, 2023.

\bibitem{byrne2017covering}
E.~Byrne and A.~Ravagnani.
\newblock Covering radius of matrix codes endowed with the rank metric.
\newblock {\em SIAM Journal on Discrete Mathematics}, 31(2):927--944, 2017.

\bibitem{byrne2019assmus}
E.~Byrne and A.~Ravagnani.
\newblock An {A}ssmus--{M}attson theorem for rank metric codes.
\newblock {\em SIAM Journal on Discrete Mathematics}, 33(3):1242--1260, 2019.

\bibitem{MR2206396}
A.~Cafure and G.~Matera.
\newblock Improved explicit estimates on the number of solutions of equations
  over a finite field.
\newblock {\em Finite Fields and their Applications}, 12(2):155--185, 2006.

\bibitem{cohen1997covering}
G.~Cohen, I.~Honkala, S.~Litsyn, and A.~Lobstein.
\newblock {\em Covering codes}.
\newblock Elsevier, 1997.

\bibitem{csajbok2017maximum}
B.~Csajb{\'o}k, G.~Marino, O.~Polverino, and F.~Zullo.
\newblock Maximum scattered linear sets and mrd-codes.
\newblock {\em Journal of Algebraic Combinatorics}, 46:517--531, 2017.

\bibitem{csajbok2021generalising}
B.~Csajb{\'o}k, G.~Marino, O.~Polverino, and F.~Zullo.
\newblock Generalising the scattered property of subspaces.
\newblock {\em Combinatorica}, 41(2):237--262, 2021.

\bibitem{davydov2003saturating}
A.~A. Davydov, S.~Marcugini, and F.~Pambianco.
\newblock On saturating sets in projective spaces.
\newblock {\em Journal of Combinatorial Theory, Series A}, 103(1):1--15, 2003.

\bibitem{davydov2000saturating}
A.~A. Davydov and P.~R. {\"O}sterg{\aa}rd.
\newblock On saturating sets in small projective geometries.
\newblock {\em European Journal of Combinatorics}, 21(5):563--570, 2000.

\bibitem{denaux2022constructing}
L.~Denaux.
\newblock Constructing saturating sets in projective spaces using
  subgeometries.
\newblock {\em Designs, Codes and Cryptography}, 90(9):2113--2144, 2022.

\bibitem{gadouleau2008packing}
M.~Gadouleau and Z.~Yan.
\newblock Packing and covering properties of rank metric codes.
\newblock {\em IEEE Transactions on Information Theory}, 54(9):3873--3883,
  2008.

\bibitem{gadouleau2009bounds}
M.~Gadouleau and Z.~Yan.
\newblock Bounds on covering codes with the rank metric.
\newblock {\em IEEE Communications Letters}, 13(9):691--693, 2009.

\bibitem{MR1988974}
S.~R. Ghorpade and G.~Lachaud.
\newblock \'{E}tale cohomology, {L}efschetz theorems and number of points of
  singular varieties over finite fields.
\newblock {\em Mosc. Math. J.}, 2(3):589--631, 2002.
\newblock Dedicated to Yuri I. Manin on the occasion of his 65th birthday.

\bibitem{MR1962145}
S.~R. Ghorpade and G.~Lachaud.
\newblock Number of solutions of equations over finite fields and a conjecture
  of {L}ang and {W}eil.
\newblock In {\em Number theory and discrete mathematics ({C}handigarh, 2000)},
  Trends Math., pages 269--291. Birkh\"{a}user, Basel, 2002.

\bibitem{gruica2022generalised}
A.~Gruica, A.~Ravagnani, J.~Sheekey, and F.~Zullo.
\newblock Generalised scattered subspaces.
\newblock {\em arXiv preprint arXiv:2207.01027}, 2022.

\bibitem{MR0463157}
R.~Hartshorne.
\newblock {\em Algebraic geometry}.
\newblock Springer-Verlag, New York-Heidelberg, 1977.
\newblock Graduate Texts in Mathematics, No. 52.

\bibitem{MR65218}
S.~Lang and A.~Weil.
\newblock Number of points of varieties in finite fields.
\newblock {\em American Journal of Mathematics}, 76:819--827, 1954.

\bibitem{lia2023short}
S.~Lia, G.~Longobardi, G.~Marino, and R.~Trombetti.
\newblock Short rank-metric codes and scattered subspaces.
\newblock {\em arXiv preprint arXiv:2306.01315}, 2023.

\bibitem{MR1429394}
R.~Lidl and H.~Niederreiter.
\newblock {\em Finite fields}, volume~20 of {\em Encyclopedia of Mathematics
  and its Applications}.
\newblock Cambridge University Press, Cambridge, second edition, 1997.
\newblock With a foreword by P. M. Cohn.

\bibitem{lunardon1999normal}
G.~Lunardon.
\newblock Normal spreads.
\newblock {\em Geom. Dedicata}, 75(3):245--261, 1999.

\bibitem{nagy2018saturating}
Z.~L. Nagy.
\newblock Saturating sets in projective planes and hypergraph covers.
\newblock {\em Discrete Mathematics}, 341(4):1078--1083, 2018.

\bibitem{napolitano2021linear}
V.~Napolitano, O.~Polverino, G.~Zini, and F.~Zullo.
\newblock Linear sets from projection of desarguesian spreads.
\newblock {\em Finite Fields and Their Applications}, 71:101798, 2021.

\bibitem{pambianco1996small}
F.~Pambianco and L.~Storme.
\newblock Small complete caps in spaces of even characteristic.
\newblock {\em Journal of Combinatorial Theory, Series A}, 75(1):70--84, 1996.

\bibitem{polverino2010linear}
O.~Polverino.
\newblock Linear sets in finite projective spaces.
\newblock {\em Discrete Mathematics}, 310(22):3096--3107, 2010.

\bibitem{MR2121285}
W.~Schmidt.
\newblock {\em Equations over finite fields: an elementary approach}.
\newblock Kendrick Press, Heber City, UT, second edition, 2004.

\bibitem{ughi1987saturated}
E.~Ughi.
\newblock Saturated configurations of points in projective {G}alois spaces.
\newblock {\em European Journal of Combinatorics}, 8(3):325--334, 1987.

\end{thebibliography}

\newpage

\appendix

\section{Appendix}
In this Appendix we will make use of algebraic varieties over finite fields; see \cite{Bartoli:2020aa4} for a survey on links between algebraic varieties over finite fields and relevant combinatorial objects. 

A variety and more specifically a curve, i.e. a variety of dimension 1, are described by a certain set of equations with coefficients  in a finite field $\mathbb{F}_q$. A variety  defined by a unique equation is called a hypersurface. We say that a variety $\mathcal{V}$ is \emph{absolutely irreducible} if there are no varieties $\mathcal{V}^{\prime}$ and $\mathcal{V}^{\prime\prime}$ defined over the algebraic closure of $\mathbb{F}_q$ and different from $\mathcal{V}$ such that $\mathcal{V}= \mathcal{V}^{\prime} \cup \mathcal{V}^{\prime\prime}$. If a variety $\mathcal{V}\subseteq \PG(k-1,q)$ is defined by $F_i(X_0,\ldots, X_k)=0$, for  $i\in [s]$, an $\mathbb{F}_{q}$-rational point of $\mathcal{V}$ is a point $(x_0:\ldots:x_k) \in \PG(k-1,q)$ such that $F_i(x_0,\ldots, x_k)=0$, for  $i\in [s]$. A point is affine if $x_0\neq 0$. The set of the $\mathbb{F}_q$-rational points of $\mathcal{V}$ is usually denoted by $\mathcal{V}(\mathbb{F}_q)$.

In what follows, we mainly focus on algebraic hypersurfaces, i.e. algebraic varieties that may be defined by a single implicit equation. An algebraic hypersurface defined over a finite field $\mathbb{F}_q$  is \emph{absolutely irreducible}  if the associated polynomial is irreducible over every algebraic extension of $\mathbb{F}_q$. An absolutely irreducible $\mathbb{F}_q$-rational component of a hypersurface $\mathcal{S}$, defined by the polynomial $F$, is simply an absolutely irreducible hypersurface such that the associated polynomial has coefficients in $\mathbb{F}_q$ and it is a factor of $F$.   For a deeper introduction to algebraic varieties we refer the interested reader to \cite{MR0463157}.

In the small-degree regime (usually when $\max\{\deg(f),\deg(g)\}\lesssim \sqrt[4]{q}$), the existence of an absolutely irreducible component in a hypersurface (or more in general a variety) yields the existence of suitable $\mathbb{F}_q$-rational points of the hypersurface itself, due to estimates on the number of  $\mathbb{F}_{q}$-rational points of an algebraic variety such as the Lang-Weil bound  \cite{MR65218} and its genera\-lizations.

\begin{theorem}[Lang-Weil Theorem]\label{Th:LangWeil}
Let $\mathcal{V}\subseteq \PG(k-1,q)$ be an absolutely irreducible variety of dimension $n$ and degree $d$. Then there exists a constant $C$ depending only on $k$, $n$, and $d$ such that 
\begin{equation*}%\label{Eq:LW}
\left|\#(\mathcal{V}(\mathbb{F}_q))-\sum_{i=0}^{n} q^i\right|\leq (d-1)(d-2)q^{n-1/2}+Cq^{n-1}.
\end{equation*}
\end{theorem}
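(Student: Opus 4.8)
The plan is to reduce the estimate to the case of curves, where it is a repackaging of Weil's Riemann Hypothesis for curves over finite fields, and then to climb up in dimension by an incidence argument with hyperplane sections. Weil's bound for curves is the one genuinely deep input, and I would simply quote it; everything else is geometry of hyperplane sections together with careful bookkeeping of exceptional loci.

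First I would settle the case $n=1$. Let $\mathcal{V}\subseteq\PG(k-1,q)$ be an absolutely irreducible curve of degree $d$ and let $\nu\colon\widetilde{\mathcal{V}}\to\mathcal{V}$ be its normalization, a smooth absolutely irreducible projective curve over $\mathbb{F}_q$ of geometric genus $g$. A generic projection of $\mathcal{V}$ to $\PG(2,q)$ is birational onto a plane curve of degree $d$, so $g\le\binom{d-1}{2}$, i.e.\ $2g\le(d-1)(d-2)$. Weil's theorem gives $|\#\widetilde{\mathcal{V}}(\mathbb{F}_q)-(q+1)|\le 2g\,q^{1/2}\le(d-1)(d-2)q^{1/2}$, while $\#\mathcal{V}(\mathbb{F}_q)$ and $\#\widetilde{\mathcal{V}}(\mathbb{F}_q)$ differ by a constant depending only on $d$, the discrepancy being supported over the finitely many singular points of $\mathcal{V}$, whose number and number of branches are bounded in terms of $d$. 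Since $\sum_{i=0}^{1}q^{i}=q+1$, this is exactly the asserted inequality for $n=1$.

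For $n\ge 2$ I would argue by induction on $n$. Count the pairs $(P,H)$ with $P\in\mathcal{V}(\mathbb{F}_q)$ and $H$ an $\mathbb{F}_q$-rational hyperplane of $\PG(k-1,q)$ through $P$: summing over $P$ first gives $\#\mathcal{V}(\mathbb{F}_q)\cdot\frac{q^{k-1}-1}{q-1}$, and summing over $H$ first gives $\sum_{H}\#(\mathcal{V}\cap H)(\mathbb{F}_q)$. By a Bertini-type irreducibility theorem there is a proper closed subset $\mathcal{B}$ of the dual space, of degree bounded in terms of $k$ and $d$, such that for $H\notin\mathcal{B}$ the section $\mathcal{V}\cap H$ is absolutely irreducible of dimension $n-1$ and degree at most $d$, and for these the inductive hypothesis gives $\#(\mathcal{V}\cap H)(\mathbb{F}_q)=q^{n-1}+O(q^{n-3/2})$; the hyperplanes in $\mathcal{B}$ are $O(q^{k-2})$ in number and each contributes $O(q^{n-1})$ points, so assembling and dividing by $\frac{q^{k-1}-1}{q-1}$ yields $\#\mathcal{V}(\mathbb{F}_q)=q^{n}+O(q^{n-1/2})$. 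To pin down the exact leading term $(d-1)(d-2)q^{n-1/2}$ rather than an unspecified one, I would instead cut $\mathcal{V}$ with a generic linear subspace of codimension $n-1$, which for all but a proper closed family of such subspaces is an absolutely irreducible curve of degree $d$; applying the case $n=1$ to the slice and averaging over the parameter space of these subspaces, with the bad members controlled as before, carries $(d-1)(d-2)q^{1/2}+O(1)$ over to $(d-1)(d-2)q^{n-1/2}+O(q^{n-1})$, the constant $C$ coming out a function of $k$, $n$ and $d$ only.

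The main obstacle, beyond importing Weil's bound, is effectivity and uniformity. A Bertini argument guarantees that a \emph{generic} section is geometrically irreducible, but one needs enough $\mathbb{F}_q$-\emph{rational} sections to fall in the good locus: this is immediate once $q$ is large relative to $k$ and $d$, and in general requires either a direct estimate on the exceptional locus in the dual variety or a base change to an extension of $\mathbb{F}_q$ and a descent. Equally delicate is making the error terms survive the recursion, so that after the $n-1$ slicing steps the bound is still exactly $(d-1)(d-2)q^{n-1/2}+Cq^{n-1}$ with $C=C(k,n,d)$ and not a constant that deteriorates at each stage; this forces one to keep the degrees of all exceptional loci bounded purely in terms of $k$ and $d$ throughout.
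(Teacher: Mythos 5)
This statement is not proved in the paper at all: it is the classical Lang--Weil theorem, quoted verbatim from \cite{MR65218} (with the explicit constants discussed via the effective refinements the paper also cites), so there is no internal argument to measure your proposal against. Judged on its own, your sketch follows the standard route to Lang--Weil: Weil's Riemann Hypothesis for curves plus the genus bound $2g\le(d-1)(d-2)$ from a generic plane projection for the base case, and a reduction to curves by generic linear sections for higher dimension. The outline is sound, including your correct observation that the na\"ive hyperplane-by-hyperplane induction degrades the leading coefficient and that one should instead slice directly down to curves by codimension-$(n-1)$ linear subspaces and average over the parameter space. What remains unproved in your write-up is exactly the hard part beyond Weil: a \emph{quantitative} Bertini statement, i.e.\ that the locus of bad slices (sections that are not absolutely irreducible, have wrong dimension, or drop degree) is a closed subset of the Grassmannian of degree bounded purely in terms of $k$ and $d$, together with the bookkeeping showing that bad slices of every possible excess dimension contribute only $O(q^{n-1})$ after averaging, and that the normalization-versus-curve discrepancy stays bounded in terms of $d$ alone; without these, the constant $C=C(k,n,d)$ and the exact coefficient $(d-1)(d-2)$ are not yet established. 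You flag these issues honestly, but they are the substance of the proof rather than routine bookkeeping; they are carried out in Lang and Weil's original paper and, with explicit constants, in the effective versions cited alongside Theorem~\ref{Th:LangWeil}. So your proposal is a correct high-level reconstruction of the classical argument, not a complete proof, and in the context of this paper the appropriate ``proof'' is simply the citation.
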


 Although the constant $C$ was not  computed in \cite{MR65218}, explicit estimates have been provided for instance in  \cite{MR0429903,MR2206396,MR1988974,MR1962145,MR1429394,MR2121285} and they have the general shape $C=r(d)$ provided that $q>s(n,d)$, where $r$ and $s$ are polynomials of (usually) small degree. We refer to \cite{MR2206396} for a survey on these bounds. In the following we will make use of Theorem \ref{Th:LangWeil} in small  degree regime. Actually, for a variety having an absolutely irreducible component defined over $\mathbb{F}_q$, when the the degree of the variety is nondepending on $q$ and for $q$ large enough, Theorem \ref{Th:LangWeil} yields the existence of roughly $q^n-O(q^{n-1/2})$ $\mathbb{F}_q$-rational points.

Consider

\begin{eqnarray*}
d_{\alpha,\beta}(C) &:=&((\alpha^4+1)C^4 + \alpha(\alpha+1)^2{\rm Tr}_{q^4/q}(\beta)C^3 + (\alpha^4 \beta^{q^3+q} + \alpha^2 {\rm Tr}_{q^4/q}(\beta^{q+1}) +\beta^{q^2+1})C^2\\
&&+\alpha(\alpha^2 (\beta^{q^3+q+1} +\beta^{q^3+q^2+q}) + \alpha {\rm Tr}_{q^4/q}(\beta) + \beta^{q^2+q+1}+\beta^{q^3+q^2+1})C+\alpha^2({\rm N}_{q^4/q}(\beta)+1))^2,
\end{eqnarray*}
 
\begin{eqnarray*}
f_{\alpha,\beta}(C) &:=& 
\alpha^3\beta^{q^3} C^{q^2+q+1}+ \alpha \beta^{q^2} C^{q^3+q+1}+ \alpha\beta C^{q^3+q^2+q}+   \alpha^3\beta^q C^{q^3+q^2+1}
+ (\alpha^3 + \alpha^2\beta^{q^3+q^2}) C^{q+1}\\
&&+ \alpha^4\beta^{q^3+q} C^{q^2+1} + 
(\alpha^2\beta^{q^2+q} + \alpha)C^{q^3+1}
+ (\alpha^3 + \alpha^2\beta^{q+1} C^{q^3+q^2}+ 
(\alpha^2\beta^{q^3+1}+ \alpha )C^{q^2+q}\\
&&+\beta^{q^2+1}C^{q^3+q} + (\alpha^3\beta^{q^3+q^2+q} + \alpha^2\beta^{q^3})C + 
(\alpha^2\beta + \alpha\beta^{q^3+q^2+1})C^q
+(\alpha^3 \beta^{q^3+q+1} + \alpha^2\beta^q)C^{q^2}\\
&&+(\alpha^2\beta^{q^2} + \alpha\beta^{q^2+q+1})C^{q^3} + \alpha^2 {\rm N}_{q^4/q}(\beta),
\end{eqnarray*}
and 
\begin{eqnarray*}
g_{\alpha,\beta}(C) &:=& {\rm N}_{q^4/q}(\alpha C^{q+1} + \alpha C^{q^3+1}+ \alpha^2\beta^q C + \beta C^q + \alpha\beta^{q+1})\\
    &&\Big((\alpha+1)C^{q^3+q^2+q+1}  + (\alpha^3+ 
        \alpha^2 + \alpha)\beta^{q^3} C^{q^2+q+1} + \alpha \beta^{q^2}  C^{q^3+q+1} + \\
        &&+( \alpha^3 + \alpha^2 \beta^{q^3+q^2})C^{q+1}+ \beta^q(\alpha^3  + \alpha^2 + \alpha ) C^{q^3+q^2+1} + 
        (\alpha^4 +\alpha^3+\alpha^2) \beta^{q^3+q})C^{q^2+1}\\
        && + (\beta^{q^2+q} +1)\alpha^2 C^{q^3+1}+ \alpha^3( beta^{q^3+q^2+q} + \beta^{q^3} )C+ 
        \alpha \beta C^{q^3+q^2+q}\\
        &&+\alpha^2 ( \beta^{q^3+1} + 1)C^{q^2+q} + 
        \beta^{q^2+1}C^{q^3+q}+ (\alpha^2 \beta + \alpha \beta^{q^3+q^2+1})C^q \\
        &&+ (\alpha^3 +\alpha^2 \beta^{q+1})C^{q^3+q^2}+ \alpha^3 (\beta^{q^3+q+1} +  \beta^q)C^{q^2} + (\alpha^2 \beta^{q^2} +\\
        &&\alpha \beta^{q^2+q+1})C^{q^3} + \alpha^3 + \alpha^2 \beta^{q^3+q^2+q+1}+ \alpha^2\Big)\cdot\\
    &&\Big((\alpha^4 + \alpha^2)C^{q^3+q^2+q+1} +\alpha^3 {\rm Tr}_{q^4/q}(\beta^{q^3} C^{q^2+q+1})+ (\alpha^2 \beta^{q^2+q^3} + \alpha) C^{q+1} \\
        &&+ 
        \alpha^4 \beta^{q^3+q} C^{q^2+1}+  \alpha^2 \beta^{q^2+1} C^{q^3+q}
        (\alpha^2 \beta^{q^2+q} + \alpha )C^{q^3+1} + 
        (\alpha^2 \beta^{q^3+1} + \alpha) C^{q^2+q}\\
        &&+ (\alpha^2 \beta^{q+1}+ \alpha )C^{q^3+q^2}+
        (\alpha^3 \beta^{q^3+q^2+q}+ \alpha^2 \beta^{q^3}) C + 
        (\alpha \beta^{q^3+q^2+1} +  \beta )C^q\\
        &&+  (\alpha^3 \beta^{q^3+q+1} + \alpha^2 \beta^q)C^{q^2} + (\alpha \beta^{q^2+q+1} + \beta^{q^2})C^{q^3} + 
        \alpha^2 \beta^{q^3+q^2+q+1} + \alpha^2 + 1\Big).
\end{eqnarray*}
        
Let $$\Gamma_{\alpha,\beta} := \{c \in \mathbb{F}_q^4: f_{\alpha,\beta}(c)=0, \quad (c^2\alpha^2 + \beta^2) (c^{2q+2} \alpha^2 + c^{2q} \beta^2 + 1\
)d_{\alpha,\beta}(c)g_{\alpha,\beta}(c)\neq0\}.$$

\begin{lemma}\label{Prop:Gamma}
Let $\alpha^{q+1}=1$, $\beta^{(q^2+1)(q-1)}=1$, with $\alpha \neq 1$. If $q$ is large enough then $\Gamma_{\alpha,\beta}\neq \emptyset$.
\end{lemma}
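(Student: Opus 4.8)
\textbf{Proof plan for Lemma \ref{Prop:Gamma}.}

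The plan is to show that the polynomial system defining $\Gamma_{\alpha,\beta}$ has an $\mathbb{F}_q$-rational solution for $q$ large, by exhibiting an absolutely irreducible $\mathbb{F}_q$-rational component of the curve $f_{\alpha,\beta}(C)=0$ and then invoking the Lang--Weil bound (Theorem \ref{Th:LangWeil}). The first step is to understand $f_{\alpha,\beta}$ as a polynomial in $C$ of degree $q^3+q^2+q$ with coefficients that are fixed rational expressions in $\alpha,\beta$ (which satisfy $\alpha^{q+1}=1$, $\beta^{(q^2+1)(q-1)}=1$, so in particular $\mathrm{N}_{q^4/q}(\alpha)$ and related norms are controlled). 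Here the key observation is that $f_{\alpha,\beta}$ is essentially a ``linearized-type'' object: each monomial $C^{q^i+q^j+q^\ell}$ or $C^{q^i+q^j}$ arises as a product of Frobenius powers, so the curve $f_{\alpha,\beta}=0$ is birationally the affine equation of the projection of $L_{U_{\alpha,\beta}}$ from a suitable point, intersected with the condition that the image point has weight $\geq 2$. I would make this geometric interpretation explicit: $c\in\Gamma_{\alpha,\beta}$ should correspond to a point $P=P(c)\notin L_{U_{\alpha,\beta}}$ such that no secant line of $L_{U_{\alpha,\beta}}$ passes through $P$, which is exactly what the companion lemmas need.

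The second step is the core algebraic-geometry argument: prove that the projective closure of the curve $\mathcal{C}_{\alpha,\beta}: f_{\alpha,\beta}(C)=0$ (suitably homogenized in two variables, after clearing the Frobenius powers by introducing $C_0=C, C_1=C^q, C_2=C^{q^2}, C_3=C^{q^3}$ and the Moore-type relations $C_{i+1}=C_i^q$) has an absolutely irreducible $\mathbb{F}_q$-rational component whose degree is bounded independently of $q$. The standard technique is to pass to the curve in the variables $(C_0,C_1,C_2,C_3)$ cut out by $f_{\alpha,\beta}=0$ together with the three Artin--Schreier/Frobenius relations; this is a curve of degree $O(1)$ in $\mathbb{A}^4$ (degrees bounded by a small constant, not growing with $q$), and one analyses its factorization over $\overline{\mathbb{F}_q}$. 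One shows either directly, or by a tangent-line/intersection-multiplicity argument at a cleverly chosen point (often a point at infinity or a point with many coordinates equal), that no factor can be defined over a proper extension — typically because the Galois action of $\mathrm{Gal}(\overline{\mathbb{F}_q}/\mathbb{F}_q)$ permutes the putative components transitively only if the whole thing is irreducible, contradicting a degree count. This is where the explicit shape of the coefficients (through $\alpha^{q+1}=1$ and the norm condition on $\beta$) must be used: these constraints are precisely what forces the relevant resultant or discriminant not to vanish, giving a smooth point of the right component.

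The third step is routine once the component is in hand: Theorem \ref{Th:LangWeil} gives $\#\mathcal{C}_{\alpha,\beta}(\mathbb{F}_q)\geq q - (d-1)(d-2)\sqrt{q} - C$ with $d, C$ absolute constants, so for $q$ large enough there are $\gtrsim q$ rational points; subtracting the $O(\sqrt q)$ points lying on the bad locus $(c^2\alpha^2+\beta^2)(c^{2q+2}\alpha^2+c^{2q}\beta^2+1)\,d_{\alpha,\beta}(c)\,g_{\alpha,\beta}(c)=0$ (each of these factors, rewritten in the $C_i$ variables, defines a curve or finite set of degree $O(1)$, hence has $O(\sqrt q)$ or $O(1)$ common points with $\mathcal{C}_{\alpha,\beta}$ unless it contains the whole component — which one rules out by the same non-vanishing of discriminants) still leaves a nonempty set. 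Hence $\Gamma_{\alpha,\beta}\neq\emptyset$.

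I expect the main obstacle to be the second step: proving absolute irreducibility of an $\mathbb{F}_q$-rational component of $f_{\alpha,\beta}$ with a $q$-independent degree bound. The polynomial $f_{\alpha,\beta}$ is large and its monomial structure in the Frobenius powers makes a naive factorization analysis unwieldy; the delicate part is choosing the right model (the $(C_0,C_1,C_2,C_3)$ curve with Moore relations) so that degrees stay bounded, and then identifying a smooth $\mathbb{F}_q$-point or a suitable place of degree one on the intended component so that a standard irreducibility criterion (e.g. a component through a smooth point with a tangent direction not fixed by any nontrivial Galois element, or an application of a Bézout-type bound to preclude splitting) applies. Controlling simultaneously that this point avoids the excluded locus of $\Gamma_{\alpha,\beta}$ — which is why the hypotheses $\alpha\neq 1$, $\alpha^{q+1}=1$, $\beta^{(q^2+1)(q-1)}=1$ are imposed — is the technical heart, and is presumably why the proof is deferred to the Appendix.
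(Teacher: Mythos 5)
Your overall strategy (exhibit an absolutely irreducible $\F_q$-rational object of bounded degree inside the zero locus of $f_{\alpha,\beta}$, apply Lang--Weil, then discard the bad locus) is the right one, but the model in your second step is where the argument breaks, and it is exactly the point the paper handles differently. You propose to pass to the ``curve'' in $(C_0,C_1,C_2,C_3)$ cut out by $f_{\alpha,\beta}=0$ together with the Frobenius relations $C_{i+1}=C_i^q$, asserting its degree is $O(1)$. Those relations have degree $q$, so the degree of that variety grows with $q$ (in fact the system is zero-dimensional: $X$ determines $Y,Z,T$, and $f_{\alpha,\beta}(X,X^q,X^{q^2},X^{q^3})$ has degree about $q^3$), and Lang--Weil with $d$ growing with $q$ gives nothing, since the error term $(d-1)(d-2)q^{n-1/2}$ swamps the main term. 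Your expected count of roughly $q$ good points minus $O(\sqrt q)$ bad ones also reflects the wrong dimension: the correct count is $q^3-O(q^{5/2})$ against a bad locus of size $O(q^2)$.

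The device used in the paper keeps the Frobenius structure linear instead of imposing it by degree-$q$ equations. Regard $f_{\alpha,\beta}$ as a polynomial $f_{\alpha,\beta}(X,Y,Z,T)$ of total degree $3$ in the four independent variables $X=C$, $Y=C^q$, $Z=C^{q^2}$, $T=C^{q^3}$. Writing $C=C_0\xi+C_1\xi^q+C_2\xi^{q^2}+C_3\xi^{q^3}$ for a normal basis of $\F_{q^4}$ over $\F_q$, the quantities $C,C^q,C^{q^2},C^{q^3}$ are invertible $\F_{q^4}$-linear forms in $(C_0,\ldots,C_3)$, so the set of $c\in\F_{q^4}$ with $f_{\alpha,\beta}(c)=0$ is exactly the set of $\F_q$-points of a degree-$3$ hypersurface $\mathcal{Y}$ in $\mathbb{A}^4$ (a threefold, not a curve), $\F_{q^4}$-projectively equivalent to $f_{\alpha,\beta}(X,Y,Z,T)=0$; the hypothesis $\alpha^{q+1}=1$ is used precisely to verify $(f_{\alpha,\beta}(C))^q=f_{\alpha,\beta}(C)$, i.e.\ that $\mathcal{Y}$ is defined over $\F_q$. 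Absolute irreducibility then needs none of the Galois-orbit or tangent-line machinery you anticipate: $f_{\alpha,\beta}(X,Y,Z,T)$ is linear in $T$, so reducibility would force a common factor of its two coefficients in $X,Y,Z$, which a resultant computation excludes using $\alpha\beta\neq0$. Lang--Weil in dimension $3$ then yields about $q^3$ points, and resultants with respect to $T$ (together with the irreducibility of $f$ against the cubic factors of $g_{\alpha,\beta}$) show the excluded locus meets $\mathcal{Y}$ in dimension $2$, hence in $O(q^2)$ points, giving $\Gamma_{\alpha,\beta}\neq\emptyset$ for $q$ large. Without this (or an equivalent) bounded-degree, correct-dimension model, your step two, and hence the proof, does not go through.
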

\begin{proof}
First observe that the polynomials $f_{\alpha,\beta}(C)$, $d_{\alpha,\beta}(C)$, $g_{\alpha,\beta}(C)$ are nonvanishing.

To prove that $\Gamma_{\alpha,\beta}$ is not empty, we will make use the following approach. Set $X:=C$, $Y:= C^q$, $Z:=C^{q^2}$, $T:=C^{q^3}$, let $\{\xi,\xi^q,\xi^{q^2},\xi^{q^3}\}$ be a normal basis of $\mathbb{F}_{q^4}$ over $\mathbb{F}_q$, and write $C=C_0\xi +C_1\xi^q+C_2\xi^{q^2}+C_3\xi^{q^3}$, where $C_0,C_1,C_2,C_3\in \mathbb{F}_q$. Also, let $\alpha=\alpha_0\xi +\alpha_1\xi^q+\alpha_2\xi^{q^2}+\alpha_3\xi^{q^3}$ and $\beta=\beta_0\xi +\beta_1\xi^q+\beta_2\xi^{q^2}+\beta_3\xi^{q^3}$. 

Write $f_{\alpha,\beta}(C)$ as $f_{\alpha,\beta}(X,Y,Z,T)$. Since $\alpha^{q+1}=1$, $(f_{\alpha,\beta}(C))^q=f_{\alpha,\beta}(C)$ and thus the hypersurface defined by $\mathcal{Y} : f_{\alpha,\beta}(C_0\xi +C_1\xi^q+C_2\xi^{q^2}+C_3\xi^{q^3})=0$ is $\mathbb{F}_q$-rational and projectively $\mathbb{F}_{q^4}$-isomorphic to the hypersurface  $\mathcal{X} : f_{\alpha,\beta}(X,Y,Z,T)=0$ and of degree $3$. 

Note that  $\mathcal{X}$ is of degree one in $T$ and thus is reducible only if $f_{\alpha,\beta}(X,Y,Z,T)= \ell_1(X,Y,Z) T+\ell_2(X,Y,Z)$ possesses a factor depending only on $X,Y,Z$, which should be a common factor of the coefficient $\ell_1(X,Y,Z)$ and $\ell_{2}(X,Y,Z)$. That is to say, the resultant between $\ell_1(X,Y,Z)$ and $\ell_{2}(X,Y,Z)$ with respect to $Y$ should vanish. This is not possible since $\alpha\beta\neq0$, as easy computations show. 

The argument above yields the absolutely irreducibility of $\mathcal{X}$ and thus of $\mathcal{Y}$, since the absolutely irreducibility is preserved by projectivity. Because $\mathcal{Y}$ is defined over $\mathbb{F}_q$ and absolutely irreducible, it has dimension $3$ and it contains roughly at least $q^3-O(q^{5/2})$ $\mathbb{F}_q$-rational points $(x_0,y_0,z_0,t_0)$ which correspond to values $C=x_0 \xi+y_0\xi^q+x_2\xi^{q^2}+x_3\xi^{q^3}$ satisfying $f_{\alpha,\beta}(C)=0$.

In order to show the existence of values $C\in\mathbb{F}_{q^4}$ in $\Gamma_{\alpha,\beta}$ it is enough to show that the hypersurface 
$$\mathcal{Z} : (X^2\alpha^2 + \beta^2) (X^2Y^2 \alpha^2 +  \beta^2Y^2 + 1\
)d_{\alpha,\beta}(X,Y,Z,T)g_{\alpha,\beta}(X,Y,Z,T)=0$$
does not contain $\mathcal{X}$, so that $\mathcal{Z}\cap \mathcal{X}$ is subvariety of codimension one in $\mathcal{X}$. We have only to check that this actually holds for the hypersurface $g_{\alpha,\beta}(X,Y,Z,T)=0$, being trivial for the other components of $\mathcal{Z}$. The four factors of degree 3 in $g_{\alpha,\beta}(X,Y,Z,T)$ cannot be factors of $f_{\alpha,\beta}(X,Y,Z,T)$, being this polynomial irriducible. It can be easily checked that the resultants of each of two other factors of $g_{\alpha,\beta}(X,Y,Z,T)$ and $f_{\alpha,\beta}(X,Y,Z,T)$ with respect to $T$ are nonvanishing polynomials and thus $\mathcal{X}$ and $\mathcal{Z}$ do not share any component. Thus  $\mathcal{Z}\cap \mathcal{X}$ is of dimension $2$ and there are at most $O(q^2)$ $\mathbb{F}_q$-rational points $(x_0,y_0,z_0,t_0)$ whose corresponding value $C=x_0 \xi+y_0\xi^q+x_2\xi^{q^2}+x_3\xi^{q^3}$ satisfies  $f_{\alpha,\beta}(C)=0$ and $ (C^2\alpha^2 + \beta^2) (C^{2q+2} \alpha^2 + C^{2q} \beta^2 + 1\
)d_{\alpha,\beta}(C)g_{\alpha,\beta}(C)=0$. This yields $|\Gamma_{\alpha,\beta}|=q^3-O(q^{5/2})$ and the claim follows.
\end{proof}

\begin{proof}[Proof of Lemma \ref{lem:first-case}]
We will prove that  each point $P_C:=(0:1:C)\in \mathrm{PG}(2,q^4)$, $C\in \Gamma_{\alpha,\beta}$, is not $1$-saturated by $L_{U_{\alpha,\beta}}$. Since by Lemma \ref{Prop:Gamma} $\Gamma_{\alpha,\beta}\neq 0$, this will prove the claim.

Since $U_{\alpha,\beta}$ is $2$-saturating if and only if $U_{\alpha^2,\beta^2}$ is $2$-saturating, in the following, for the seek of convenience we will consider this latter case.  

The point $P_C$ does not belong to $L_{U_{\alpha^2,\beta^2}}$ for any $C\in \mathbb{F}_{q^2}$.

Recall that $P_C$ is saturated if and only if 
$$
\det\begin{pmatrix}
x&x^q+\alpha^2 x^{q^3}&x^{q^2}+\beta^2 x^{q^3}\\
y&y^q+\alpha^2 y^{q^3}&y^{q^2}+\beta^2 y^{q^3}\\
0&1&C
\end{pmatrix}=0
$$
for some $x,y \in \mathbb{F}_{q^4}$ with $xy^q-x^qy\neq0$. By direct checking, the above determinant reads 
$$F_0(x,y) := C^2 x y^q  + xy^{q^2} + C^2\alpha^2 x y^{q^3}  + \beta^2 x y^{q^3}  + C^2 x^q y + x^{q^2} y + C^2\alpha^2 x^{q^3} y  + \beta^2 x^{q^3} y.$$
Denote by $F_i(x,y):=(F_0(x,y))^{q^i}$, $i=1,2,3$. 

Let $G_1(x,y),G_2(x,y),G_3(x,y)$ be the polynomials
\begin{eqnarray*}
G_1(x,y)&:=&(C^2y^q + y^{q^2} + C^2\alpha^2 y^{q^3} + \beta^2y^{q^3}) F_1(x,y)+(C^{2q}y^q  + \alpha^2\beta^{2q} y^q)F_0(x,y),\\
G_2(x,y)&:=&(C^2y^q + y^{q^2} + C^2\alpha^2 y^{q^3} + \beta^2y^{q^3}) F_2(x,y)+ y^{q^2} F_0(x,y),\\
G_3(x,y)&:=&(C^2y^q + y^{q^2} + C^2\alpha^2 y^{q^3} + \beta^2y^{q^3}) F_3(x,y)+ C^{2q^3}\alpha^2 y^{q^3} F_0(x,y).\\
\end{eqnarray*}
One can check that $G_i$ are $q$-linearized polynomials in $x$ containing only $x^q,x^{q^2},x^{q^3}$. We consider now the polynomials $H_1$ and $H_2$  
\begin{eqnarray*}
H_1(x,y)&:=&u G_1(x,y)+vG_2(x,y),\\
H_2(x,y)&:=&w G_1(x,y)+vG_3(x,y),\\
\end{eqnarray*}
where

\begin{eqnarray*}
u&:=&y^{q^2}(C^2 y + (\alpha^2 C^{2q^2+2}+\beta^{2q^2}C^2)y^q +
(\alpha^2 C^{2q^2}+\beta^{2q^2})y^{q^2}\\
&&+(\alpha^4 C^{2q^2}+\alpha^2\beta^{2q^2}C^2+\alpha^2\beta^2C^{2q^2}+\beta^{2q^2+2})y^{q^3})\\
v&:=&(C^{2q} + \alpha^2\beta^{2q}) y^{q^2+1} + (C^{2+2q}2\alpha^2 + 
        C^2\alpha^4\beta^{2q}+C^{2q}\beta^2+\alpha^2\beta^{2q+2})y^{q^3+1}+ C^{2q+2}\alpha^2 y^{q^2+q} \\
        &&  + 
        C^2\alpha^2 y^{q^3+q} + C^{2q}\alpha^2 y^{2q^2} + (C^{2q+2}\alpha^4 + 
        C^{2q}\alpha^2\beta^2 + \alpha^2) y^{q^3+q^2}+ (C^2\alpha^4 + 
        \alpha^2\beta^2)y^{2q^3},\\
w&:=& \alpha^2 (C^{2q^3+2} y + C^2 y^q + y^{q^2} + C^2 \alpha^2 y^{q^3} + \beta^2 y^{q^3})y^{q^3}.
\end{eqnarray*}

Now,
$$H_1(x,y)=(C^2y^q + y^{q^2} + C^2\alpha^2 y^{q^3} + \beta^2y^{q^3})(x y^q + x^q y)^{q^2} L_1(y) 
$$
and 
$$H_2(x,y)=(C^2y^q + y^{q^2} + C^2\alpha^2 y^{q^3} + \beta^2y^{q^3})(x y^q + x^q y)^{q^2} L_2(y), 
$$
where
\begin{eqnarray*}
L_1(y)&:=&
(C^{2q+2}\alpha^2 + C^2\alpha^4\beta^{2q} + C^{2q}\beta^2 + 
    \alpha^2\beta^{2q+2})y^2+ (C^{2q^2+2q+2}\alpha^4 + 
    C^{2q+2}\alpha^2\beta^{2q^2}\\&& + C^{2q^2+2}\alpha^6\beta^{2q} + 
    C^2\alpha^4\beta^{2q^2+2q} + C^2\alpha^2+C^{2q^2+2q}\alpha^2\beta^2+C^{2q}\beta^{2q^2+2} + 
    C^{2q^2}\alpha^4\beta^{2q+2}\\&& + \alpha^2\beta^{2q^2+2q+2})y^{q+1} + 
    (C^{2q+2}\alpha^4 + C{2q^2+2} + C^{2q}\alpha^2\beta^2 + 
    C^{2q^2}\alpha^2\beta^{2q})y^{q^2+1}\\&&
    +( C^{2q^2+2q+2}\alpha^2 + 
     C^{2q^2+2}\alpha^4\beta^{2q} +  C^2\alpha^4 + 
     C^{2q^2+2q}\beta^2 +  C^{2q^2}\alpha^2\beta^{2q+2} + 
     \alpha^2 \beta^2)y^{q^3+1}\\&&
    + (C^{2q+2}\alpha^4 + C^2\alpha^2\beta^{2q^2})y^{2q} +
    (C^{2q^2+2q+2}\alpha^2 + C^{2q^2}\alpha^4 + 
    \alpha^2\beta^{2q^2})y^{q^2+q}\\&&
    +(C^{2q^2+2}\alpha^6 + 
    C^{2q^2+2}\alpha^2 + C^2\alpha^4\beta^{2q^2} + 
    C^{2q^2}\alpha^4\beta^2 + \alpha^2\beta^{2q^2+2})y^{q^3+q}\\&&
    + C^{2q^2+2q}\alpha^2 y^{2q^2}+ 
    (C^{2q^2+2q+2}\alpha^4 + C^{2q^2+2q}\alpha^2\beta^2 + C^{2q^2}\alpha^2)y^{q^3+q^2}\\&&
    + (C^{2q^2+2}\alpha^4 + C^{2q^2}\alpha^2\beta^2 ) y^{2q^3}       
\end{eqnarray*}
and 
\begin{eqnarray*}
L_2(y)&:=&(C^{2q^3+2q^2}\alpha^2 + C^{2q^3}\alpha^4\beta^{2q})y^2
+ (C^{2q^3+2q^2+2}\alpha^4
    + C^{2q}\alpha^2 + \alpha^4\beta^{2q})y^{q+1}\\&&
    + (C^{2q^3+2q}\alpha^4 + 
     C^{2q^3+2q} + C^{2q}\alpha^2\beta^{2q^3}+ C^{2q^3}\alpha^2\beta^{2q}
    + \alpha^4\beta^{2q^3+2q})y^{q^2+1}\\&&
    + (  C^{2q^3+2q+2} \alpha^2 + 
      C^{2q+2} \alpha^4 \beta^{2q^3} +   C^{2q^3+2} \alpha^4 \beta^{2q} + 
     C^2 \alpha^6 \beta^{2q^3+2q} +   C^{2q^3+2q}\beta^2 +\\&& 
      C^{2q} \alpha^2 \beta^{2q^3+2} +   C^{2q^3} \alpha^4 + 
      C^{2q^3} \alpha^2 \beta^{2q+2} +  \alpha^4 \beta^{2q^3+2q+2})y^{q^3+1}
    +C^{2q+2} \alpha^4 y^{2q}\\&&
    + (C^{2q^3+2q+2} \alpha^2 + 
    C^{2q+2} \alpha^4 \beta^{2q^3} + C^{2q} \alpha^4 )y^{q^2+q}
    + (C^{2q+2} \alpha^6 + C^{2q^3+2} \alpha^2 + 
    C^2 \alpha^4 \beta^{2q^3}\\&& + C^{2q} \alpha^4\beta^2 )y^{q^3+q^2}
    + (C^{2q^3+2q} \alpha^2 + C^{2q} \alpha^4 \beta^{2q^3} )y^{q^2}
    +(C^{2q^3+2q+2} \alpha^4 + C^{2q+2} \alpha^6 \beta^{2q^3}\\&& + 
    C^{2q^3+2q} \alpha^2\beta^2 + C^{2q} \alpha^4 \beta^{2q^3+2} + 
    C^{2q^3} \alpha^2 + \alpha^4 \beta^{2q^3} )y^{q^3+q^2}\\&&
    + (C^{2q^3+2} \alpha^4 + C^2 \alpha^6 \beta^{2q^3} + C^{2q^3} \alpha^2\beta^2 + 
    \alpha^4 \beta^{2q^3+2})y^{2q^3}.
\end{eqnarray*}
%PP := Coefficients(CC1[4],y)[3]*CC1[3] - Coefficients(CC1[3],y)[3]*CC1[4];
Note that the determinant of the Dickson matrix of $C^2y^q + y^{q^2} + C^2\alpha^2 y^{q^3} + \beta^2y^{q^3}$ is $d_{\alpha,\beta}(C)$ and thus $d_{\alpha,\beta}(C)\neq 0$ for any $C \in \Gamma_{\alpha,\beta}$.
       
        We already excluded the case $x y^q + x^q y=0$ and thus from $H_1(x,y)=H_2(x,y)=0$ one gets $L_1(y)=L_2(y)=0$. 
Let $P(y):= \alpha^2C^{2q^3}(C^q + \alpha \beta^q)^2L_1(y)+(C^q+\alpha\beta^q)^2(C\alpha + \beta)^2 L_2(y)=(C^{q} + \alpha\beta^q)^2Q(y)$.
From $Q(y)=0$ one gets  $y=U/V$, where
\begin{eqnarray*}
U &:=&(\alpha^2(C^2y^q + y^{q^2} + C^2 \alpha^2 y^{q^3}+ y^{q^3}\beta^2)\cdot \\&&
\Big( (C^{2q+2} \alpha^4 +  C^{2q} \alpha^2\beta^2  +  C^{2q^3+2q^2} \alpha^4 + 
        C^{2q^3} \alpha^2 \beta^{2q^2}) y^{q}\\&& + (C^{2q^3+2q+2} \alpha^2 + 
        C^{2q+2} \alpha^4 \beta^{2q^3} +   C^{2q^3+2q^2+2q} \alpha^2 + 
        C^{2q^3+2q}\beta^2 + C^{2q} \alpha^2 \beta^{2q^3+2})y^{q^2}\\&& + 
        (C^{2q^3+2} \alpha^2 + C^2 \alpha^4 \beta^{2q^3} +  C^{2q^3+2q^2} \alpha^2 + 
        C^{2q^3}\beta^2 + \alpha^2 \beta^{2q^3+2})y^{q^3}\Big),
\end{eqnarray*}
\begin{eqnarray*}
V&:=& (C^4  C^{2q^3+2q} \alpha^6 +  C^{2q+2}  C^{2q^3+2q^2} \alpha^6 + 
         C^{2q^3+2q+2} \alpha^4\beta^2 +  C^{2q^3+2q+2} \alpha^4 \beta^{2q^2} +
         C^{2q+2} \alpha^4\\&&+  C^{2q^3+2q^2+2} \alpha^8 \beta^{2q} + 
         C^{2q^3+2} \alpha^6 \beta^{2q} \beta^{2q^2} +  C^{2q^3+2} \alpha^4 + 
         C^2 \alpha^6 \beta^{2q} +  C^{2q}  C^{2q^3+2q^2} \alpha^4\beta^2\\&& + 
         C^{2q^3+2q} \alpha^2\beta^2 \beta^{2q^2} +  C^{2q} \alpha^2\beta^2 + 
         C^{2q^3+2q^2} \alpha^6 \beta^{2q+2} + 
         C^{2q^3} \alpha^4 \beta^{2q+2} \beta^{2q^2} +  \alpha^4 \beta^{2q+2})y^q\\&& + 
        (C^{2q^3+2q+2} 
        \alpha^2 +    C^{2q+2} \alpha^4 \beta^{2q^3} + 
           C^{2q^3+2} \alpha^4 \beta^{2q} +   C^2 \alpha^6 \beta^{2q^3+2q} + 
           C^{2q}  C^{2q^3+2q^2} \alpha^2\\&& +    C^{2q^3+2q}\beta^2 + 
           C^{2q} \alpha^2 \beta^{2q^3+2} +    C^{2q^3+2q^2} \alpha^4 \beta^{2q} + 
           C^{2q^3} \alpha^2 \beta^{2q+2} +   \alpha^4 \beta^{2q^3+2q+2})y^{q^2}\\&& + 
        (C^4  C^{2q^3+2q} \alpha^4 +   C^4  C^{2q} \alpha^6 \beta^{2q^3} + 
          C^4  C^{2q^3} \alpha^6 \beta^{2q} +   C^4 \alpha^8 \beta^{2q^3+2q} + 
           C^{2q+2}  C^{2q^3+2q^2} \alpha^4\\&& +    C^{2q^3+2q^2+2} \alpha^6 \beta^{2q} + 
           C^{2q}  C^{2q^3+2q^2} \alpha^2\beta^2 +    C^{2q^3+2q}\beta^4 + 
           C^{2q} \alpha^2\beta^4 \beta^{2q^3} +    C^{2q^3+2q^2} \alpha^4 \beta^{2q+2}\\&& + 
           C^{2q^3} \alpha^2\beta^4 \beta^{2q} +   \alpha^4\beta^4 \beta^{2q^3+2q})y^{q^3}.
\end{eqnarray*}

Substituting it in $L_1(y)=0$ we obtain 
\begin{equation}\label{eq:L}
    \alpha^2(C^2\alpha^2 + \beta^2) (C^{2q+2} \alpha^2 + C^{2q} \beta^2 + 1\
)(C^2y^q+ y^{q^2} + C^2 \alpha^2 y^{q^3} + \beta^2 y^{q^3}) y^qM(y)=0,
\end{equation}
where 
$$M(y)=a_{00}^2y^{2q^3}+a_{01}^2y^{q^3+q^2}+a_{10}^2y^{q^3+q}+a_{02}^2y^{2q^2}+a_{11}^2y^{q^2+q}+a_{20}^2y^{2q},$$
with
\begin{eqnarray*}
a_{00}&:=& \alpha(C^{q^2+q} + C^{q^2} \alpha \beta^{q} + \alpha)(C^{q^3+1} \alpha + C \alpha^2 \beta^{q^3} + C^{q^3+q^2} \alpha + C^{q^3}\beta + \alpha \beta^{q^3+1})\\&&
\cdot(C^{q+1} \alpha + C^{q^3+1} \alpha + C \alpha^2 \beta^{q} + C^{q}\beta + \alpha \beta^{q+1}),\\
a_{01}&:=&
   (C^{q} + \alpha \beta^{q})
   (C^{q^3+1} \alpha + C \alpha^2 \beta^{q^3} + C^{q^3+q^2} \alpha + C^{q^3}\beta + \alpha \beta^{q^3+1})
   f_{\alpha,\beta}(C),\\
a_{02} &:=&
     (C^{q^3+q} \alpha^2 + C^{q^3+q} + C^{q} \alpha \beta^{q^3} + C^{q^3} \alpha \beta^{q} + \alpha^2 \beta^{q^3+q})\\&&
    \cdot(C^{q^3+1} \alpha + C \alpha^2 \beta^{q^3} + C^{q^3+q^2} \alpha + C^{q^3}\beta + \alpha \beta^{q^3+1})\\&&
    \cdot(C^{q+1} \alpha + C^{q^2+q} \alpha + C^{q} \beta^{q^2} + C^{q^2} \alpha^2 \beta^{q} + \alpha \beta^{q^2+q}),\\
a_{10} &:=&
    \alpha
    (C^{q+1} \alpha + C^{q^3+1} \alpha + C \alpha^2 \beta^{q} + C^{q}\beta + \alpha \beta^{q+1})
     f_{\alpha,\beta}(C),\\
a_{11} &:=&
    \alpha C^{q^3}
    (C^{q+1} \alpha + C^{q^2+q} \alpha + C^{q} \beta^{q^2} + C^{q^2} \alpha^2 \beta^{q} + \alpha \beta^{q^2+q})
     f_{\alpha,\beta}(C),\\
a_{20} &:=&
    \alpha^2
    (C^{q^3+q^2} \alpha + C^{q^3} \beta^{q^2} + 1)
    (C^{q+1} \alpha + C^{q^2+q} \alpha + C^{q} \beta^{q^2} + C^{q^2} \alpha^2 \beta^{q} + \alpha \beta^{q^2+q})\\&&
    (C^{q+1} \alpha + C^{q^3+1} \alpha + C \alpha^2 \beta^{q} + C^{q}\beta + \alpha \beta^{q+1}).
\end{eqnarray*}
Since $C\in \Gamma_{\alpha,\beta}$ and $\alpha y\neq0$,  \eqref{eq:L} yields $M(y)=0$. Also, from $f_{\alpha,\beta}(C)=0$, $M(y)=({a_{00}}y^{q^3}+{a_{02}}y^{q^2}+{a_{20}}y^{q})^2$. 
We will show that $M(y)$ has only the zero root in $\mathbb{F}_{q^4}$. To this aim consider the Dickson matrix associated with $M(y)$ 
$$
\begin{pmatrix}
0&{a_{20}}&{a_{02}}&{a_{00}}\\
{a_{00}}^q&0&{a_{20}}^q&{a_{02}}^q\\
{a_{02}}^{q^2}&{a_{00}}^{q^2}&0&{a_{20}}^{q^2}\\
{a_{20}}^{q^3}&{a_{02}}^{q^3}&{a_{00}}^{q^3}&0\\
\end{pmatrix},
$$
whose determinant is precisely  $g_{\alpha,\beta}(C)$, which is nonvanishing since $C\in \Gamma_{\alpha,\beta}$. 

This shows that $P_C$ is not saturated by $L_{U_{\alpha,\beta}}$ and the claim follows.
\end{proof}

Let
\begin{eqnarray*}
r_0(z)&:= (z^4-z)(z-\beta)(z^2 + z\beta^2 + z\beta + 1)(z^2 + z\beta^3 + z\beta^2 + z\beta + \beta^3 + \beta^2 + 1)(z\beta^3+\beta+1),
\end{eqnarray*}
and 
$$\Delta_0 :=\left\{z \in \mathbb{F}_q \ : \ {\rm Tr}_{q/2}\left(\frac{(z\beta^3 + z\beta^2 + z + \beta)(z+\beta)}{z^2\beta^4}\right)=0, r_0(z)\neq 0  \right\}.$$

\begin{lemma}
Let $\beta\in \mathbb{F}_q^*$. If $q\geq 64$ then $\Delta_0\neq \emptyset$.
\end{lemma}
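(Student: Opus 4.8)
The plan is to count, via the Lang--Weil bound (Theorem~\ref{Th:LangWeil}), the $\mathbb{F}_q$-rational points of the affine curve (actually a plane set cut out by one equation) hidden inside the definition of $\Delta_0$. Observe that the trace condition ${\rm Tr}_{q/2}(P(z)/(z^2\beta^4))=0$, with $P(z)=(z\beta^3+z\beta^2+z+\beta)(z+\beta)$, is equivalent to the solvability in $w\in\mathbb{F}_q$ of the Artin--Schreier equation
\[
w^2+w=\frac{(z\beta^3+z\beta^2+z+\beta)(z+\beta)}{z^2\beta^4}.
\]
Clearing denominators, this defines an affine plane curve $\mathcal{C}_\beta$ over $\mathbb{F}_q$ of degree bounded by a small absolute constant (here at most $4$, independent of $q$ since $\beta\in\mathbb{F}_q$ is a \emph{coefficient}). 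Each $\mathbb{F}_q$-rational point $(z,w)$ of $\mathcal{C}_\beta$ with $z\notin\{\text{roots of }r_0\}$ yields an element of $\Delta_0$, and the fibre over a fixed admissible $z$ has size $0$ or $2$; so it suffices to show $\mathcal{C}_\beta$ has at least one $\mathbb{F}_q$-rational point with $z$ outside the finitely many forbidden values coming from $r_0(z)=0$.

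First I would verify that $\mathcal{C}_\beta$ has an absolutely irreducible $\mathbb{F}_q$-rational component. Because the curve is of the form $w^2+w=\varphi(z)$ with $\varphi$ a nonconstant rational function, $\mathcal{C}_\beta$ is absolutely irreducible precisely when $\varphi(z)$ is not of the shape $\psi^2+\psi+c$ for a rational function $\psi$ and a constant $c$ (in characteristic $2$ this is the standard irreducibility criterion for Artin--Schreier covers); a quick inspection of the poles of $\varphi$ — it has a double pole at $z=0$ coming from the $z^2$ in the denominator, so its polar part at $0$ cannot be absorbed into $\psi^2+\psi$ — shows that $\mathcal{C}_\beta$ is indeed absolutely irreducible over $\mathbb{F}_q$, and one should also record that this holds uniformly in $\beta\in\mathbb{F}_q^*$ (the only place a degeneration could occur is a cancellation in the numerator, which one checks does not happen for $\beta\neq 0$). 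Then Theorem~\ref{Th:LangWeil} gives $\#\mathcal{C}_\beta(\mathbb{F}_q)\geq q+1-(d-1)(d-2)\sqrt q - C$ with $d$ and $C$ absolute; one subtracts the $O(1)$ points lying over $z$ with $r_0(z)=0$ (at most $\deg r_0\cdot 2$ of them, plus the point at infinity and possible singular points, all $O(1)$), so that for $q$ larger than an explicit absolute threshold the count is positive.

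The main obstacle — and the only real work — is to pin down the numerical threshold, since the lemma claims $q\geq 64$ rather than merely ``$q$ large enough''. For that one cannot invoke Lang--Weil with an unspecified constant $C$; instead I would use the Hasse--Weil bound directly on the (smooth projective model of the) Artin--Schreier curve $w^2+w=\varphi(z)$. Its genus $g$ is computed from the ramification (Riemann--Hurwitz applied to the degree-$2$ cover $z$-line $\leftarrow \mathcal{C}_\beta$): the wildly ramified points are exactly the poles of $\varphi$ of odd order after reduction, which here are $z=0$ and $z=\infty$ (each contributing a conductor term) together with possibly the simple zeros of the denominator factors; a direct count gives a small genus (one expects $g\le 3$ or so, again independent of $\beta$), whence $\#\mathcal{C}_\beta(\mathbb{F}_q)\ge q+1-2g\sqrt q$. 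Removing the $O(1)$ excluded $z$-values and the points at infinity, the number of usable $z\in\Delta_0$ is at least $q/2 - g\sqrt q - O(1)$, which is strictly positive as soon as $q\ge 64$ once the explicit small constants are inserted. So the skeleton is: (i) rewrite the trace condition as an Artin--Schreier curve; (ii) prove absolute irreducibility uniformly in $\beta$; (iii) bound the genus; (iv) apply Hasse--Weil and subtract the $O(1)$ forbidden points, checking the arithmetic closes at $q=64$.
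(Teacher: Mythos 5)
Your skeleton coincides with the paper's proof: rewrite the trace condition as solvability of the Artin--Schreier equation $w^2+w=\varphi(z)$ with $\varphi(z)=\frac{(z\beta^3+z\beta^2+z+\beta)(z+\beta)}{z^2\beta^4}$, show the resulting curve is absolutely irreducible, count its $\mathbb{F}_q$-points with a curve bound, and discard the at most $10$ roots of $r_0$. However, your justification of the crucial irreducibility step is wrong as stated. You argue that $\varphi$ has a double pole at $z=0$ ``so its polar part at $0$ cannot be absorbed into $\psi^2+\psi$''. In characteristic $2$ the opposite is true: poles of \emph{even} order can always be reduced (every element of $\overline{\mathbb{F}}_q$ is a square, so one subtracts $\psi^2+\psi$ with $\psi=a/z$ for a suitable $a$), and in this very example the double pole \emph{is} absorbed. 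Expanding, $\varphi(z)=\frac{\beta^3+\beta^2+1}{\beta^4}+\frac{1+1/\beta}{z}+\frac{1}{\beta^2z^2}$, and the substitution $w\mapsto w+\frac{1}{\beta z}$ (this is exactly the birational change of variables used in the paper) turns the equation into $w^2+w=\frac{\beta^3+\beta^2+1}{\beta^4}+\frac{1}{z}$. Absolute irreducibility follows because after this reduction a pole of \emph{odd} order (here a simple pole at $z=0$, with nonzero coefficient $1$) survives -- not because the original double pole is an obstruction. Had the coefficients conspired so that no odd-order pole remained, the cover could split, so this verification cannot be waved through; it requires the explicit reduction.

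Two smaller points in your ramification/counting discussion also need repair. There is no pole of $\varphi$ at $z=\infty$ (numerator and denominator both have degree $2$), so $\infty$ is not a wildly ramified place; after reduction the only ramified place is $z=0$ with a simple pole, and the smooth model is in fact rational (genus $0$; the paper's ``genus at most $3$'' is just a safe overestimate), so your ``$g\le 3$ or so'' is harmless but your list of ramified points is not. Finally, to close the arithmetic at $q\ge 64$ you should make the count explicit as the paper does: the curve has at least $q-2\sqrt q$ affine $\mathbb{F}_q$-points, the fibres over admissible $z$ have size $2$, giving at least $\frac{q-2\sqrt q}{2}-1$ values of $z$ with ${\rm Tr}_{q/2}(\varphi(z))=0$; removing the at most $\deg r_0=10$ roots of $r_0$ leaves, for $q\ge 64$, at least $23-10>0$ elements of $\Delta_0$. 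With the irreducibility argument corrected along these lines, your proof becomes the paper's proof.
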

\begin{proof}
Recall that ${\rm Tr}_{q/2}\left(\frac{(z\beta^3 + z\beta^2 + z + \beta)(z+\beta)}{z^2\beta^4}\right)=0$ if and only if there exists $x \in \mathbb{F}_q$ such that 
$x^2+x+\frac{(z\beta^3 + z\beta^2 + z + \beta)(z+\beta)}{z^2\beta^4}=0.$

Consider the plane $\mathbb{F}_q$-rational curve 
$$\mathcal{C}: X^2+X+\frac{(Z\beta^3 + Z\beta^2 + Z + \beta)(Z+\beta)}{Z^2\beta^4}=0.$$
It is birationally equivalent to 
$$\left(X+\frac{1}{\beta Z}\right)^2+X+\frac{1}{\beta Z}+\frac{(Z\beta^3 + Z\beta^2 + Z + \beta)(Z+\beta)}{Z^2\beta^4}=0,$$
that is 
$$X^2+X+\frac{\beta^3+\beta^2+1}{\beta^4}+\frac{1}{Z}=0,$$
which is clearly absolutely irreducible and of genus at most 3. This means that there are at least $q-2\sqrt{q}$ $\mathbb{F}_q$-rational points in $\mathcal{C}$ and thus $\frac{q-2\sqrt{q}}{2}-1$ values $z\in \mathbb{F}_q$ for which \[{\rm Tr}_{q/2}\left(\frac{(z\beta^3 + z\beta^2 + z + \beta)(z+\beta)}{z^2\beta^4}\right)=0.\] Among these values,  at most 10 satisfy $r_0(z)=0$. Since $q\geq64$, $\Delta_0\neq \emptyset$.
\end{proof}

\begin{proof}[Proof of Lemma \ref{Prop:alpha_1 beta_Fq}]
We will prove that for each $\beta$ there exists $z\in \mathbb{F}_{q}$ such that the point $(1: z_0:\beta)$ is not $1$-saturated, by proving that the set 
$$\{(x^q+x^{q^3}+z x,x^{q^2}+\beta x^{q^3}+\beta x): x \in \mathbb{F}_{q^4} \}\subseteq \PG(1,q^4)$$
is scattered. 

Let us consider $z\in \Delta_0\neq \emptyset$. First we prove that the point $(1:0)\in \PG(1,q^4)$ is of weight $0$.  This can be readily seen as the Dickson matrix of $x^{q^2}+\beta x^{q^3}+\beta x$ is non-singular. 

We consider now a point $(m:1)\in \PG(1,q^4)$ with $m\in \mathbb{F}_{q^4}$.  Such a point is of weight at most one if and only if the rank of 
 $$M:=
\begin{pmatrix}
m \beta+z&1&m&1+m \beta\\
	1+mq \beta&m^q \beta+z&1&m^q\\
	m^{q^2}&1+m^{q^2} \beta&m^{q^2} \beta+z&1\\
	1&m^{q^3}&1+m^{q^3} \beta&m^{q^3} \beta+z\\
\end{pmatrix}
$$
is at least three.
Let $f_1=\det(N_1)$ and $f_2=\det(N_2)$ be the determinants of the north-right and south-right $3\times 3$ submatrix of $M$, respectively.

Then 
\begin{eqnarray*}
f_1 &:=& \beta^3 m^{q^3+q^2+1} + (z\beta + \beta^2)m^{q^2+1} + (z\beta + \beta^2 + \beta + 1)m^{q^3+1}\\
&&+ (z + 
    \beta)m + \beta m^{q^3+q^2} + (z\beta + \beta)m^{q^2} + (z\beta +z)m^{q^3} + z^2;\\
f_2 &:=& (\beta^3 + \beta^2 + 1)m^{q^3+q^2+1} + (\beta^2 + \beta)m^{q^2+1} + (z\beta + \beta^2)m^{q^3+1} \\
&&+
    (\beta + 1)m + (z\beta^2 + z\beta + \beta)m^{q^3+q^2} + (z\beta + \beta + 1)m^{q^2} + (z^2 +
    z\beta)m^{q^3}.\\
\end{eqnarray*}

Using $f_1=0$ and $(f_1)^{q^3}=0$ to eliminate $m^{q^3}$ and $m^{q^2}$ from $(f_2)^{q^3}=0$, one gets

$$((\beta^2 + \beta) m^{q+1} + \beta m + (z+\beta)m^q + 1)((z\beta^3 + \beta + 1)m^{q+1} +(z^2\beta + z)m^q  +(z^2\beta + z)m +   z^2\beta + z^2)=0.$$

\begin{itemize}
    \item Suppose that $(\beta^2 + \beta) m^{q+1} + \beta m + (z+\beta)m^q + 1=0$. %If $\beta=1$ then $(z+1)m^q+ m + 1=0$, which has no roots in $\mathbb{F}_{q^4}$ since $N_{q^4/q}(1/(z+1)=1/(z^4+1)\neq 1$. In this case the weight of $(m:1)$ is at most $1$. If $\beta\neq 1$ 
    Then $(m \beta^2 + m \beta +z + \beta)m^q= m\beta +   1$. So $m\neq 0$. Also, $m \beta^2 + m \beta +z + \beta=0$ would imply $m\beta +   1=0$ and thus $z=1$, a contradiction. Since $m\in \mathbb{F}_{q^4}$ this yields $z^3(m^2\beta^2 + m^2\beta + m z + 1)=0$. Using $m^q= \frac{m\beta +  + 1}{m \beta^2 + m \beta +z + \beta}$ again in $(f_2)^{q^3}=0$ one gets
$$(m\beta + z)(m\beta^2 + m\beta + m + z + \beta)(m z\beta^2 + mz\beta + m\beta + z^2 + \beta + 1)=0.$$

Since $z\notin  \mathbb{F}_4 \cup \{\beta\}$, none of these three factors vanishes. Also, combining each of them with  $m^2\beta^2 + m^2\beta + m z + 1=0$, necessarily 
$$\beta z(z+1)=0,$$
or $$z(z^2 + z\beta^3 + z\beta^2 + z\beta + \beta^3 + \beta^2 + 1)=0,$$
or $$\beta z(z+1)(z^2 + z\beta^3 + z\beta^2 + z\beta + \beta^3 + \beta^2 + 1)=0,$$
a contradiction to $z\in \Delta_0$.
\item Suppose that $(z\beta^3 + \beta + 1)m^{q+1} +(z^2\beta + z)m^q  +(z^2\beta + z)m +   z^2\beta + z^2=0$. Since $z\in \Delta_0$, $z\beta^3 + \beta + 1\neq 1$. Then $(m z\beta^3 + m\beta + m + z^2\beta + z)m^q= m z^2\beta + m z  + z^2\beta + z^2$. Clearly $m=0$ is not possible. Also $m z\beta^3 + m\beta + m + z^2\beta + z=0$ yields $m z^2\beta + m z  + z^2\beta + z^2=0$ and thus $z \beta (z^2 + z\beta^2 + z\beta + 1)=0$, a contradiction to $z \in \Delta_0$. From $m^q= \frac{m z^2\beta + m z  + z^2\beta + z^2}{m z\beta^3 + m\beta + m + z^2\beta + z}$, substituting it in $\det(M)=0$ one gets
$$\beta z ((z\beta^3 + z\beta^2 + z + \beta)m^2 + z^2\beta^2 m+ z^3 + z^2\beta)=0.$$
Note that $(z\beta^3 + z\beta^2 + z + \beta)m^2 + z^2\beta^2 m+ z^3 + z^2\beta=0$ is an equation in $m$ defined over $\mathbb{F}_q$. Since ${\rm Tr}_{q/2}\left(\frac{(z\beta^3 + z\beta^2 + z + \beta)(z+\beta)}{z^2\beta^4}\right)=0$, $m\in \mathbb{F}_q$ and thus 
    $$\frac{m z^2\beta + m z  + z^2\beta + z^2}{m z\beta^3 + m\beta + m + z^2\beta + z}=m.$$
Together with $(z\beta^3 + z\beta^2 + z + \beta)m^2 + z^2\beta^2 m+ z^3 + z^2\beta=0$, this gives
    $$\beta z (z^2 + z\beta^2 + z\beta + 1)=0,$$
    a contradiction to $z \in \Delta_0$.
\end{itemize}
\end{proof}

Consider the following polynomials
\begin{eqnarray*}
h_1(z) &:=&\beta^{q+1}z(z+1)(z^2 + (\beta +\beta^{q^3})z + \beta + \beta^{q^3});\\
h_2 (z) &:=& z^5 + (\beta^{q^2+q+1} + \beta^{q+1} + \beta + \beta^{q^2} + 
        \beta^{q^3})z^4 + (\beta^{q^2+q+2} + \beta^{q+2} + 
        \beta^{q^2+2q+1} + \beta^{2q+1}\\&& + \beta^{q^3+q+1}+ 
        \beta^{q^2+1} + \beta + \beta^{q^2}+ \beta^{q^2+q} + \beta^q 
        + \beta^{q^2} + \beta^{q^3} + 1)z^3\\&& + ( \beta^{q^2+q+2} + 
        \beta^{q^3+2q+1}+ \beta^{2q+1}+ \beta^{q+1}+ 
        \beta^{q^3+2q} + \beta^{2q}+ \beta^{q^2+q}+ \beta^{q^3+q}+
        \beta^q + \beta^{q^2})z^2\\&& + ( \beta^{q^2+q+2} + \beta^{q+2}+ 
         \beta^{q^2+2q+1} +  \beta^{q^3+q+1} +  \beta^{q^2+1} + 
        \beta^{q^3+q}+ \beta^q + \beta^{q^2})z\\&& +  \beta^{q^2+q+2} + 
        \beta^{q^3+2q+1}+ \beta^{q^2+q+1}+ \beta^{q^3+2q};\\
h_3(z) &:=&\beta (z+1)
    (z^5 + (\beta +  \beta^{q^3+q^2+q} + \beta^{q^2+q}+ \beta^q + 
        \beta^{q^3})z^4 + ( \beta^{q^2+q+1}+ \beta + 
         \beta^{q^3+q^2+2q}\\&& +  \beta^{q^2+2q} + 
         \beta^{q^3+2q^2+q} +  \beta^{2q^2+q} + \beta^{q^3+q}+ 
        \beta^q + \beta^{2q^2} + \beta^{q^3+q^2} + \beta^{q^2} + \beta^{q^3} +
        1)z^3\\&& + ( \beta^{q^2+q+1} +  \beta^{2q^2+1} +  \beta^{q^2+1} + 
         \beta^{q^3+q^2+2q} +  \beta^{2q^2+q} + \beta^{q^2+q}+ 
        \beta^{2q^2} + \beta^{q^3+q^2} + \beta^{q^2} + \beta^{q^3})z^2\\&& + 
        ( \beta^{q^2+q+1}+  \beta^{q^2+1} +  \beta^{q^3+q^2+2q} + 
         \beta^{q^2+2q} +  \beta^{q^3+2q^2+q} + \beta^{q^3+q}+ \beta^{q^2} +
        \beta^{q^3})z\\&& +  \beta^{q^2+q+1} +  \beta^{2q^2+1} +  \beta^{q^3+q^2+2q} +
         \beta^{q^3+q^2+q}\\
h_4(z) &:=&{\rm Tr}_{q^4/q}(\beta+\beta^{q+1})z^3 + ({\rm Tr}_{q^4/q}(\beta^{q^+q+1})  + \beta^{q^2+1} +\beta^{q^3+q})z^2 + {\rm Tr}_{q^4/q}(\beta)z + {\rm Tr}_{q^4/q}(\beta^{q+1}).
\end{eqnarray*}

Let $$\Delta_1:= \{z \in \mathbb{F}_q : z(z+1)(
    z\beta^{q^2+1} + \beta + \beta^{q^2})(z^2 + z\beta^{q^2+q} + z\beta^{q^2} + \beta^q + \beta^{q^2} + 1)h_1(z)h_2(z)hg_3(z)h_4(z)\neq 0\}.$$

\begin{proof}[Proof of Lemma \ref{lem:second-case}]
The proof is similar to the one of Lemma \ref{Prop:alpha_1 beta_Fq}. First note that since $\beta^{q^3+q}=\beta^{q^2+q}$, $\beta\notin \mathbb{F}_q$ yields $\beta\notin \mathbb{F}_{q^2}$. We will prove that for each such $\beta$ there exists $z\in \Delta_1$ such that the point $(1: z:\beta)$ is not $1$-saturated. This holds if the set 
$$\{(x^q+x^{q^3}+z x,x^{q^2}+\beta x^{q^3}+\beta x): x \in \mathbb{F}_{q^4} \}\subseteq \PG(1,q^4)$$
is scattered. Arguing as in the proof of Lemma \ref{Prop:alpha_1 beta_Fq}, it can be shown that the point $(0:1)$ is of weight $1$. To investigate the weight of a point $(1:m)$, $m\in \mathbb{F}_{q^4}$, we consider the rank of 
 $$M:=
\begin{pmatrix}
m \beta+z&1&m&1+m \beta\\
	1+m^q \beta^q&m^q \beta^q+z^q&1&m^q\\
	m^{q^2}&1+m^{q^2} \beta^{q^2}&m^{q^2} \beta^{q^2}+z^{q^2}&1\\
	1&m^{q^3}&1+m^{q^3} \beta^{q^3}&m^{q^3} \beta^{q^3}+z^{q^3}\\
\end{pmatrix}.
$$
We will prove that for each choice of $\beta$ the rank of $M$ is at least three independently of $m\in \mathbb{F}_{q^4}$. 
Let $f_1=\det(N_1)$ and $f_2=\det(N_2)$ be the determinants of the north-right and south-right $3\times 3$ submatrix of $M$, respectively. In this case, 
\begin{eqnarray*}
f_1(z) &:=&(\beta^{q^3+q^2+q}+ \beta^{q^2+q} + \beta^{q^3+q})m^{q^3+q^2+q} +  (z\beta^q + 
    \beta^{q^3+q})m^{q^3+q}+ (z\beta^{q^2}+ z)m^{q^2}\\
    && + \beta^q m^{q^2+q}+(z\beta^{q^3} + 
    \beta^{q^3+q^2} + \beta^{q^2} + 1)m^{q^3+q^2} +  (z\beta^q + \beta^q)m^q + 
    (z + \beta^{q^3})m^{q^3} + z^2;\\
f_2(z) &:=&
(\beta^{q^3+q^2+q} + \beta^{q^3+q} + 1)m^{q^3+q^2+q} + 
    (z\beta^{q^2+q} + z\beta^q + \beta^{q^2})m^{q^2+q} + (\beta^{q^3+q}
    + \beta^q)m^{q^3+q}\\&& + (z\beta^q + \beta^q + 1)m^q + (z\beta^{q^3} + 
    \beta^{q^3+q^2})m^{q^3+q^2} + (z^2+z\beta^{q^2})m^{q^2} +  (\beta^{q^3} + 1)m^{q^3}.
\end{eqnarray*}
Using $f_1=0$ and $(f_1)^{q^3}=0$ to eliminate $m^{q^3}$ and $m^{q^2}$ from $(f_2)^{q^3}=0$, one gets

$$(\beta^{q+1} + \beta)m^{q+1} + \beta m + (z + \beta^q)m^q + 1=0$$
or
\begin{eqnarray*}
&&(z\beta^{q^2+q+1} + z\beta^{q+1} + z\beta^{q^2+1} + 
\beta^{q^2+1} + \beta^{q^2+q}+ \beta^q + 1)m^{q+1}\\
&&+ (z^2\beta + z + \beta + \beta^{q^2})m + (z^2\beta^{q^2} + z\beta^q + z\beta^{q^2}+ 
        z)m^q + z^2\beta^{q^2}+ z^2=0.
        \end{eqnarray*}

\begin{itemize}
    \item Suppose that   $(\beta^{q+1} + \beta)m^{q+1} + \beta m + (z + \beta^q)m^q + 1=0$. Then $(m \beta^{q+1} + m \beta +z + \beta^q)m^q= m\beta +   1$. Clearly $m\neq 0$. Also, $m \beta^{q+1} + m \beta +z + \beta^q=0$ would imply $m\beta +   1=0$ and thus $z=1$, a contradiction. Since $m\in \mathbb{F}_{q^4}$, this yields 
    \begin{eqnarray}
        &&\Big((\beta^{q+1} + \beta)z^3 +(\beta^{q+2} + \beta^2 + 
    \beta^{q^3+q+1} + \beta^{q+1} + \beta^{q^2+1} + 
    \beta^{q^3+1})z^2\nonumber\\
    && + (\beta^{q+2} + \beta^{q^2+2} + 
    \beta^{q^2+1} + \beta^{q^3+1})z+ \beta^{q^2+2} + 
    \beta^{q^3+q+1}\Big)m^2\nonumber\\
    && + \Big(z^4 + {\rm Tr}_{q^4/q}(\beta)z^3 + ({\rm Tr}_{q^4/q}(\beta)+\beta^{q^2+1}+\beta^{q^3+q}) z^2 + \beta^{q^2+1}+\beta^{q^3+q}\Big)m\nonumber\\
    && +
    z^3 + z^2\beta^q + z^2\beta^{q^2} + z\beta^q + z\beta^{q^2}=0.\label{Eq:SecondoGrado}
    \end{eqnarray}
    Using $m^q= \frac{m\beta   + 1}{m \beta^{q+1} + m \beta +z + \beta^q}$ again in $(f_2)^{q^3}=0$ one gets
$$(m\beta + z)(m\beta^{q+1} + m\beta + m + z + \beta^q)(m z\beta^{q+1} + mz\beta + m\beta^{q+1} + m\beta^{q^2+1} + m\beta + z^2 + 
        z\beta^q + z\beta^{q^2} + \beta^q + 1)=0.$$

Since $\beta \notin \mathbb{F}_q $, none of these three factors vanishes. Also, combining each of them with  \eqref{Eq:SecondoGrado}, necessarily 
$$h_1(z)h_2(z)h_3(z)=0,$$
a contradiction to $z\in \Delta_1$.

\item Suppose that 
\begin{eqnarray*}
&&(z\beta^{q^2+q+1} + z\beta^{q+1} + z\beta^{q^2+1} + 
\beta^{q^2+1} + \beta^{q^2+q}+ \beta^q + 1)m^{q+1}\\
&&+ (z^2\beta + z + \beta + \beta^{q^2})m + (z^2\beta^{q^2} + z\beta^q + z\beta^{q^2}+ 
        z)m^q + z^2\beta^{q^2}+ z^2=0.
        \end{eqnarray*} 
        Then $((z\beta^{q^2+q+1} + z\beta^{q+1} + z\beta^{q^2+1} + 
\beta^{q^2+1} + \beta^{q^2+q}+ \beta^q + 1)m+(z^2\beta^{q^2} + z\beta^q + z\beta^{q^2}+ 
        z))m^q=  (z^2\beta + z + \beta + \beta^{q^2})m+z^2\beta^{q^2}+ z^2$. Clearly $m=0$ is not possible. 
        
        Also $(z\beta^{q^2+q+1} + z\beta^{q+1} + z\beta^{q^2+1} + 
\beta^{q^2+1} + \beta^{q^2+q}+ \beta^q + 1)m+(z^2\beta^{q^2} + z\beta^q + z\beta^{q^2}+ 
        z)=0$ yields $(z^2\beta + z + \beta + \beta^{q^2})m+z^2\beta^{q^2}+ z^2=0$ and thus $z(
    z\beta^{q^2+1} + \beta + \beta^{q^2})(z^2 + z\beta^{q^2+q} + z\beta^{q^2} + \beta^q + \beta^{q^2} + 1)=0$, a contradiction to $z\in \Delta_1$.

        Thus 
    $$m^q =\frac{(z^2\beta + z + \beta + \beta^{q^2})m+z^2\beta^{q^2}+ z^2}{(z\beta^{q^2+q+1} + z\beta^{q+1} + z\beta^{q^2+1} + 
\beta^{q^2+1} + \beta^{q^2+q}+ \beta^q + 1)m+(z^2\beta^{q^2} + z\beta^q + z\beta^{q^2}+ 
        z)}$$
        and since $m\in \mathbb{F}_{q^4}$, necessarily $ h_4(z)(a_2(z)m^2+a_1(z)m+a_0(z))=0$, where
        \begin{eqnarray*}
        a_2(z) &:=& (\beta^{q^3+q^2+q+2}+ \beta^{q^3+q^3+2} + \beta^{q+1})z^3+(\beta^{q^2+q+2} + \beta^{q^3+q^2+2} + \beta^{q^2+2} + \beta^2\\
        &&+ \beta^{q^2+q+1} + 
        \beta^{q+1} +\beta^{q^3+q^2+1}+ \beta^{q^2+1} + \beta^{q^3+1} + \beta + \beta^q)z^2\\
        &&+(\beta^{q^3+q+2} + \beta^{q^2+2} + \beta^{q^3+q^2+q+1} + \beta^{q^2+q+1} + \beta^{q+1}+ \beta^{q^3+q^2+1}+ \beta^{q^2+1}\\
        &&+ \beta^{q^3+q^2+q}+ \beta^q + \beta^{q^3})z+\beta^{q+2} + \beta^2 + \beta^{q^2+q+1} + \beta^{q^3+q+1}+ \beta^{q^2+1} + \\
        &&\beta + \beta^{q^3+q^2+q} + \beta^{q^2};\\
         a_1(z) &:=&\beta^{q^3+q^2+q+1}z^4 + {\rm Tr}_{q^4/q}(\beta+\beta^{q+1}+\beta^{q^2+q+1}) z^3\\
         &&+       ({\rm Tr}_{q^4/q}(\beta^{q+1}+\beta^{q^2+q+1})+\beta^{q^2+1}+\beta^{q^3+q})z^2 + {\rm Tr}_{q^4/q}(\beta)z + {\rm Tr}_{q^4/q}(\beta^{q+1});\\
    a_0(z) &:=& z^2
    \Big(\beta^{q^3+q^2}z^3 + (\beta^{q^3+q^2+q} + \beta^{q^2} + \beta^{q^3})z^2 + 
        (\beta^{q^3+q}+ \beta^q + \beta^{q^3+q^2} + \beta^{q^3})z\\
        &&+ \beta^{q^2+q} + 
        \beta^q + \beta^{q^3+q^2}+ \beta^{q^3}\Big)
        \end{eqnarray*}
Since $\beta\notin \mathbb{F}_{q}$ (and thus not belonging to $\mathbb{F}_{q^2}$) the polynomial $a_2(z)m^2+a_1(z)m+a_0(z)$ is nonvanishing, since $a_2(z)\not\equiv 0$. Also,the resultant between $a_2(z)m^2+a_1(z)m+a_0(z)$ and $f_2$ with respect to $m$ is a polynomial of degree $38$ in $z$ whose leading coefficient is   $\beta^{q^3+3q^2+2q^2+4}(\beta^{q}+\beta^{q^2}+\beta^{q^2+q}+\beta^{q^3+q^2})$, which is never zero, since $\beta\notin \mathbb{F}_q$.
Since $q\geq 64$ it is always possible to choose $z\in \Delta_1$ to be not a root of such a polynomial. 
\end{itemize}
The claim follows.
\end{proof}

\end{document}